\newtheorem{theorem}{Theorem}[section]
\newtheorem{hypo}{Hypothesis}
\newtheorem{prop}[hypo]{Proposition}
\newtheorem{lem}[hypo]{Lemma}
\def\A{\mathcal{A}}
\def\PP{\mathbb{P}}
\def\RR{\mathbb{R}}
\def\ZZ{\mathbb{Z}}
\def\EE{\mathbb{E}}
\def\NN{\mathbb{N}}
\newcommand {\sur}[2] { \stackrel {\scriptstyle{#1}}{#2}}
\begin{document}

 \vglue50pt

\centerline{ \Large \bf  The quenched limiting distributions}
\centerline{ \Large \bf  of a charged-polymer model}

\bigskip

\medskip
 \renewcommand{\thefootnote}{\fnsymbol{footnote}}

 \centerline{Nadine Guillotin-Plantard\footnotemark[1], Renato Soares dos Santos\footnotemark[1]}

\footnotetext[1]{Institut Camille Jordan, CNRS UMR 5208, Universit\'e de Lyon, Universit\'e Lyon 1, 43, Boulevard du 11 novembre 1918, 69622 Villeurbanne, France.\\ E-mail: nadine.guillotin@univ-lyon1.fr ; soares@math.univ-lyon1.fr;\\ This work was supported by the french ANR project MEMEMO2 10--BLAN--0125--03.}
\medskip

 \centerline{\it Universit\'e Lyon 1}

\bigskip
{\leftskip=2truecm
\rightskip=2truecm
\baselineskip=15pt
\small

\noindent{\slshape\bfseries Summary.}       
The limit distributions of the charged-polymer Hamiltonian of Kantor and Kardar [Bernoulli case] and Derrida, Griffiths and Higgs [Gaussian case] are considered. 
Two sources of randomness enter in the definition: a random field $q= (q_i)_{i\geq 1}$ of i.i.d.\ random variables, which is called the random \emph{charges}, and a random walk $S = (S_n)_{n \in \NN}$ evolving in $\ZZ^d$, independent of the charges. The energy or Hamiltonian  $K = (K_n)_{n \geq 2}$ is then defined as 
$$K_n := \sum_{1\leq i<j\leq n} q_i q_j {\bf 1}_{\{S_i=S_j\}}.$$
The law of $K$ under the joint law of $q$ and $S$ is called ``annealed'', and the conditional law given $q$ is called ``quenched''. 
Recently, strong approximations under the annealed law were proved for $K$. In this paper we consider the limit distributions of $K$ under the quenched law.

\medskip

 \noindent{\slshape\bfseries Keywords}: Random walk, polymer model, self-intersection local time, limit theorems, law of the iterated logarithm, martingale. \\
\noindent{\slshape\bfseries  2011 Mathematics Subject Classification}: 60G50, 60K35, 60F05. 


} 

\medskip
 
\section{Introduction}
Let $d\geq 1$ and $q= (q_i)_{i \geq 1}$ be a collection of i.i.d.\ real random variables, hereafter referred to as {\it charges}, and $S = (S_n)_{n \ge 0}$ be a random walk in $\mathbb{Z}^d$ starting at $0$,
i.e., $S_0 = 0$ and
$\left(S_n-S_{n-1}\right)_{n \ge 1}$
is a sequence of i.i.d.\ $ \mathbb{Z}^d $-valued random variables, independent of $q$.
We are interested in the limit distributions of the sequence $K := (K_n)_{n \ge 1}$ defined 
by setting $K_1 := 0$ and, for $n \geq 2$,
\begin{equation}
K_n := \sum_{1\leq i<j\leq n}  q_{i} q_j {\bf 1}_{\{S_i=S_j\}}.
\end{equation}
In the physics literature this sum is known as the Hamiltonian of the so-called {\it charged polymer model }; see Kantor and Kardar \cite{KK} in
the case of Bernoulli random charges and Derrida, Griffiths and Higgs \cite{DGH} in the Gaussian case.
This model has been largely studied by physicists since it is believed that a protein molecule looks like a random walk with random charges attached at the vertices of the walk; these charges are interacting
through local interactions mimicking chemical reactions \cite{PM}. 

Results were first established under the {\it annealed} measure, that is when one averages at the same time over the charges and the random walk.
Chen \cite{Ch08}, Chen and Khoshnevisan \cite{CK} proved that the one-dimensional limiting distributions are closely related to the model of {\it Random walk in random scenery}. 
Hu and Khoshnevisan \cite{HK} then established that in dimension one the limit process of the (correctly renormalized) Hamiltonian $K_n$ is strongly approximated by a Brownian motion, time-changed
 by the self-intersection local time process of an independent  Brownian motion.
Especially, it differs from the so-called Kesten and Spitzer's process \cite{KS79} obtained as the continuous limit process of the one-dimensional random walk in random scenery.

To our knowledge, distributional limit theorems for {\it quenched} charges (that is, conditionally given the charges) are not known. Let us note that in the physicists' usual setting the charges
are usually quenched: a typical realization of the charges is fixed, and the average is over the
walk. In the case of dimension one, we determine  the  quenched  weak limits of $K_n$ by applying  Strassen's \cite{Strassen} functional law of the iterated logarithm. As a consequence, conditionally on the random charges, the Hamiltonian $K_n$ does not converge in law. In contrast with the one-dimensional setting, we show that the quenched central limit theorem holds for random walks in dimensions $d\ge2$ with finite non-singular covariance matrix and centered, reduced charges. In $d \ge 3$ we obtain convergence to Brownian motion under the standard scaling $\sqrt{n}$, while in $d=2$ we are only able to show convergence of the finite-dimensional distributions under the unusual $\sqrt{n\log n}$-scaling and an additional condition on the charges, namely a moment of order strictly larger than $2$. We also provide in the appendix a proof of a functional central limit theorem under the conditional law given the random walk. In particular, our results imply annealed functional central limit theorems under weaker assumptions on the charges than the ones in \cite{CK,HK}.

\section{Case of dimension one}

\subsection{Results}

In this section we study the case of the dimension one,  $S=(S_k, k\geq 0) $ is the simple one-dimensional random walk. 
Moreover we assume that
$$\EE(q_1)=0, \  \EE( q_1^2)=1 \ \text{and}\   \EE( |q_1|^{6}) <\infty.$$ 
We prove that under these assumptions, there is no quenched distributional limit theorem for $K$. In the sequel, for $0 < b \le \infty$, we will denote by $\mathcal{AC}([0,b]\to {\mathbb R})$ the set of absolutely continuous functions defined on the interval $[0,b]$ with values in $\mathbb{R}$. Recall that  if $f\in\mathcal{AC}([0,b]\to {\mathbb R}) $, then the derivative of $f$ (denoted by $\dot{f} $) exists  almost everywhere and is  Lebesgue integrable  on
 $[0,b]$. Define  \begin{equation} \mathcal{K}^*  := \Big\{ f\in \mathcal{AC}( \RR_+ \to \RR) :  f(0)=0, \int_{0}^{\infty} (\dot{f}(x))^2 dx  \le 1\Big\}. \label{K*}  \end{equation}

\begin{theorem}\label{T:0} 
For $\mathbb{P}$-a.e.\ $q$, under the quenched probability $\mathbb{P} \left(. \mid q \right)$, the process 
 $$\tilde{K}_n:= \frac{K_n} {(n^{3/2} \log \log n)^{1/2}},  \qquad n >e^e, $$ 
does not converge in law. 
More precisely, for $\mathbb{P}$-a.e.\ $q$, under the quenched probability $\mathbb{P} \left(. \mid q \right)$, 
the limit points of the  law of 
$\tilde{K}_n,$ as $n \to \infty,  $
 under the topology of  weak convergence of measures,  are  equal to the set of the laws of random variables  in $\Theta_B $,  with  
 \begin{equation} \Theta_B  :=  \Big\{  f(V_1) :    f \in  {\mathcal K}^*  \Big\},  \label{KB} \end{equation} 
 where     $V_1$ denotes the self-intersection local time at time $1$ of a one-dimensional Brownian motion $B$ starting from $0$.
 
The set $\Theta_B$ is closed for the topology of  weak convergence of measures, and is a compact subset of $L^2 ( (B_t)_{t\in[0,1]})$.
 
\end{theorem}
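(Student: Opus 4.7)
The plan is to reduce the quenched limit-point analysis of $\tilde{K}_n$ to Strassen's functional law of the iterated logarithm (LIL), the bridge being a strong Brownian approximation whose clock is the self-intersection local time $V_n := \sum_{1 \le i < j \le n} \ind_{\{S_i=S_j\}}$ of the walk. The starting point is the martingale decomposition $K_n=\sum_{j=2}^n q_j W_j$, where $W_j := \sum_{i<j} q_i \ind_{\{S_i=S_j\}}$; under the joint law of $(q,S)$ the sequence $(q_j W_j)_{j \geq 2}$ is a martingale difference sequence in the filtration $\sigma(q_1,\dots,q_j,S_0,\dots,S_j)$, whose optional quadratic variation $\sum_j q_j^2 W_j^2$ is closely related to $V_n$. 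Applying a Skorokhod embedding to this martingale and using $\EE|q_1|^6<\infty$ to control higher moments of the increments, I would establish, for $\PP$-a.e.\ $q$ and under the quenched measure, a strong approximation of the form
\[
K_n \;=\; \beta(V_n) + o\!\left(\sqrt{n^{3/2}\log\log n}\right) \qquad \text{in probability,}
\]
where $\beta$ is a standard Brownian motion (living on an enriched probability space) which can be taken independent of $S$.

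Once the approximation is in place, the two classical ingredients are combined. First, Strassen's functional LIL applied to $\beta$: for a.e.\ realization of $\beta$, the family of rescaled paths
\[
F_n(s):=\frac{\beta(n^{3/2} s)}{\sqrt{2\, n^{3/2} \log\log n^{3/2}}},\qquad s\ge 0,
\]
is relatively compact in $C(\RR_+,\RR)$ (topology of uniform convergence on compacts) with cluster set exactly $\mathcal K^*$. Second, Donsker's invariance principle and the continuous mapping theorem yield the quenched weak convergence $V_n/n^{3/2} \to V_1$ in distribution. Given any subsequence $n_k$, extract a sub-subsequence along which $F_{n_k}\to f$ in $C_{\rm loc}(\RR_+)$ for some $f\in \mathcal K^*$; tightness of $V_{n_k}/n_k^{3/2}$ and continuous mapping applied to the composition then give $F_{n_k}(V_{n_k}/n_k^{3/2}) \to f(V_1)$ in distribution, and hence, after accounting for the absolute multiplicative factor arising when comparing $\sqrt{n^{3/2}\log\log n}$ with $\sqrt{2\,n^{3/2}\log\log n^{3/2}}$ and absorbing it into the element of $\mathcal K^*$, $\tilde K_{n_k} \to \tilde f(V_1)$ in law for an appropriate $\tilde f\in\mathcal K^*$. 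Conversely, the achievability part of Strassen's theorem ensures every $f\in \mathcal K^*$ appears as a cluster point of $(F_n)$, so every element of $\Theta_B$ is attained.

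For the topological claims about $\Theta_B$: functions $f\in \mathcal K^*$ satisfy $|f(t)-f(s)|\le \sqrt{|t-s|}$ by Cauchy--Schwarz, so $\mathcal K^*$ is equicontinuous and uniformly bounded on compact subsets of $\RR_+$; combined with the weak $L^2$-closedness of the unit ball of the associated Cameron--Martin space, Arzel\`a--Ascoli yields compactness of $\mathcal K^*$ in $C_{\rm loc}(\RR_+)$. The evaluation map $f\mapsto f(V_1)$ is then continuous from $\mathcal K^*$ into $L^2((B_t)_{t\in[0,1]})$ by dominated convergence, using $|f(V_1)|\le\sqrt{V_1}$ and $\EE V_1<\infty$. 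Consequently $\Theta_B$ is compact in $L^2((B_t)_{t\in[0,1]})$, and a fortiori closed for weak convergence of probability measures on $\RR$.

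The main technical obstacle is the quenched strong approximation in the first step. The annealed counterpart is due to Hu--Khoshnevisan; the quenched refinement needed here, with errors quantitatively small on the LIL scale $\sqrt{n^{3/2}\log\log n}$ for $\PP$-a.e.\ realization of the charges, requires delicate control of the martingale bracket $\sum_j q_j^2 W_j^2 - V_n$ in a quenched sense, and this is where the sixth-moment assumption on $q_1$ is used; it is the technical heart of the argument.
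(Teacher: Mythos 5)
Your overall architecture — strong approximation of $K_n$ by a Brownian motion time-changed by a self-intersection clock, Strassen's functional LIL for that Brownian motion, and a composition/tightness argument to identify the cluster set with $\Theta_B$ — is the same as the paper's, and your treatment of the identification step and of the topological claims about $\Theta_B$ (Arzel\`a--Ascoli plus weak lower semicontinuity of $\int\dot f^2$, versus the paper's weak-$L^2$ compactness of the derivatives) is an acceptable variant of what the paper does. The problem is the first step, which you correctly flag as the technical heart and then leave entirely unproven. The paper does not re-derive the approximation: it cites Hu--Khoshnevisan, who construct a coupling of $q,S,B,W$ in which $(q,W)$ is \emph{independent of} $(S,B)$ and $K_n=2^{-1/2}W(V_n)+o(n^{3/4-\varepsilon})$ \emph{almost surely}. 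Both features are essential. The a.s.\ error bound $o(n^{3/4-\varepsilon})$ already beats the LIL scale $(n^{3/2}\log\log n)^{1/2}$, so no ``quenched refinement'' of the error is needed at all — an annealed a.s.\ statement is automatically a.s.\ under $\PP(\cdot\mid q)$ for a.e.\ $q$. You have thus identified a non-existent obstacle (quenched control of the bracket) while glossing over the real one: the independence structure of the coupling. Your $\beta$ comes from a Skorokhod embedding of the martingale $K_n$, which is a martingale only under the joint law (under $\PP(\cdot\mid q)$ it is not even a martingale, since $\EE[q_jW_j\mid S_1,\dots,S_{j-1},q]\neq0$), so $\beta$ is a priori a functional of both $q$ and $S$. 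If $\beta$ is not jointly coupled with $q$ so that $(q,\beta)\perp(S,B)$, then under $\PP(\cdot\mid q)$ the path of $\beta$ is still random and correlated with the clock, and you cannot fix a realization of $\beta$, invoke Strassen's deterministic cluster set, and keep $V_1$ fully random. The sentence ``which can be taken independent of $S$'' is precisely the content of the Hu--Khoshnevisan theorem and cannot be asserted; either cite it (as the paper does) or supply its proof.

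Two smaller points. First, your constant bookkeeping is wrong as stated: a multiplicative factor such as $\sqrt2$ cannot be ``absorbed into the element of $\mathcal K^*$'', since $\mathcal K^*$ is not scale-invariant ($c f\notin\mathcal K^*$ for $c>1$). The constants do in fact match, but only because the factor $2^{-1/2}$ in the Hu--Khoshnevisan approximation (equivalently, the factor $\tfrac12$ relating the discrete clock to $n^{3/2}V_1$) exactly cancels the $\sqrt2$ in Strassen's normalization; this needs to be checked, not waved away. Second, once the approximation is quoted, one conditions on the pair $(q,W)$; passing from $\PP(\cdot\mid q,W)$ back to $\PP(\cdot\mid q)$ uses that $K_n$ is $\sigma(q,S)$-measurable and $S\perp W$ given $q$ in the coupling — another place where the independence you asserted is doing real work.
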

  
Instead of Theorem \ref{T:0}, we shall prove that there is no quenched limit theorem for the continuous analogue of $K$ introduced by Hu and Khoshnevisan \cite{HK} and deduce  Theorem \ref{T:0}  by using a strong approximation. Let us define this continuous analogue: Assume that  $B:=(B(t))_{t\geq 0}$, $W:=(W(t))_{t\geq 0}$ are  two real Brownian motions starting from $0$,  defined on the same probability space and independent of each other. We denote by $\mathbb{P}_{B}$, $\mathbb{P}_{W}$ the law of these processes.  We will also denote by $(L_t(x))_{t\geq 0,x\in\RR}$ a continuous version with compact support of the local time of the process $B$, and $(V_t)_{t\geq 0}$ its self-intersection local time up to time $t$, that is
$$V_t:=\int_{\RR} L_t(x)^2 \, dx.$$  
We define the continuous version of the sequence $K_n$ as
$$Z_t:= W( V_t ), t\geq 0.$$
In dimension one, under the annealed measure,  Hu and Khoshnevisan \cite{HK} proved that the process 
$(n^{-3/4} K([nt]) )_{t\geq 0}$  weakly converges in the space of continuous functions to the continuous process $Z= (2^{-1/2} Z_t)_{t\geq 0}$. 
They gave a stronger version of this result, more precisely, they proved that there is a coupling of $q$, $S$, $B$
 and $W$ such that $(q, W)$ is independent of $(S,B)$ and for any $\varepsilon \in (0,1/24)$, almost surely,
\begin{equation}\label{Zh}
K_n = 2^{-1/2} Z_n + o(n ^{\frac{3}{4} - \varepsilon} ) , \  \  n\rightarrow +\infty.
\end{equation}
Theorem \ref{T:0} will follow from this strong approximation and the following result.

\begin{theorem}\label{T:1} $\PP_W$-almost surely,  under the quenched probability $\PP(\cdot \vert W)$,  the limit points of the  law of 
$$\tilde{Z}_t:= \frac{Z_t} {(2 t^{3/2} \log \log t )^{1/2}},  \qquad t \to \infty,  $$
 under the topology of  weak convergence of measures,  are  equal to the set of the laws of random variables  in $\Theta_B $ defined in Theorem \ref{T:0}. Consequently, under $\PP(\cdot \vert W)$, as $t \to \infty$,   $\tilde{Z}_t$ does not converge in law. 
 
 \end{theorem}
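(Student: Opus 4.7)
The plan is to exploit Brownian scaling to reduce $\tilde Z_t$ to the evaluation of a $\sigma(W)$-measurable random function at an independent random point, and then apply Strassen's functional law of the iterated logarithm to $W$. Brownian scaling gives $(B_{ts}/\sqrt t)_{s\geq 0}\law(B_s)_{s\geq 0}$; combining this with the occupation-time formula yields $L_{ts}(\sqrt t\, y)\law\sqrt t\, L_s(y)$ and hence $(V_{ts}/t^{3/2})_{s\geq 0}\law(V_s)_{s\geq 0}$. Setting
\[
g_t(x):=\frac{W(t^{3/2}x)}{\sqrt{2 t^{3/2}\log\log t}},\qquad U_t:=\frac{V_t}{t^{3/2}},
\]
one gets $\tilde Z_t=g_t(U_t)$, where $g_t$ is $\sigma(W)$-measurable while, conditionally on $W$, the variable $U_t$ has the law of $V_1$ by independence of $B$ and $W$.

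Next I apply Strassen's FLIL to $W$: using $\log\log t^{3/2}\sim\log\log t$, for $\PP_W$-a.e.\ $W$ the family $(g_t)_{t>e^e}$ is relatively compact in $C(\RR_+)$ endowed with the topology of uniform convergence on compact sets, and its set of cluster points is exactly $\mathcal K^*$. Fix such a realization of $W$. For the inclusion of $\Theta_B$ in the set of cluster laws of $\L(\tilde Z_t\mid W)$, given $f\in\mathcal K^*$ choose $t_n\to\infty$ along which $g_{t_n}\to f$ uniformly on compacts; realizing each $U_{t_n}$ as $V_1$ on a common space (legitimate since $U_{t_n}\law V_1$ under $\PP(\cdot\mid W)$), the a.s.\ finiteness of $V_1$ and local uniform convergence give $g_{t_n}(V_1)\to f(V_1)$ a.s., hence $\L(\tilde Z_{t_n}\mid W)\to\L(f(V_1))$ weakly. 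For the reverse inclusion, any subsequential weak limit of $\L(\tilde Z_{t_n}\mid W)$ admits, by Strassen, a further subsequence along which $g_{t_{n_k}}\to g\in\mathcal K^*$; the same argument identifies the weak limit as $\L(g(V_1))$.

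Compactness and closedness of $\Theta_B$ then follow from two observations. First, $\mathcal K^*$ is compact in $C(\RR_+)$ via Arzelà–Ascoli, using the uniform Hölder bound $|f(x)-f(y)|\leq|x-y|^{1/2}$ coming from Cauchy–Schwarz applied to $f(x)-f(y)=\int_y^x\dot f(s)\,ds$. Second, the map $f\mapsto f(V_1)$ from $\mathcal K^*$ into $L^2(\sigma(B))$ is continuous: the same Cauchy–Schwarz bound gives $|f(V_1)|\leq V_1^{1/2}$, so dominated convergence upgrades pointwise convergence to $L^2$ convergence. Non-convergence in law is then immediate because $\mathcal K^*$ contains both $f\equiv 0$ (yielding $\delta_0$) and, e.g., $f(x)=x\wedge 1$ (yielding the non-degenerate law of $V_1\wedge 1$).

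The step I expect to be most delicate is the composition argument in the second paragraph: for the fixed realization of $W$, one must chain the \emph{deterministic} uniform-on-compacts convergence $g_{t_n}\to f$ with the \emph{distributional} equality $U_{t_n}\law V_1$, while the function $g_{t_n}$ itself is changing with $n$. The independence $W\perp B$ is precisely what allows this coupling, and tightness is free from the equidistribution of $U_{t_n}$; the rest is a routine continuous-mapping-style argument once these ingredients are in place.
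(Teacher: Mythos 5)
Your proof is correct, and it shares the paper's skeleton while handling the final convergence step by a lighter route. Both arguments rest on the same two pillars: the scaling identity $W(V_t)\law W(t^{3/2}V_1)$, which reduces $\tilde Z_t$ (in law, given $W$) to the evaluation of the Strassen-rescaled path $W_\lambda$ with $\lambda=t^{3/2}$ at an independent copy of $V_1$, and Strassen's functional LIL, which identifies the cluster set of $(W_\lambda)$ as $\mathcal{K}^*$. Where you genuinely differ is in how the convergence of the conditional laws is extracted. The paper works in the metric space $L^1(B)$: it truncates $H_\lambda=W_\lambda(V_1)$ on $\{V_1\le n\}$, controls the remainder via the Gaussian tail $\PP_B(V_1>x)\le c_1e^{-c_2x^2}$ and the global envelope $\A_W$ of Lemma~\ref{LIL}(ii), and proves $d_{L^1(B)}(H_\lambda,\Theta_B)\to0$ together with $\liminf_\lambda d_{L^1(B)}(H_\lambda,\zeta)=0$ (Lemmas~\ref{L9} and~\ref{L:conv}). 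You instead note that, for a fixed typical $W$, the conditional law of $\tilde Z_{t_n}$ is the pushforward of the law of $V_1$ by the deterministic function $g_{t_n}$, so that local uniform convergence $g_{t_n}\to f$ together with $V_1<\infty$ a.s.\ already yields $g_{t_n}(V_1)\to f(V_1)$ a.s.\ and hence weak convergence of the laws; this bypasses Lemma~\ref{L9} and the envelope $\A_W$ entirely, at the price of obtaining only convergence in law rather than the stronger $L^1(B)$ approximation of the paper. Two small points to tighten: (i) your claim that the cluster set of $(g_t)$ in $C(\RR_+)$ with the local uniform topology is \emph{exactly} $\mathcal{K}^*$ requires a diagonal extraction from Lemma~\ref{LIL}(i), which is stated on each compact $[0,r]$ separately (the paper sidesteps this by extending functions from $[0,n]$ to $\RR_+$ by a constant); (ii) compactness of $\mathcal{K}^*$ via Arzel\`a--Ascoli also needs its closedness under local uniform limits (lower semicontinuity of the energy, or the weak-$L^2$ argument the paper gives at the end of the proof of Theorem~\ref{T:0}) --- though that compactness assertion belongs to Theorem~\ref{T:0} rather than to the statement at hand.
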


To prove Theorem \ref{T:1}, we shall   apply  Strassen \cite{Strassen}'s functional law of the iterated logarithm applied to the Brownian motion $W$.
\subsection{Proofs}

For a one-dimensional Brownian motion  $(W(t), t \geq 0)$ starting from $0$, let us define for any $ \lambda > e^e$, 
$$ W_{\lambda} (t):= \frac{W(\lambda t)}{ (2 \lambda \log \log \lambda)^{1/2}},  \qquad   t \geq 0. $$ 

\begin{lem} \label{LIL} 

(i)   Almost surely, for any $r>0$ rational numbers, $(W_\lambda(t), 0\le t \le r)$ is  relatively compact in the uniform topology and the set of its limit points is  $\mathcal{K}_{0,r}$, with  $$\mathcal{K}_{0,r}:=\Big\{ f\in \mathcal{AC}([0,r] \to {\mathbb R} ) :  f(0)=0, \int_0^{r} (\dot{f}(x))^2 dx  \le 1\Big\}.$$

(ii) There exists some finite random variable $\A_W$ only depending on $(W(x), x \geq 0)$ such that for all $\lambda \ge e^{36}$,  
$$    \sup_{ t > 0} \frac{\vert W_\lambda(t)\vert}{\sqrt{ \vert t\vert  \log \log (  \vert t\vert + \frac{1}{\vert t\vert } + 36)}} =:\A_W  < \infty .$$
\end{lem}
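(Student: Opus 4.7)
The plan for part (i) is to reduce the statement to Strassen's classical functional law of the iterated logarithm on the unit interval, applied to Brownian motion. Strassen's theorem gives, almost surely, the relative compactness of $(W_\lambda|_{[0,1]})_{\lambda > e^e}$ in $C([0,1])$ with set of limit points equal to $\mathcal{K}_{0,1}$. To transfer this to an arbitrary $r > 0$, I would use Brownian scaling: the process $\tilde W(u) := r^{-1/2} W(r u)$ is again a standard Brownian motion, and a direct computation shows that $W_\lambda(r u) = \sqrt{r}\, \tilde W_\lambda(u)$ exactly. Hence the limit set of $(W_\lambda(r\,\cdot))_\lambda$ on $[0,1]$ is $\sqrt{r}\,\mathcal{K}_{0,1}$, and an elementary change of variables in the integral $\int_0^r \dot f(t)^2\,dt$ identifies its image under the map $t \mapsto t/r$ with $\mathcal{K}_{0,r}$. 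A countable intersection over $r \in \mathbb{Q}_+$ handles the ``for every rational $r$'' quantification.

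For part (ii), I would first derive a global Khintchine-type bound on $(0,\infty)$. Combining Khintchine's LIL at infinity, $\limsup_{s \to \infty} |W(s)|/\sqrt{2s \log\log s} = 1$ a.s., with its counterpart at the origin, $\limsup_{s \to 0^+} |W(s)|/\sqrt{2s \log\log(1/s)} = 1$ a.s.\ (which follows from the former applied to the time-inverted Brownian motion $s \mapsto s W(1/s)$), and using that $W$ is a.s.\ bounded on any compact subinterval of $(0,\infty)$, one obtains a finite random variable $C_W$, measurable with respect to $W$, such that
\[ |W(s)| \le C_W \sqrt{s \log\log\!\bigl(s + \tfrac{1}{s} + 36\bigr)} \qquad \text{for every } s > 0. \]
Substituting $s = \lambda t$ and dividing by $\sqrt{2\lambda\log\log\lambda}\,\sqrt{t \log\log(t + 1/t + 36)}$ then reduces the lemma to the purely algebraic inequality
\[ \sup_{\lambda \ge e^{36}}\,\sup_{t > 0}\; \frac{\log\log\!\bigl(\lambda t + \tfrac{1}{\lambda t} + 36\bigr)}{\log\log\lambda \cdot \log\log\!\bigl(t + \tfrac{1}{t} + 36\bigr)} < \infty. \]
This in turn follows from the submultiplicative bound $\lambda t + (\lambda t)^{-1} + 36 \le (\lambda + \lambda^{-1} + 36)(t + t^{-1} + 36)$, the elementary estimate $\log\log(XY) \le \log\log X + \log\log Y + \log 2$ valid for $X, Y \ge e$, and the uniform positive lower bounds $\log\log\lambda \ge \log 36$ (since $\lambda \ge e^{36}$) and $\log\log(t + 1/t + 36) \ge \log\log 38$, which together allow one to dominate the sum $A + B$ by a multiple of the product $AB$.

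The main obstacle is the careful bookkeeping in part (ii): one has to verify that the single normalization $\sqrt{t \log\log(t + 1/t + 36)}$ captures the correct LIL behaviour of $W$ simultaneously near $0$ and near $\infty$, and that the two distinct occurrences of the iterated logarithm (once in the factor $\sqrt{2\lambda \log\log\lambda}$ inherent to the definition of $W_\lambda$, and once in the denominator of the statement) do not interact pathologically when $\lambda t$ is very small or very large. The additive constant $36$ and the threshold $\lambda \ge e^{36}$ are engineered precisely so that both iterated logarithms remain bounded below by positive constants, which is exactly what powers the sum-to-product conversion above. Part (i), by contrast, is essentially a one-line consequence of Strassen's theorem once the scaling identity is in hand.
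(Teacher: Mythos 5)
Your proof is correct. Note that the paper itself does not prove Lemma~\ref{LIL} at all --- it simply refers to \cite{GHS} --- so you are supplying an argument the authors omitted, and the one you give is the standard one. For (i), the reduction of the $[0,r]$ statement to Strassen's theorem on $[0,1]$ via the scaling identity $W_\lambda(ru)=\sqrt{r}\,\widetilde W_\lambda(u)$ and the change of variables identifying $\sqrt{r}\,\mathcal{K}_{0,1}$ (pulled back by $t\mapsto t/r$) with $\mathcal{K}_{0,r}$ is exact, and the countable intersection over rational $r$ correctly handles the order of quantifiers. For (ii), the global bound $|W(s)|\le C_W\sqrt{s\log\log(s+s^{-1}+36)}$ does follow from Khintchine's LIL at infinity and at the origin together with local boundedness, since $s+s^{-1}+36$ dominates both $s$ and $1/s$; and the remaining deterministic estimate is handled correctly by the submultiplicative bound, the inequality $\log\log(XY)\le\log\log X+\log\log Y+\log 2$ for $X,Y\ge e$, and the uniform lower bounds $\log\log\lambda\ge\log 36$ and $\log\log(t+t^{-1}+36)\ge\log\log 38$, which convert the additive bound into a multiplicative one. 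The only cosmetic remark is that the lemma's statement literally asserts that the supremum over $t$ is the \emph{same} random variable $\A_W$ for every $\lambda\ge e^{36}$; you have (reasonably, and consistently with how the lemma is used in Lemma~\ref{L9}) read it as defining $\A_W$ to be the supremum over both $\lambda$ and $t$, which is the intended meaning.
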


The proof of this lemma can be found in \cite{GHS}.

Let us define for all $\lambda > e^e$ and $ n\ge 1$, 
$$ H_\lambda:= W_\lambda(V_1), \qquad H_\lambda^{(n)}:= W_\lambda(V_1)\,  {\bf 1}_{\{V_1\leq n\}},$$

\begin{lem}\label{L9}  There exist some positive constants $c_1,c_2$ such that for any $\lambda> e^{36}$ and $n \ge 1$, we have  
\begin{eqnarray}    \EE_B \Big\vert   H_\lambda- H_\lambda^{(n)} \Big\vert &\le&  c_1\,  e^{- c_2{n^2}}\, \A_W, \label{h2nb} 
    \\ \EE_B \Big \vert  f(V_1)  \Big\vert{\bf 1}_{\{V_1> n\}}  & \le&   c_1\,  e^{- c_2{n^2}}, \label{fL(x)n} 
\end{eqnarray}   for any function $f \in  \mathcal K^*$.
\end{lem}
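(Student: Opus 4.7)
The plan is to reduce both estimates to a single analytical input, namely a sub-Gaussian upper tail bound for the self-intersection local time $V_1$ of the form
$$\PP(V_1 > x) \leq c_1 e^{-c_2 x^2}, \qquad x \geq 0.$$
Such a bound is essentially classical: the occupation time formula gives $\int_\RR L_1(x)\, dx = 1$, whence $V_1 = \int_\RR L_1(x)^2\, dx \leq \nor{L_1}_\infty$, and $\nor{L_1}_\infty$ has sub-Gaussian tails (via Barlow--Yor type moment estimates, or via a Ray--Knight representation of $L_1$ in terms of squared Bessel processes). Once this tail bound is in hand, both inequalities follow by routine integration against a sub-Gaussian density.

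For \eqref{fL(x)n}, the first step is the pointwise bound $|f(x)| \leq \sqrt{x}$ valid for every $f \in \mathcal{K}^*$: since $f(0)=0$, Cauchy--Schwarz together with $\int_0^\infty \dot f(s)^2 ds \le 1$ gives
$$|f(x)| = \Bigl| \int_0^x \dot{f}(s)\, ds \Bigr| \leq \sqrt{x}\, \Bigl( \int_0^x \dot{f}(s)^2\, ds \Bigr)^{1/2} \leq \sqrt{x}.$$
Consequently $\EE_B |f(V_1)|\, \ind_{\{V_1 > n\}} \leq \EE_B \sqrt{V_1}\, \ind_{\{V_1 > n\}}$, and an integration by parts (or a direct Cauchy--Schwarz against the tail bound) reduces this to $\int_n^\infty x^{-1/2} \PP(V_1 > x)\, dx$ plus a boundary term, both of which are $O(e^{-c_2' n^2})$ after absorbing the polynomial prefactor by a slight reduction of $c_2$.

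For \eqref{h2nb}, I would apply Lemma \ref{LIL}(ii) to dominate $|W_\lambda(V_1)|\, \ind_{\{V_1 > n\}}$ by $\A_W \sqrt{V_1\, \log\log(V_1 + 1/V_1 + 36)}\, \ind_{\{V_1 > n\}}$ (valid as soon as $\lambda \geq e^{36}$). Since $\A_W$ is $\sigma(W)$-measurable, it factors out of $\EE_B$, leaving a quantity of the form $\EE_B [g(V_1)\, \ind_{\{V_1 > n\}}]$ with $g(x) = \sqrt{x\, \log\log(x + 1/x + 36)}$. On $\{V_1 > n\}$ with $n \ge 1$ the $1/V_1$ term is negligible, so $g$ grows like $\sqrt{x \log\log x}$, and the same integration argument as above disposes of this expectation.

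The main obstacle is really just the sub-Gaussian tail of $V_1$; everything else is elementary. I expect the authors either to cite a known estimate (results of this type appear in Chen's work on self-intersection local times) or to sketch a short self-contained moment computation giving $\EE[V_1^p] \leq (Cp)^p$, which by Markov's inequality yields the Gaussian tail. Both \eqref{h2nb} and \eqref{fL(x)n} are then handled by the same template, so it is natural that the lemma packages them together with the same constants $c_1, c_2$.
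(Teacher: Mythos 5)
Your proposal is correct and follows essentially the same route as the paper: both rest on the Gaussian tail bound $\PP_B(V_1>x)\le c_1 e^{-c_2 x^2}$ (which the paper simply cites from Corollary 5.6 of Khoshnevisan--Lewis rather than rederiving via $V_1\le \nor{L_1}_\infty$), on Lemma \ref{LIL}(ii) to control $W_\lambda(V_1)$, and on the bound $|f(x)|\le\sqrt{x}$ for $f\in\mathcal K^*$. The only cosmetic difference is that the paper splits off the tail by Cauchy--Schwarz (bounding $\EE_B[W_\lambda(V_1)^2]\le c_1\A_W^2$ and $\EE_B[f(V_1)^2]\le\EE_B[V_1]$ first) instead of integrating the pointwise bounds directly against the tail.
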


{\noindent\bf Proof:}  
By Lemma \ref{LIL} (ii),  $\EE_B\big[ (W_\lambda(V_1))^2\big]  \le \A_W^2\, \EE_B \big[ \vert V_1 \vert \log \log ( \vert V_1\vert + \frac{1}{\vert V_1 \vert}+36)\big]
\leq c_1 \A_W^2$, since $V_1$ has finite moments of any order. Then by Cauchy-Schwarz' inequality, we have that 
\begin{eqnarray*}
\EE_B \Big\vert   H_\lambda- H_\lambda^{(n)} \Big\vert &=& \EE_B \Big[ W_\lambda(V_1)  1_{(V_1 >n)} \Big] \\
	 & \le& \sqrt{\EE_B \Big[ W_\lambda(V_1)^2\Big] } \, \sqrt{ \PP_B\Big( V_1 >n\Big)} \\
	 &\le& c_1 \, \A_W\,  e^{- {c_2 n^2}},
\end{eqnarray*}

\noindent by the fact that: $\PP_B\big( V_1  > x\big) \le c_1 e^{- c_2 x^2}$ for any $x>0$ (see Corollary 5.6 in \cite{KL98}). Then  we get \eqref{h2nb}. 

For the other part of the lemma, let $f \in \mathcal K^*$, observe that $\vert f(x) \vert \le \sqrt{\Big\vert x  \int_0^x (\dot{f}(y))^2 dy \Big\vert} \le \sqrt{\vert x\vert}$ for all $x \in \RR_+$.
 Then by Cauchy-Schwarz' inequality, we have that 
\begin{eqnarray*}
\EE_B \Big[\vert f(V_1) \vert 1_{(V_1 >n)} \Big] & \le& \sqrt{\EE_B \Big[ f(V_1)^2\Big] } \, \sqrt{ \PP_B\Big( V_1  >n\Big)} \\
	 & \le& \sqrt{\EE_B \Big[ V_1\Big] } \, \sqrt{ \PP_B\Big( V_1  >n\Big)} \\
	 &\le& c_1 \,  e^{- {c_2 n^2}}.
\end{eqnarray*}
 Then  \eqref{fL(x)n} follows.  $\Box$

Let $L^{1}(B)$ be the set of real random variables which are $\sigma( B_t, t\geq 0)$-measurable and $\PP_{B}$-integrable.
For any $X \in L^{1}(B)$ and any subset $\Theta$ of $L^{1}(B)$, we denote by $d_{L^{1}(B)}(X,\Theta)$ the usual distance
$\inf_{Y\in \Theta} \EE_{B}[ | X - Y | ] $.

\begin{lem}  \label{L:conv} $\PP_W$-almost surely, $$d_{L^1(B)}( H_\lambda, \Theta_B) \to 0, \qquad \mbox{ as } \lambda \to \infty,$$ where  $\Theta_B$ is defined in \eqref{KB}.    Moreover, $\PP_W$-almost surely,
 for any $\zeta \in  \Theta_B$,  
$$ \liminf_{\lambda \to \infty} d_{L^1(B)}( H_\lambda, \zeta)=0.$$
\end{lem}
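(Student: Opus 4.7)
The plan is to apply Strassen's functional LIL for $W$ (Lemma~\ref{LIL}(i)) on compact intervals $[0,n]$, together with the Gaussian-type tail $\PP_B(V_1>n)\leq c_1 e^{-c_2 n^2}$ embedded in Lemma~\ref{L9}. I will work throughout on the full-$\PP_W$-measure event on which both statements of Lemma~\ref{LIL} hold simultaneously for every rational $r>0$ and $\A_W<\infty$.

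For the first claim, fix $\varepsilon>0$ and choose a large integer $n=n(\varepsilon,W)$ such that $c_1 e^{-c_2 n^2}(\A_W+1)<\varepsilon/2$. Lemma~\ref{LIL}(i) with $r=n$ provides, for every sufficiently large $\lambda$, some $g_\lambda\in\mathcal{K}_{0,n}$ with $\sup_{[0,n]}|W_\lambda-g_\lambda|<\varepsilon/2$. I extend $g_\lambda$ to $\RR_+$ by setting it constant equal to $g_\lambda(n)$ on $[n,\infty)$; the resulting function $f_\lambda$ belongs to $\mathcal{K}^*$ since $\int_0^\infty\dot f_\lambda^2=\int_0^n\dot g_\lambda^2\leq 1$, and hence $f_\lambda(V_1)\in\Theta_B$. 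Splitting the expectation according to $\{V_1\leq n\}$ and $\{V_1>n\}$, using $|H_\lambda-f_\lambda(V_1)|\leq\varepsilon/2$ on the first event, and combining \eqref{h2nb} and \eqref{fL(x)n} on the second, I obtain $\EE_B|H_\lambda-f_\lambda(V_1)|<\varepsilon$. This gives $d_{L^1(B)}(H_\lambda,\Theta_B)<\varepsilon$ for every sufficiently large $\lambda$.

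For the second claim, fix $\zeta=f(V_1)\in\Theta_B$ and $\varepsilon>0$, and pick $n$ as above. Since $\int_0^n\dot f^2\leq\int_0^\infty\dot f^2\leq 1$, the restriction $f|_{[0,n]}$ belongs to $\mathcal{K}_{0,n}$, and by Lemma~\ref{LIL}(i) it is a limit point of $(W_\lambda|_{[0,n]})_\lambda$; hence there exists a sequence $\lambda_k\to\infty$ with $\sup_{[0,n]}|W_{\lambda_k}-f|\to 0$. The same splitting argument, this time with $f$ itself playing the role of the approximant, yields $\EE_B|H_{\lambda_k}-\zeta|<\varepsilon$ for $k$ large enough, whence $\liminf_\lambda d_{L^1(B)}(H_\lambda,\zeta)=0$.

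The only slightly delicate point is checking that the local approximant from Strassen's theorem lifts to an element of $\mathcal{K}^*$; this is ensured by the constant extension beyond $n$, which leaves the $L^2$-norm of the derivative unchanged. All other estimates follow routinely from Lemma~\ref{L9} and the relative compactness provided by Lemma~\ref{LIL}(i), so I expect no substantive difficulty beyond bookkeeping of the truncation.
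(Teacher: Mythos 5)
Your proof is correct and follows essentially the same route as the paper: truncate $V_1$ at level $n$ using the tail bounds \eqref{h2nb}--\eqref{fL(x)n}, apply Strassen's law (Lemma~\ref{LIL}(i)) on $[0,n]$, and lift the approximant to $\mathcal{K}^*$ by constant extension. The only (harmless) difference is that you absorb the factor $\A_W+1$ into the choice of $n$, whereas the paper keeps a final bound of the form $(2+\A_W)\varepsilon$ and lets $\varepsilon\to0$.
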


{\noindent\bf Proof:}  Let $\varepsilon>0$.  Choose a large $n=n(\varepsilon)$ such that $c_1 e^{- c_2 n^2} \le \varepsilon$, where $c_1, c_2$ are the constants defined in Lemma \ref{L9}. 
 By Lemma \ref{LIL} (i),  for all large $\lambda \ge \lambda_0(W, \varepsilon, n)$, there exists some function $g =g_{\lambda, W, \varepsilon, n}\in {\mathcal K}_{0, n}$ 
 such that $\sup_{ x \in [0, n]} \vert W_\lambda(x) - g(x) \vert \le \varepsilon$.   We get that $$ \EE_B \Big\vert H_\lambda^{(n)} - g(V_1)  {\bf 1}_{\{V_1\leq n\}} \Big\vert \le  \varepsilon.$$ 

We extend $g$ to $\RR_+$ by letting $g(x)= g(n)$ if $x\ge n$, then $g \in \mathcal K^*$.   By the triangular inequality, \eqref{h2nb} and \eqref{fL(x)n},  
$$ \EE_B \Big\vert H_\lambda  - g(V_1) \Big\vert  \le (2 + \A_W) \varepsilon. $$

It follows that $d_{L^1(B)}( H_\lambda, \Theta_B) \le (2 + \A_W) \varepsilon$. Hence $\PP_W$-a.s., $\limsup_{\lambda\to\infty} d_{L^1(B)}( H_\lambda, \Theta_B) \le (2 + \A_W) \varepsilon$, showing the first part in the lemma.

For the other part of the Lemma, let $h \in \mathcal K^*$ such that $\zeta=  h(V_1) $.  For any $\varepsilon>0$,  we may use \eqref{fL(x)n} and choose an integer $n=n(\varepsilon)$ such that $c_1  e^{-c_2 n^2} \le \varepsilon$ and $$ d_{L^1(B)}( \zeta, \zeta_n)   \le \varepsilon,$$

\noindent where $\zeta_n:=  h(V_1) {\bf 1}_{\{V_1\leq n\}}$.  Applying Lemma \ref{LIL} (i) to the restriction of $h$ on $[0, n]$, we may find  a sequence $\lambda_j= \lambda_j(\varepsilon, W, n) \to \infty$ such that $\sup_{ \vert x \vert \le n} \vert W_{\lambda_j}(x) - h(x) \vert \le \varepsilon$, then  
 $$ d_{L^1(B)}( H^{(n)}_{\lambda_j}, \, \zeta_n) \le \varepsilon.$$

By \eqref{h2nb} and the choice of $n$, $ d_{L^1(B)}( H^{(n)}_{\lambda_j}, H_{\lambda_j}) \le \varepsilon \A_W$ for all large $\lambda_j$, it follows from the triangular inequality that $$ d_{L^1(B)} ( \zeta, H_{\lambda_j}) \le (2 + \A_W )\varepsilon,$$
implying that $\PP_W$-a.s., $ \liminf_{\lambda \to \infty} d_{L^1(B)}( H_\lambda, \zeta) \le (2+ \A_W )\varepsilon \to 0$ as $\varepsilon\to0$.   $\Box$
  
 We now are ready to give the proof of Theorems \ref{T:1} and \ref{T:0}.
 
  {\noindent\bf Proof of Theorem \ref{T:1}.} Remark that $\PP_W$-a.s.,  \begin{equation}\label{Z}
W(V_t) \sur{(d)}=  W(V_1 t^{3/2})   \end{equation} 
from  the scaling property of the self-intersection local time of the Brownian motion $B$. 
The first part of Theorem \ref{T:1} directly follows from Lemma \ref{L:conv}. 
 
{\noindent\bf Proof of Theorem \ref{T:0}.}
We use the strong approximation of \cite{HK} : there exists on a suitably enlarged probability space,  a coupling of $q$, $S$, $B$
and $W$ such that $(q, W)$ is independent of $(S,B)$ and for any $\varepsilon \in (0,1/24)$, almost surely,
$$
K_n = 2^{-1/2} Z_n + o(n ^{\frac{3}{4} - \varepsilon} ) , \  \  n\rightarrow +\infty.
$$
From the independence of $(q, W)$ and $(S,B)$, we deduce that for $\PP$-a.e. $(q, W)$, under the quenched probability $\PP(.\vert q,W)$, the limit points of the 
 laws of $\tilde{K}_n$ and $\tilde{Z}_n$ are the same ones. 
Now, by adapting the proof of Theorem \ref{T:1}, we have that for $\PP$-a.e. $(q, W)$, under the quenched probability $\PP(.\vert q,W)$, the limit points of the 
 laws of  $\tilde{Z}_n$, as $n\rightarrow \infty$,  under the topology of  weak convergence of measures,  are  equal to the set of the laws of random variables  in 
 $\Theta_B $. It gives that for $\PP$-a.e. $(q, W)$, under the quenched probability $\PP(.\vert q,W)$, the limit points of the 
 laws of  $\tilde{K}_n$, as $n\rightarrow \infty$,  under the topology of  weak convergence of measures,  are  equal to the set of the laws of random variables  in 
 $\Theta_B $ and Theorem \ref{T:0} follows.
 
 Let $(\zeta_n)_n$ be a sequence of random variables in $\Theta_B$, each $\zeta_n$ being associated to a function $f_n\in \mathcal{K}^*$. The sequence of the (almost everywhere) derivatives of $f_n$ is then a bounded sequence in the Hilbert space $L^2(\RR_+)$, so we can extract a subsequence which weakly converges to a limit whose integral is in $\mathcal{K}^*$. Using the definition of the weak convergence and the fact that $\zeta_n=\int_{\RR_+} 1_{[0,V_1]}(y) \dot{f}_n(y)\, dy$, $(\zeta_n)_n$ converges almost surely. Since the sequence $(\zeta_n)_n$ is bounded in $L^p(B)$ for any $p\geq 1$, the convergence also holds in $L^2(B)$, and compactness follows. $\Box$

\section{Case of dimension two}
\label{sec:dim2}
\subsection{Assumptions and results}
\label{subsec:assumptions_results_d2}

We will make the following two assumptions on the random walk and on the random scenery:

\paragraph{(A1)}
The random walk increment $S_1$ takes its values in $\ZZ^d$ and has a centered law with a finite and non-singular covariance matrix $\Sigma$.
We further suppose that the random walk is aperiodic in the sense of Spitzer \cite{S76},
which amounts to requiring that $\varphi(u) = 1$ if and only if $u \in 2 \pi \ZZ^d$,
where $\varphi$ is the characteristic function of $S_1$.

\paragraph{(A2)} $\EE[q_1]=0$,  $\EE[q_1^2]=1$ and $\mathbb{E}[|q_1|^\gamma]<\infty $ for some $\gamma>2$.



Our aim is to prove the following quenched central limit theorem. 
\begin{theorem}\label{theoCLT}
Assume (A1), (A2) and $d = 2$.
Then, for any $0 < t_1 < \ldots < t_N < \infty$,
\begin{equation}\label{eq:theoCLT}
\left( \frac{K_{\lfloor n t_1 \rfloor}}{\sqrt{n \log n}}, \ldots, \frac{K_{\lfloor n t_N \rfloor}}{\sqrt{n \log n}} \right) \Rightarrow
(B_{t_1}, \ldots, B_{t_N}) \; \text{ under } \PP(\cdot | q) \text{ for } \PP\text{-a.e.\ } q,
\end{equation}
where ``$\Rightarrow$'' denotes convergence in distribution as $n \to \infty$, and
$B$ is a Brownian motion with variance $\sigma^2 = (2 \pi \sqrt{\det \Sigma})^{-1}$.
\end{theorem}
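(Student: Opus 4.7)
The strategy is to combine an annealed martingale CLT for $K_n$ with an $L^2$-concentration argument on the quenched characteristic function.

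Working in the filtration $\mathcal{H}_k := \sigma(q_1, S_1, \ldots, q_k, S_k)$, I decompose $K_n = \sum_{k=2}^n M_k$ with $M_k := q_k \eta_k$ and $\eta_k := \sum_{i<k} q_i \mathbf{1}_{\{S_i = S_k\}}$. Because $q_k$ is independent of $(\mathcal{H}_{k-1}, \eta_k)$ and has mean zero and unit variance, $(M_k)$ is a martingale difference sequence with $\EE[M_k^2 \mid \mathcal{H}_{k-1}] = \EE[\eta_k^2 \mid \mathcal{H}_{k-1}]$. The first task is to show
\[
\sum_{k=2}^n \EE[\eta_k^2 \mid \mathcal{H}_{k-1}] \;\sim\; \sigma^2 n\log n
\]
in $\PP$-probability, with $\sigma^2 = (2\pi\sqrt{\det\Sigma})^{-1}$. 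Expanding the square, the diagonal contribution is $\sum_{i<k\le n} q_i^2 p_1(S_i - S_{k-1})$, whose mean equals $\sum_{i<k\le n}\PP(S_i=S_k) \sim \sigma^2 n\log n$ by the local CLT under (A1); it concentrates via a second-moment estimate combined with the LLN on the $q_i^2$. The off-diagonal part, restricted to pairs $(i,i')$ with $S_i = S_{i'}$, is controlled by four-moment bounds on the charges (obtained via truncation from (A2)) together with triple-intersection estimates, which in $d=2$ give a negligible $o(n\log n)$. The Lindeberg condition uses $\gamma > 2$: for any $\epsilon > 0$,
\[
\sum_{k=2}^n \EE[M_k^2\mathbf{1}_{\{|M_k|>\epsilon\sqrt{n\log n}\}}\mid\mathcal{H}_{k-1}] \le \frac{\EE[|q|^\gamma]}{\epsilon^{\gamma-2}(n\log n)^{(\gamma-2)/2}}\sum_{k=2}^n \EE[|\eta_k|^\gamma\mid\mathcal{H}_{k-1}],
\]
and is $o(n\log n)$ by the analogous moment bounds on $\eta_k$. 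The martingale CLT then yields the annealed statement $K_n/\sqrt{n\log n}\Rightarrow\mathcal{N}(0,\sigma^2)$; applying the same scheme to the increments $K_{\lfloor nt_{j+1}\rfloor}-K_{\lfloor nt_j\rfloor}$ delivers the annealed joint convergence to $(B_{t_j})_j$.

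To upgrade to the quenched statement, I study $\phi_n(t,q):=\EE[e^{itK_n/\sqrt{n\log n}}\mid q]$ and aim for $\phi_n(t,q) \to e^{-\sigma^2 t^2/2}$ for $\PP$-a.e.\ $q$. Introducing an independent copy $S'$ of $S$ coupled to the same charge field,
\[
\EE|\phi_n(t,q)|^2 = \EE\bigl[e^{it(K_n(q,S)-K_n(q,S'))/\sqrt{n\log n}}\bigr],
\]
and the difference $K_n(q,S) - K_n(q,S')$ admits an analogous martingale decomposition in the enlarged filtration $\sigma(q_j, S_j, S'_j)_{j\le k}$, from which the same CLT argument yields a Gaussian limit of variance $2\sigma^2$. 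Hence $\EE|\phi_n(t,q)|^2 \to e^{-\sigma^2 t^2} = |\lim_n\EE\phi_n(t,q)|^2$, so $\var_{\PP}(\phi_n(t,q)) \to 0$. A polynomial rate of this decay, combined with Borel--Cantelli along a geometric subsequence $n_p := \lfloor \rho^p \rfloor$, gives $\PP$-a.s.\ convergence of $\phi_{n_p}(t,q)$, and a quenched moment bound on $K_{n_{p+1}} - K_{n_p}$ interpolates to the full sequence.

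The main obstacle is establishing the quantitative $L^2$-concentration of $\phi_n(t,q)$: this amounts to proving that two independent copies of the $d=2$ walk driven by a common charge field produce asymptotically decoupled rescaled Hamiltonians. In the critical dimension $d=2$ the joint multiple-intersection local times of $(S,S')$ have logarithmically divergent second moments, so the argument demands sharp control of their covariance structure together with careful use of the sub-fourth-moment assumption (A2) via truncation.
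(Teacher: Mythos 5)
Your annealed step is essentially the paper's Theorem~\ref{thm:FCLTgivenS} (a martingale CLT with bracket governed by the self-intersection local time $I_n$), and is fine. Your quenched upgrade takes a genuinely different route: $L^2$-concentration of the quenched characteristic function via two independent copies of the walk driven by the same charges, whereas the paper proves a.s.\ convergence of the quenched \emph{moments} along $\tau_n=\lceil\exp(n^\alpha)\rceil$, using truncation at $n^\beta$, a combinatorial decomposition over partitions, and the de la Pe\~na--Montgomery-Smith decoupling inequality. The two-copy idea is sound in principle (the covariance of $K_n(q,S)$ and $K_n(q,S')$ equals $\sum_{i<j}\PP(S_{j-i}=0)^2=O(n)=o(n\log n)$, so the limiting variance $2\sigma^2$ is right), but as written there are two gaps.

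First, Borel--Cantelli needs a \emph{summable quantitative} rate for $\var(\phi_n(t,\cdot))=\EE|\phi_n|^2-|\EE\phi_n|^2$, and in the critical dimension $d=2$ every available rate is only logarithmic in $n$: the fluctuations of $I_n/(n\log n)$ are of order $1/\log n$, and the paper's corresponding estimates for the fluctuating parts of the quenched moments are only $(\log n)^{-2}$ and $(\log n)^{-\gamma/2}$ (Cases 2 and 3 in the proof of Proposition~\ref{prop:caseAc_notempty}). A comparable bound for $\EE|\phi_n|^2-|\EE\phi_n|^2$ would require a Berry--Esseen-type rate for the two-copy martingale CLT, which reduces to exactly the second-moment computations on the multiple intersection local times of $(S,S')$ that you defer to your closing paragraph; this is the actual content of the proof, not a technical afterthought. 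Second, the geometric subsequence $n_p=\lfloor\rho^p\rfloor$ is incompatible with your interpolation step: the increment $K_{n_{p+1}}-K_{n_p}$ has annealed variance of order $(n_{p+1}-n_p)\log(n_{p+1}-n_p)\asymp(\rho-1)\,n_p\log n_p$, i.e.\ of the \emph{same order} as the normalization, so no quenched moment bound can make $\sup_{n_p\le m\le n_{p+1}}|K_m-K_{n_p}|/\sqrt{n_p\log n_p}$ vanish. One must take $n_{p+1}/n_p\to1$ --- the paper uses $\tau_p=\lceil\exp(p^\alpha)\rceil$ with $\alpha\in(\tfrac12\vee\tfrac2\gamma,1)$, so that $\tau_{p+1}-\tau_p\le C\tau_p/p^{1-\alpha}$ --- and summability along such a denser subsequence then forces variance decay at least $(\log n)^{-1/\alpha}$ with $1/\alpha<2\wedge(\gamma/2)$, which is precisely what the paper's estimates deliver and what your plan does not yet supply.
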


\noindent\textbf{Remark:}
The conclusion of this theorem still holds if, alternatively, the assumptions (A1) and $d=2$ are replaced 
by the following:
\paragraph{(A1')}   The sequence $S = (S_n)_{n \ge 0}$ is an aperiodic random walk in $\ZZ$ starting from $0$ 
such that the sequence $\left(\frac{S_n}n\right)_n$ converges
in distribution to a random variable with characteristic function given by 
$t\mapsto \exp(-a |t|)$ with $a>0$, in that case $\sigma^2$ is given by  $(2\pi a)^{-1}$.

Indeed, the proof of Theorem \ref{theoCLT} depends on $S$ through properties of the self-intersection local time and of the intersection local time of the random walk $S$ which are known to be the same under assumptions (A1) in $d=2$ or (A1') in $d=1$.

\noindent\textbf{Remark:}
Theorem~\ref{theoCLT} implies convergence of finite-dimensional distributions of $K_{\lfloor n t \rfloor} / \sqrt{n \log n}$ under the quenched law
in any countable set of times $t$. If additionally tightness in Skorohod space
can be established, this will imply functional convergence to Brownian motion.

An ingredient in the proof of Theorem \ref{theoCLT} is the following functional central limit theorem
under $\mathbb{P}(\cdot |S)$, which is of independent interest. 
Indeed, it implies the same result under the annealed law, improving the previously
known assumptions for such a theorem to hold (see \cite{HK}).

Let
\begin{equation}
s_n^2 := \left\{ \begin{array}{lcl}
n \log n & \text{ if } & d =2,\\
n & \text{ if } & d \ge 3,
\end{array}\right.
\end{equation}
and
\begin{equation}
\sigma^2 := \left\{\begin{array}{lcl}
(2 \pi \sqrt{\det \Sigma})^{-1} & \text{ if } & d = 2,\\
\sum_{n=1}^{\infty} \PP(S_n=0) & \text{ if } & d \ge 3.
\end{array}\right.
\end{equation}

\begin{theorem}\label{thm:FCLTgivenS}
Under conditions (A1)--(A2) and $d\ge2$, or (A1')--(A2), for a.e.\ realization of $S$, the process
\begin{equation}
B^{(n)}_t := s_n^{-1} K_{\lfloor n t\rfloor}, t \ge 0,
\end{equation}
converges weakly under $\PP(\cdot | S)$ in the Skorohod topology as $n \to \infty$ to a Brownian motion with variance $\sigma^2$.
\end{theorem}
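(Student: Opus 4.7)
\textbf{Proof plan for Theorem \ref{thm:FCLTgivenS}.}
The strategy is to fix a typical realization of $S$ and apply the martingale functional central limit theorem to $K_n$ regarded as a martingale in the charges. Writing $X_j := \sum_{i=1}^{j-1} q_i \mathbf{1}_{\{S_i = S_j\}}$ for $j\ge 2$, we have $K_n = \sum_{j=2}^n q_j X_j$, and this is a martingale under $\PP(\cdot\mid S)$ with respect to the filtration generated by the charges, since $X_j$ is measurable with respect to $q_1,\ldots,q_{j-1}$ and the $q_j$ are i.i.d.\ and centered. Using $\EE[q_1^2]=1$, the conditional quadratic variation is $V_n := \sum_{j=2}^n X_j^2$. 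I will invoke the martingale functional central limit theorem (e.g.\ Theorem 3.2 of Hall and Heyde), for which it is enough to verify, for $\PP$-a.e.\ realization of $S$: (i) $s_n^{-2}V_{\lfloor nt\rfloor}\to \sigma^2 t$ in $\PP(\cdot\mid S)$-probability for every $t>0$; and (ii) a conditional Lyapunov bound which, since $\EE|q_1|^\gamma<\infty$ with $\gamma>2$, will then imply the conditional Lindeberg condition.

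For (i), a direct expansion of $X_j^2$ yields $\EE[V_n\mid S]=I_n$, the self-intersection local time $\sum_{1\le i<j\le n}\mathbf{1}_{\{S_i=S_j\}}$; classical asymptotics give $I_{\lfloor nt\rfloor}/s_n^2\to\sigma^2 t$ almost surely. To handle the fluctuations I decompose
$$V_n-I_n = \sum_{i<j\le n}(q_i^2-1)\mathbf{1}_{\{S_i=S_j\}} + 2\sum_{i_1<i_2<j\le n}q_{i_1}q_{i_2}\mathbf{1}_{\{S_{i_1}=S_{i_2}=S_j\}},$$
and compute the variance of each piece under the charges; rewriting the resulting sums in terms of the local times $\ell_n(x):=\#\{i\le n: S_i=x\}$ gives bounds by constant multiples of $\sum_x \ell_n(x)^3$ and $\sum_x \ell_n(x)^4$ respectively. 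In $d=2$ these quantities are almost surely of order $n(\log n)^2$ and $n(\log n)^3$, while in $d\ge3$ both are almost surely $O(n)$; either way they are $o(s_n^4)$, so Chebyshev yields the required convergence in probability.

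For (ii), I apply Rosenthal's inequality to $X_j$, which conditionally on $S$ is a sum of independent centered random variables, to obtain $\EE[|X_j|^\gamma\mid S]\le C_\gamma(\ell_{j-1}(S_j)+\ell_{j-1}(S_j)^{\gamma/2})$. Summing in $j$ bounds the relevant moment by $C_\gamma\bigl(I_n+\sum_x \ell_n(x)^{1+\gamma/2}\bigr)$, and in $d=2$ the second sum is almost surely at most of order $n(\log n)^{\gamma/2}$, which together with $I_n\sim \sigma^2 n\log n$ is $o((n\log n)^{\gamma/2})=o(s_n^\gamma)$ as soon as $\gamma>2$; the case $d\ge3$ is easier. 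Since $\EE[|q_j X_j|^\gamma\mid q_1,\ldots,q_{j-1}]=|X_j|^\gamma\,\EE|q_1|^\gamma$, Markov's inequality then delivers the Lyapunov bound $s_n^{-\gamma}\sum_j \EE[|q_jX_j|^\gamma\mid q_1,\ldots,q_{j-1}]\to 0$ in $\PP(\cdot\mid S)$-probability.

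The main obstacle is establishing the almost-sure upper bounds on $\sum_x\ell_n(x)^k$ in $d=2$: the mean scalings $n(\log n)^{k-1}$ are classical via Green-function moment computations, but upgrading them to almost-sure bounds requires a second-moment estimate combined with Borel--Cantelli along a dyadic subsequence, exploiting the monotonicity of $\sum_x\ell_n(x)^k$ in $n$. A secondary technicality is that the variance computation in step (i) a priori uses $\EE[q_1^4]<\infty$; when $2<\gamma<4$ the diagonal contribution $\sum_i (q_i^2-1)\sum_{j>i}\mathbf{1}_{\{S_j=S_i\}}$ should instead be handled via the Marcinkiewicz--Zygmund inequality at exponent $\gamma/2$, which still produces the required $o(s_n^4)$ control.
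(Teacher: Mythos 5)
Your proposal is correct and follows essentially the same route as the paper's proof in Appendix~\ref{sec:appendixFCLT}: a martingale CLT in the charges under $\PP(\cdot\mid S)$, with the conditional quadratic variation decomposed into $I_n$ plus exactly the two remainder terms you write, all controlled by the $p$-fold self-intersection local times $I_n^{[p]}$. The only cosmetic differences are that the paper invokes known laws of large numbers for $I_n^{[p]}$ rather than re-deriving almost-sure bounds via Borel--Cantelli, and treats the $(q_i^2-1)$ term for $2<\gamma<4$ via a Hanson--Wright tail estimate instead of Marcinkiewicz--Zygmund at exponent $\gamma/2$; both yield the same bound $I_n^{[1+\gamma/2]}/s_n^{\gamma}\to 0$.
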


The proof of Theorem~\ref{thm:FCLTgivenS} is an application of the martingale CLT, 
and is given in Appendix~\ref{sec:appendixFCLT}.

The proof of Theorem~\ref{theoCLT} will be given in two steps as follows.
Define the subsequence
\begin{equation}\label{deftaun}
\tau_n := \lceil \exp n^\alpha \rceil, \;\;  \frac{1}{2} \vee \frac{2}{\gamma} < \alpha < 1.
\end{equation}

Then the following two propositions directly imply Theorem~\ref{theoCLT}. Both assume $d=2$ and (A1)--(A2).
\begin{prop}\label{prop:CLTsubseq}
For any $0 < t_1 < \cdots < t_N < \infty$, 
\begin{equation}\label{eq:CLTsubseq}
\left( \frac{K_{\lfloor \tau_n t_1 \rfloor}}{\sqrt{\tau_n \log \tau_n}}, \ldots, \frac{K_{\lfloor \tau_n t_N \rfloor}}{\sqrt{\tau_n \log \tau_n}} \right) \Rightarrow
(B_{t_1}, \ldots, B_{t_N}) \; \text{ under } \PP(\cdot | q) \text{ for } \PP\text{-a.e.\ } q.
\end{equation}
\end{prop}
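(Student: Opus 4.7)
Proof plan.  It suffices to show, for each fixed $\vec\xi \in \mathbb{Q}^N$, that the quenched characteristic function
\[
\phi_n(q) := \EE\Big[\exp\Big(\mathrm{i}\sum_{k=1}^N \xi_k \tfrac{K_{\lfloor n t_k\rfloor}}{\sqrt{n\log n}}\Big)\,\Big|\,q\Big]
\]
converges along $(\tau_n)$ to $\Psi := \exp\big(-\tfrac{\sigma^2}{2}\sum_{j,k}\xi_j\xi_k(t_j\wedge t_k)\big)$ for $\PP$-a.e.\ $q$.  Intersecting null sets over a countable dense set of $\vec\xi$ and using uniform continuity of characteristic functions on compacts then gives the finite-dimensional convergence claimed in the proposition.

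Step~1 (annealed).  Theorem~\ref{thm:FCLTgivenS} yields $\EE[\phi_n\mid S]\to\Psi$ for $\PP$-a.e.\ $S$, and dominated convergence ($|\phi_n|\le 1$) gives $\EE_q[\phi_n]\to\Psi$.  Step~2 (variance via two walks).  Let $S'$ be an independent copy of $S$, independent of $q$; since $S,S'$ are independent given $q$,
\[
\EE_q|\phi_n(q)|^2 = \EE\Big[\exp\Big(\mathrm{i}\sum_k \xi_k\tfrac{K_{\lfloor nt_k\rfloor}(q,S)-K_{\lfloor nt_k\rfloor}(q,S')}{\sqrt{n\log n}}\Big)\Big].
\]
In $d=2$ the cross-covariance satisfies $\cov\big(K_n(q,S),K_n(q,S')\big) = \sum_{1\le i<j\le n}\PP(S_i=S_j)^2 = O(n) = o(n\log n)$, so applying Theorem~\ref{thm:FCLTgivenS} jointly to the pair $(S,S')$ gives $\EE_q|\phi_n|^2 \to |\Psi|^2$ with a quantitative rate of order $1/\log n$; combined with Step~1, this yields $\var_q(\phi_n) = O(1/\log n)$.

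Step~3 (Borel--Cantelli along $(\tau_n)$).  Since $\var_q(\phi_{\tau_n}) = O(n^{-\alpha})$ is not summable for $\alpha<1$, Chebyshev with the variance alone is insufficient.  My plan is to (i) truncate the charges at a level $M_n$ of order $\tau_n^{1/\gamma}$ up to logarithmic factors --- which the moment hypothesis $\EE|q_1|^\gamma<\infty$ permits, and which is rendered summable in $n$ precisely by the condition $\alpha>2/\gamma$; and (ii) for the truncated system, establish a higher centred moment bound
\[
\EE_q\big|\phi_{\tau_n}-\EE_q\phi_{\tau_n}\big|^{2k} = O\big((\log\tau_n)^{-k}\big) = O(n^{-k\alpha})
\]
by iterating the two-walk computation to $2k$ independent walks with shared (truncated) charges, and carefully cancelling the multi-walk intersection corrections that are asymptotically negligible in $d=2$.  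Picking $k$ with $k\alpha>1$ --- which is possible since $\alpha>1/2$, so $k=2$ works --- Chebyshev's inequality plus Borel--Cantelli then yield $\phi_{\tau_n}(q)\to\Psi$ for $\PP$-a.e.\ $q$.

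The heart of the proof, and the main obstacle, is step~(ii).  The trivial bound $|\phi_n-\EE_q\phi_n|\le 2$ only gives $\EE|\phi_n-\EE_q\phi_n|^{2k}\le 4^{k-1}\var_q(\phi_n)$, which does not improve on the variance; obtaining the Gaussian-type $(\log n)^{-k}$ decay requires writing the $2k$-th centred moment as a $2k$-walk annealed characteristic function and controlling its deviation from the target at order $(\log n)^{-k}$.  This calls for a delicate combinatorial analysis of multi-walk intersection local times on $\ZZ^2$, combined with the precise cancellations enforced by the charge truncation at level $\tau_n^{1/\gamma}$: the condition $\alpha>1/2$ then dictates how small $k$ may be taken, while $\alpha>2/\gamma$ calibrates the truncation level against the available $\gamma$-moment of the charges.
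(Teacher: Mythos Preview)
Your overall architecture is right and matches the paper's: establish the annealed limit, show quenched concentration, and use Borel--Cantelli along the sparse subsequence $\tau_n$. But the execution diverges from the paper and contains a real gap. The paper does \emph{not} work with characteristic functions; it uses the method of moments directly on $K$. After truncating the charges at level $b_n=n^\beta$ with $\beta<1/4$ (not $\tau_n^{1/\gamma}$), the quenched mixed moment $m_n^{(\vec p)}=\EE[\prod_k(2K^{(n)}_{\lfloor nt_k\rfloor})^{p_k}\mid q]$ is a polynomial in the truncated charges, and the paper decomposes it combinatorially over partitions $\mathcal P$ of a $2p$-element index set and subsets $\mathcal A\subset\mathcal P$. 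The term $\mathcal A=\mathcal P$ is the annealed moment (bounded via self-intersection local time estimates), and each term with $\mathcal A^c\neq\emptyset$ is shown to vanish a.s.\ along $\tau_n$ using the de~la~Pe\~na--Montgomery-Smith decoupling inequality together with Burkholder and Marcinkiewicz--Zygmund bounds. The condition $\alpha>2/\gamma$ enters not in the truncation step (which is summable for any $\beta>0$ since $\tau_n$ grows faster than any polynomial) but in controlling the worst ``Case~3'' terms of this decomposition, where one obtains an $L^\gamma$ bound of order $(\log n)^{-\gamma/2}$; the condition $\alpha>1/2$ handles the ``Case~2'' terms via an $L^2$ bound of order $(\log n)^{-2}$.

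The gap in your proposal is precisely the part you flag as ``the heart of the proof'': your claimed bound $\EE_q|\phi_{\tau_n}-\EE_q\phi_{\tau_n}|^{2k}=O((\log\tau_n)^{-k})$ is asserted, not proved, and neither is the rate $O(1/\log n)$ in Step~2. Invoking Theorem~\ref{thm:FCLTgivenS} ``jointly for $(S,S')$'' gives only a qualitative limit, not a rate; and for the $2k$-th centred moment you would have to control a $2k$-walk annealed expectation of $\exp(\cdot)$ to order $(\log n)^{-k}$, which is strictly harder than the polynomial moment analysis the paper carries out (since $\phi_n$ is not polynomial in $q$, decoupling does not apply directly, and a Taylor expansion would bring you back to the paper's moment method anyway). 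In short, your outline is a reasonable heuristic for why the result should hold, but the actual work---the quantitative combinatorial control of the fluctuating part of the quenched moments---is exactly what the paper's Sections~3.3.2--3.3.3 supply and your proposal leaves open.
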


\begin{prop}\label{prop:convas}
Define $i(n) \in \NN$ by $\tau_{i(n)} \le n < \tau_{i(n) + 1}$. 
Then, for any $t > 0$,
\begin{equation}\label{eq:convas}
\frac{K_{\lfloor n t \rfloor}}{\sqrt{n \log n}} - \frac{K_{\lfloor \tau_{i(n)} t \rfloor}}{\sqrt{\tau_{i(n)} \log \tau_{i(n)}}}
\end{equation}
converges in probability to $0$ as $n \to \infty$ under $\mathbb{P}(\cdot | q)$ for $\mathbb{P}$-a.e.\ $q$.
\end{prop}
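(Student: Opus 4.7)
Split the quantity in \eqref{eq:convas} as
\[
\frac{K_{\lfloor nt\rfloor} - K_{\lfloor \tau_{i(n)} t\rfloor}}{\sqrt{n\log n}} + K_{\lfloor \tau_{i(n)}t\rfloor}\left(\frac{1}{\sqrt{n\log n}} - \frac{1}{\sqrt{\tau_{i(n)}\log\tau_{i(n)}}}\right)
\]
and control the two summands separately.

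For the second (normalization) term, since $\alpha<1$ one has $\tau_{k+1}/\tau_k = \exp\bigl((k+1)^\alpha - k^\alpha\bigr)(1+o(1))\to 1$, so $n/\tau_{i(n)} \to 1$ deterministically and $\sqrt{n\log n}/\sqrt{\tau_{i(n)}\log\tau_{i(n)}} \to 1$. Combined with the $\PP(\cdot|q)$-distributional convergence of $K_{\lfloor \tau_{i(n)}t\rfloor}/\sqrt{\tau_{i(n)}\log\tau_{i(n)}}$ to $B_t$ supplied by Proposition~\ref{prop:CLTsubseq} (which in particular gives tightness), this term vanishes in $\PP(\cdot|q)$-probability for $\PP$-a.e.\ $q$.

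The increment term is the substantive one. The plan is to establish
\[
M_k := \max_{\tau_k \le n < \tau_{k+1}} \frac{|K_{\lfloor nt\rfloor} - K_{\lfloor \tau_k t\rfloor}|}{\sqrt{\tau_k\log\tau_k}} \longrightarrow 0 \quad \PP\text{-almost surely},
\]
since, by Fubini, annealed a.s.\ convergence automatically passes to a.s.\ (hence in-probability) convergence under $\PP(\cdot|q)$ for $\PP$-a.e.\ $q$. Conditionally on $S$, the process $(K_n)_n$ is a martingale with differences $D_j = q_j \sum_{i<j} q_i \mathbf{1}_{\{S_i=S_j\}}$; Doob's $L^\gamma$ maximal inequality applied conditionally on $S$ and then integrated, followed by a Rosenthal-type bound for the martingale increment, yields
\[
\EE[M_k^\gamma] \le C_\gamma\,(\tau_k\log\tau_k)^{-\gamma/2}\,\EE\bigl|K_{\lfloor \tau_{k+1}t\rfloor} - K_{\lfloor \tau_k t\rfloor}\bigr|^\gamma,
\]
and the right-hand side is bounded by decomposing it into a contribution from the predictable quadratic variation (with mean $\EE[V_{\lfloor\tau_{k+1}t\rfloor}-V_{\lfloor\tau_k t\rfloor}] \sim c\,(\tau_{k+1}-\tau_k)\log\tau_k$) and a $\sum_j R_j^{\gamma/2}$ term with $R_j = \#\{i<j : S_i = S_j\}$ exponentially concentrated around its mean $\sim \log j$. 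Plugging in the expansions $\tau_{k+1}-\tau_k\sim\alpha\tau_k k^{\alpha-1}$ and $\log\tau_k\sim k^\alpha$, the hypothesis $\alpha > 2/\gamma$ makes $\sum_k \PP(M_k > \epsilon) < \infty$ via Markov's inequality, and Borel--Cantelli concludes.

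The hard part will be the sharp $L^\gamma$ estimate for the martingale increment: it must reconcile the Doob/Burkholder/Rosenthal machinery with fine 2D local-time asymptotics, and the constants must align precisely with the block geometry $(\tau_k)$ chosen via the assumption $\alpha > 2/\gamma$ in order to deliver a right-hand side that is summable in $k$.
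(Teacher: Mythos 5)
Your overall skeleton coincides with the paper's: the same two-term decomposition, the same treatment of the normalization term via tightness from Proposition~\ref{prop:CLTsubseq} and $\tau_{i(n)}/n\to1$, and the same reduction to showing that the annealed block maximum $M_k$ tends to $0$ $\PP$-a.s.\ via Borel--Cantelli. The gap is in the key estimate. A Doob--Rosenthal bound in $L^\gamma$ can only give a \emph{polynomial} tail: the typical size of $K_{\lfloor\tau_{k+1}t\rfloor}-K_{\lfloor\tau_k t\rfloor}$ is of order $\sqrt{(\tau_{k+1}-\tau_k)\log\tau_k}\asymp\sqrt{\tau_k\,k^{2\alpha-1}}$, while the threshold is $\epsilon\sqrt{\tau_k\log\tau_k}\asymp\sqrt{\tau_k\,k^{\alpha}}$, so Markov in $L^\gamma$ yields at best
\begin{equation*}
\PP(M_k>\epsilon)\;\le\;C\,k^{-(1-\alpha)\gamma/2},
\end{equation*}
which is summable only if $\alpha<1-2/\gamma$. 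That requirement is incompatible with the standing constraint $\alpha>\tfrac12\vee\tfrac2\gamma$ from \eqref{deftaun} (needed in the proof of Proposition~\ref{prop:CLTsubseq}) whenever $\gamma\le4$ --- e.g.\ for $\gamma=2.5$ you would need $\alpha>0.8$ and $\alpha<0.2$ simultaneously. So your "hard part" is not merely hard: under the paper's hypotheses ($\gamma>2$ arbitrary) the estimate you are aiming for cannot close the Borel--Cantelli argument. A side issue: $N_{j-1}(S_j)$ is not exponentially concentrated around $\log j$ in $d=2$ (it vanishes with probability bounded away from $0$ and has fluctuations of the order of its mean), though this is not the main obstruction.

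The paper gets around this by obtaining \emph{stretched-exponential} rather than polynomial bounds, and this requires two ingredients you do not have. First, it reduces to the truncated charges $q^{(\tau_{n+1})}$ via Proposition~\ref{prop:TvsNT}, so that all summands are bounded. Second, it observes that, conditionally on $S$, the increment $K^{(n)}_{a,b}$ decomposes as $\sum_{x}(\Lambda_b(x)-\Lambda_a(x))$ with \emph{independent}, centered, bounded contributions indexed by the sites $x$, which makes Bernstein's inequality under $\PP(\cdot|S)$ applicable; the sup over the block is handled not by Doob in $L^\gamma$ but by the Newman--Wright demimartingale maximal inequality, which controls the maximum by (a constant times) the \emph{square root} of the endpoint tail probability. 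Integrating the Bernstein bound over $S$ using tail estimates for $N_n^*$ and $I_n$ then gives terms decaying like $e^{-cn^{(1-\alpha(1+2\delta))/2}}$ and negative powers of $\log\tau_n=n^\alpha$, which are summable for every $\alpha\in(\tfrac12\vee\tfrac2\gamma,1)$. To repair your argument you would need to replace the Rosenthal step by this (or an equivalent) exponential concentration mechanism; moment bounds alone will not suffice.
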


Propositions~\ref{prop:CLTsubseq} and \ref{prop:convas} are proved in Sections~\ref{sec:proofpropCLTsubseq} and \ref{sec:proofconvas},
respectively. First, we recall in Section~\ref{sec:RW2d} some results about two-dimensional random walks.

\subsection{Two-dimensional random walks}
\label{sec:RW2d}

We gather here some useful facts concerning the local times
of two-dimensional random walks. In the following we always assume (A1) and $d=2$. 
Analogous results hold under the alternative assumption (A1').

\subsubsection{Maximum local times}
\label{subsec:avmaxloctimes}

Let $N_n(x) := \sum_{i=1}^n \mathbf{1}_{\{S_i=x\}}$ be the local times of the random walk $S$ up to time $n$ and
\begin{equation}\label{defmaxloctimes}
N^*_n := \sup_{x \in \ZZ^2} N_n(x)
\end{equation}
be the maximum among them.

\begin{lem}\label{lemma:avmaxloctimes}
\begin{enumerate}
\item[(i)] For all $k \in \NN$, there exists a $K:=K(k) > 0$ such that
\begin{equation}\label{eq:avloctimes}
\EE \left[ (N_n^*)^k \right] \le K (\log n)^{2k} \;\; \forall \; n \geq 2.
\end{equation}
\item[(ii)] There exists a $K > 0$ 
such that
\begin{equation}\label{eq:maxloctimes}
\PP \left(N^*_n > K (\log n)^{2} \right)
\le n^{-2} \;\; \forall \; n \ge 1.
\end{equation}
\end{enumerate}
\end{lem}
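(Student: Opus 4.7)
The plan is to deduce both statements from a single-site stochastic domination: for a planar aperiodic random walk the local time $N_n(x)$ at any fixed site behaves, up to a constant, like a geometric random variable with success parameter of order $(\log n)^{-1}$. Since the walk visits at most $n$ distinct sites in $n$ steps, a union bound over the range will turn tails of $(\log n)^2$ into polynomial tails in $n$.

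More concretely, the first step is to bound $N_n(x)$ stochastically by $N_n(0)$ (started at $0$) via the strong Markov property at the first hitting time of $x$: on the event that $x$ is visited, the number of further visits before time $n$ is dominated by the number of visits to $0$ before time $n$ for an independent copy. Next, using aperiodicity and the finite non-singular covariance assumption (A1), the classical 2D potential theory estimate gives
\begin{equation*}
p_n := \PP(\tau_0 \le n \mid S_0 = 0) = 1 - \frac{c + o(1)}{\log n}, \qquad \tau_0 := \inf\{k \ge 1 : S_k = 0\},
\end{equation*}
for some $c > 0$, so by iterating the strong Markov property at successive returns to $0$,
\begin{equation*}
\PP(N_n(0) \ge k) \le p_n^{\,k-1} \le \exp\!\left(-\frac{c'(k-1)}{\log n}\right)
\end{equation*}
for large $n$. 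Choosing $k = K(\log n)^2$ this is $\le C n^{-c' K}$, and a union bound over the at most $n$ sites in the range yields
\begin{equation*}
\PP\!\left(N_n^* \ge K(\log n)^2\right) \le n \cdot C n^{-c' K},
\end{equation*}
which gives (ii) once $K$ is chosen large enough that $c' K \ge 3$.

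For (i), I would split
\begin{equation*}
\EE[(N_n^*)^k] = \EE[(N_n^*)^k \, \mathbf{1}_{\{N_n^* \le K(\log n)^2\}}] + \EE[(N_n^*)^k \, \mathbf{1}_{\{N_n^* > K(\log n)^2\}}],
\end{equation*}
bound the first term trivially by $K^k (\log n)^{2k}$, and for the second use the deterministic bound $N_n^* \le n$ together with (ii) applied with a $K$ large enough that $n^k \cdot \PP(N_n^* > K(\log n)^2)$ is $o(1)$ (which requires picking $K$ depending on $k$ so that $c' K \ge k + 2$). The main obstacle is really only the quantitative input on the return probability $\PP_0(\tau_0 > n) \sim c/\log n$; this is standard under (A1) (it is essentially the statement that the Green's function $G_n(0,0)$ grows like $\log n$, a consequence of the local central limit theorem for aperiodic walks with finite non-degenerate covariance), and the analogous Cauchy-type estimate in the one-dimensional setting (A1') supplies the same bound, so the same argument goes through without modification in that alternative regime.
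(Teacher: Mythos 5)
Your proof is correct, but it is a genuinely different route from the paper's: the paper disposes of both statements in one line by citing Lemma 18(b) of Fleischmann, M\"orters and Wachtel \cite{FMW}, whereas you give a self-contained argument. Your mechanism --- dominate $N_n(x)$ by $1+N_n(0)$ via the strong Markov property at the first hitting time of $x$, get the geometric tail $\PP(N_n(0)\ge k)\le \PP(\tau_0\le n)^{k-1}\le \exp(-c'(k-1)/\log n)$ from the classical estimate $\PP(\tau_0>n)\ge c/\log n$, and then sum over the range (rigorously, $\sum_x \PP(N_n(x)\ge k)\le \EE[|\mathcal{R}_n|]\,\PP(N_n(0)\ge k-1)\le (n+1)\PP(N_n(0)\ge k-1)$, which is exactly the union bound you invoke) --- is sound, and the deduction of (i) from (ii) by splitting at the threshold $K(k)(\log n)^2$ and using $N_n^*\le n$ on the tail event is standard and correct. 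The only external input is the return-probability asymptotic $\PP(\tau_0>n)\sim c/\log n$, which you correctly identify as the Erd\H{o}s--Taylor/Dvoretzky--Erd\H{o}s-type consequence of the local CLT under (A1) (via $G_n=\sum_{k\le n}\PP(S_k=0)\sim c\log n$ and a Tauberian argument), and the same holds under (A1') since the Cauchy-type local limit theorem again gives $\PP(S_k=0)\sim c/k$. What the paper's citation buys is brevity; what your argument buys is transparency and independence from \cite{FMW}, at the cost of having to import the return-probability estimate, which one should state with a precise reference rather than only as ``standard'' if this were to replace the paper's proof.
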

\begin{proof}
The two statements follow from Lemma 18(b) in \cite{FMW}.
\end{proof}

\subsubsection{Self-intersection local times}
\label{subsec:SILT}

For $p \in \NN$, the $p$-fold self-intersection local time $I^{[p]}_n$ of $S$ up to time $n$ is defined by
\begin{equation}\label{defI[p]n}
I^{[p]}_n := \sum_{x\in \ZZ^d} N_n^p(x) = \sum_{1 \le i_1, \ldots, i_p \le n} \mathbf{1}_{\{S_{i_1} = \cdots = S_{i_p}\}}.
\end{equation}
When $p=2$ we will omit the superscript and write $I_n$.
\begin{lem}\label{lem:intersecloctimes}
When $d=2$, for all $p \ge 2$ and $k \in \NN$ there exists a $K>0$ such that
\begin{equation}\label{eq:intersecloctimes1}
\EE\left[ (I^{[p]}_n)^k \right] \le K n^k (\log n)^{k(p-1)} \;\; \forall \; n \geq 2.
\end{equation}
\end{lem}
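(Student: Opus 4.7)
The plan is to reduce the $p$-fold self-intersection local time to a sum indexed by the walk, then exploit that for a two-dimensional random walk the number of returns to $0$ up to time $n$ has $L^k$-norms of order $\log n$.

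A direct rewriting gives
\[
I^{[p]}_n \;=\; \sum_{x\in\ZZ^2} N_n(x)^{p-1}\,N_n(x) \;=\; \sum_{x\in\ZZ^2} N_n(x)^{p-1}\sum_{i=1}^n \mathbf{1}_{\{S_i=x\}} \;=\; \sum_{i=1}^n N_n(S_i)^{p-1}.
\]
Because $x\mapsto x^k$ is convex on $[0,\infty)$, Jensen's (power-mean) inequality yields
\[
\bigl(I^{[p]}_n\bigr)^k \;\le\; n^{k-1}\sum_{i=1}^n N_n(S_i)^{k(p-1)},
\]
and after taking expectations the claim reduces to showing that, for every $m\in\NN$, there is a constant $C_m$ such that
\[
\sup_{1\le i\le n}\EE\bigl[N_n(S_i)^m\bigr]\;\le\; C_m(\log n)^m,\qquad n\ge 2.
\]

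To control $\EE[N_n(S_i)^m]$ I would decompose $N_n(S_i)=1+\alpha_i+\beta_i$ with $\alpha_i:=\#\{j<i:S_j=S_i\}$ and $\beta_i:=\#\{j>i:S_j=S_i\}$. Reversing the increments on $[1,i]$, which are i.i.d., shows that $\alpha_i$ has the same law as the number $R_i$ of returns to $0$, up to time $i$, of a random walk with the same step distribution as $S$; the Markov property at time $i$ gives $\beta_i\stackrel{d}{=}R_{n-i}$, independent of $\alpha_i$. The problem is thus reduced to bounding the moments of the return-to-$0$ counts $R_m$ for a 2D walk.

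That bound is classical: expanding $\EE[R_m^l]$ as a sum over $l$-tuples of return times and iterating the Markov property yields
\[
\EE[R_m^l] \;\le\; l!\,\Bigl(\sum_{j=0}^{m}\PP(S_j=0)\Bigr)^l,
\]
and the local limit theorem under (A1) in $d=2$ gives $\PP(S_j=0)=O(1/j)$, so $\sum_{j=1}^m \PP(S_j=0) = O(\log m)$. Combining these ingredients produces the announced bound $\EE[(I^{[p]}_n)^k]\le K n^k(\log n)^{k(p-1)}$. The main care needed is in justifying the distributional identity $\alpha_i\stackrel{d}{=}R_i$ (which uses the i.i.d., not merely Markovian, nature of the increments) and the independence of $\alpha_i$ and $\beta_i$; the $d=2$ local-CLT input is otherwise entirely standard.
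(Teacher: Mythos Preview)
Your argument is correct. The paper itself does not prove this lemma but simply cites Proposition~2.3 of \cite{GuPo13}; judging from the proof of Lemma~\ref{lem:intersecloctimes_d>2}, which adapts that same proposition by replacing the $d=2$ input with $\sup_n\EE[\widetilde N_n(0)^k]<\infty$, the approach in \cite{GuPo13} is structurally the same as yours: rewrite $I^{[p]}_n=\sum_{i=1}^n N_n(S_i)^{p-1}$, then control the moments of $N_n(S_i)$ via the moments of the return count $N_n(0)$. Your time-reversal/Markov decomposition of $N_n(S_i)$ into $1+\alpha_i+\beta_i$ and the factorial bound $\EE[R_m^l]\le l!\bigl(\sum_{j=0}^m\PP(S_j=0)\bigr)^l$ are exactly the standard ingredients, and the local-CLT estimate $\PP(S_j=0)=O(1/j)$ under (A1) in $d=2$ is indeed available. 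One minor quibble: the reversal actually yields $\alpha_i\stackrel{d}{=}R_{i-1}$ rather than $R_i$, but this is harmless for the estimate.
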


\begin{proof}
The statement can be found in \cite{GuPo13} (Proposition 2.3).
\end{proof}

We will also need the following lemma about the self-intersection local times of higher-dimensional random walks.

\begin{lem}\label{lem:intersecloctimes_d>2}
Let $\widetilde{S}$ be a random walk with a finite, non-singular covariance matrix in dimension $d \ge 3$,
and let $\widetilde{I}_n^{[p]}$ denote its $p$-fold self-intersection local time up to time $n$. 
Then, for all $p \ge 2$ and $k \in \NN$, there exists a $K>0$ such that
\begin{equation}\label{eq:intersecloctimes2}
\EE\left[ (\widetilde{I}^{[p]}_n)^k \right] \le K n^k \;\; \forall \; n \geq 2.
\end{equation}
\end{lem}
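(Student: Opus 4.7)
The plan is to reduce moment bounds on the $p$-fold self-intersection local time to uniform moment bounds on the local time at the random site occupied by the walk at a fixed time, exploiting the transience of $\widetilde{S}$ in dimensions $d \ge 3$.

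First I would rewrite $\widetilde{I}_n^{[p]}$ in a form that isolates one ``anchor'' index. For any $1 \le i \le n$, summing the indicator $\mathbf{1}_{\{\widetilde{S}_{i_1} = \cdots = \widetilde{S}_{i_{p-1}} = \widetilde{S}_i\}}$ over $(i_1, \ldots, i_{p-1}) \in \{1,\ldots,n\}^{p-1}$ produces exactly $N_n(\widetilde{S}_i)^{p-1}$, whence
$$\widetilde{I}_n^{[p]} = \sum_{i=1}^n N_n(\widetilde{S}_i)^{p-1}.$$
Raising to the $k$-th power, expanding, and applying the generalized H\"older inequality with $k$ equal conjugate exponents yields
$$\EE\bigl[(\widetilde{I}_n^{[p]})^k\bigr] \le \sum_{i_1, \ldots, i_k = 1}^n \prod_{j=1}^k \bigl(\EE[N_n(\widetilde{S}_{i_j})^{k(p-1)}]\bigr)^{1/k}.$$

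The crux is then to establish, for every $q \in \NN$, a uniform bound $\EE[N_n(\widetilde{S}_i)^q] \le C_q$ valid for all $1 \le i \le n$ and $n \ge 1$. I would decompose $N_n(\widetilde{S}_i) = 1 + U_i + V_i$, where $U_i$ counts visits to $\widetilde{S}_i$ strictly before time $i$ and $V_i$ counts visits strictly after. The Markov property at $i$ identifies $V_i$ with the number of returns to $0$ in the first $n-i$ steps of an independent copy of $\widetilde{S}$; since the hypothesis on the covariance together with the local central limit theorem gives $\sum_n \PP(\widetilde{S}_n = 0) < \infty$, the walk $\widetilde{S}$ is transient and $V_i$ is stochastically dominated by a $\mathrm{Geom}(1-\rho)$ random variable with $\rho < 1$, so it has moments of all orders uniformly bounded in $n$ and $i$. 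A time-reversal argument (the process $\widetilde{S}_{i-k} - \widetilde{S}_i$ for $k = 0, \ldots, i$ is a random walk with finite non-singular covariance, hence also transient) gives the analogous bound for $U_i$, and Minkowski's inequality combines the two.

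Putting the pieces together gives $\EE[(\widetilde{I}_n^{[p]})^k] \le n^k C_{k(p-1)}$, proving the lemma with $K := C_{k(p-1)}$. The main obstacle I foresee is the verification that the time-reversed increments still define a walk to which the transience argument applies; this is classical but requires one to note that reversing the order of the iid increments and negating them produces a random walk with the same covariance matrix, and hence transient in $d \ge 3$.
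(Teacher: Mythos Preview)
Your argument is correct. The identity $\widetilde{I}_n^{[p]}=\sum_{i=1}^n N_n(\widetilde{S}_i)^{p-1}$, the H\"older step, and the past/future decomposition of $N_n(\widetilde{S}_i)$ with time reversal all go through, and the transience of a walk with finite non-singular covariance in $d\ge 3$ indeed gives geometric tails for both $U_i$ and $V_i$; Minkowski then bounds the moments of their sum without any independence assumption.

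The paper's proof rests on exactly the same ingredient: the fact that in $d\ge 3$ the total local time $\widetilde{N}_\infty(0)$ is geometric, so all moments of $\widetilde{N}_n(0)$ are bounded uniformly in $n$. The paper then simply invokes the argument of Proposition~2.3 in \cite{GuPo13} with this input, whereas you give a self-contained reduction. The slight difference is that the paper (via \cite{GuPo13}) works with moments of $N_n(0)$ at the fixed origin, while you work with moments of $N_n(\widetilde{S}_i)$ at the random site, which is why you need the extra time-reversal step for the past visits $U_i$. Both routes are equally short once the geometric bound is in hand; yours has the advantage of being explicit and not relying on an external reference.

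One cosmetic remark: in your last paragraph you say ``reversing the order of the iid increments and negating them''; as you implicitly note, reversing the order of i.i.d.\ increments does not change their joint law, and the negation only changes the mean (still zero) while leaving the covariance matrix intact, so transience is immediate.
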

\begin{proof}
We can follow the proof of item (i) of Proposition 2.3 in \cite{GuPo13}, 
using the fact that, for all $k \in \NN$, $\sup_n \EE \left[ \widetilde{N}_n(0)^k\right] = \EE \left[ \widetilde{N}_{\infty}(0)^k\right]< \infty$
since $\widetilde{N}_\infty(0) := \sum_{n=1}^{\infty}\mathbf{1}_{\{\widetilde{S}_n=0\}}$ follows a geometric law with parameter $\PP(\widetilde{S}_n\neq 0 \;\forall\; n\ge 1 ) > 0$.
\end{proof}

\subsection{Proof of Proposition \ref{prop:CLTsubseq}}
\label{sec:proofpropCLTsubseq}

\subsubsection{Truncation}
\label{subsubsec:trunc}

Fix $\beta \in (0,1/4)$. For $n \ge 1$, set $b_n := n^\beta$, define $q^{(n)} \in \RR^{\NN^*}$ by
\begin{equation}\label{deftruncscene}
q^{(n)}_i := q_i \mathbf{1}_{\{|q_i| \le b_n \}} -\EE \left[ q_i \mathbf{1}_{\{|q_i| \le b_n \}} \right], \;\;\; i \geq 1,
\end{equation}
and $K^{(n)}$ by
\begin{equation}\label{deftruncpapa}
K^{(n)}_k := \sum_{1\leq i<j\leq k} q^{(n)}_{i} q^{(n)}_j \mathbf{1}_{\{S_i=S_j \}}, \;\;\; k \geq 2.
\end{equation}

The following proposition shows that, in order to prove Proposition~\ref{prop:CLTsubseq} for $K_n$,
it is enough to prove the same statement for $K^{(n)}_n$.

\begin{prop}\emph{(Comparison between $K$ and $K^{(n)}$)}\label{prop:TvsNT}
\text{}

For any $T>0$,
\begin{equation}\label{eq:propTvsNT}	
\lim_{n \to \infty} \sup_{2 \le k \le \lfloor \tau_n T \rfloor} \frac{\left|K_k-K^{(\tau_n)}_k\right|}{\sqrt{\tau_n \log \tau_n}}  = 0 \;\; \PP \text{-a.s.}
\end{equation}
\end{prop}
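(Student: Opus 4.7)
The plan is to exploit the martingale structure of $k\mapsto K_k-K_k^{(\tau_n)}$ conditional on $S$, combine it with the second-moment bound for self-intersections given by Lemma~\ref{lem:intersecloctimes}, and close the argument by a Borel--Cantelli step along the (exponentially growing) subsequence $(\tau_n)$.

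Fix $T>0$, write $M_n:=\lfloor\tau_n T\rfloor$ and set
\[
D_k := K_k-K_k^{(\tau_n)} = \sum_{1\le i<j\le k}\eta_{ij}\,\mathbf{1}_{\{S_i=S_j\}},\qquad \eta_{ij}:=q_iq_j-q_i^{(\tau_n)}q_j^{(\tau_n)}.
\]
First I would check that $(D_k)_k$ is, conditionally on $S$, an $(\mathcal{F}_k)$-martingale for $\mathcal{F}_k:=\sigma(q_1,\ldots,q_k)$: this follows because $D_k-D_{k-1}=\sum_{i<k}(q_iq_k-q_i^{(\tau_n)}q_k^{(\tau_n)})\mathbf{1}_{\{S_i=S_k\}}$, while $\EE[q_k]=\EE[q_k^{(\tau_n)}]=0$ and $q_k,q_k^{(\tau_n)}$ are independent of $\mathcal{F}_{k-1}$. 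Doob's $L^2$-maximal inequality then reduces matters to bounding $\EE[D_{M_n}^2]$.

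To estimate this second moment, I would expand
\[
\EE[D_{M_n}^2\mid S]=\sum_{\substack{i<j,\;i'<j'\\ \le M_n}}\EE[\eta_{ij}\eta_{i'j'}]\,\mathbf{1}_{\{S_i=S_j\}}\mathbf{1}_{\{S_{i'}=S_{j'}\}}.
\]
Since each $\eta_{ij}$ is centered and depends only on $(q_i,q_j)$, independence of the charges forces $\EE[\eta_{ij}\eta_{i'j'}]=0$ unless every element of $\{i,j,i',j'\}$ is repeated; the orderings $i<j,\,i'<j'$ then rule out the crossed pairing $i=j',\,j=i'$, leaving only the diagonal $i=i',\,j=j'$. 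A short expansion using $q_i-q_i^{(\tau_n)}=q_i\mathbf{1}_{\{|q_i|>b_{\tau_n}\}}+\EE[q_i\mathbf{1}_{\{|q_i|\le b_{\tau_n}\}}]$ together with (A2) yields $\EE[\eta_{ij}^2]\le C\,\delta_{\tau_n}$, with $\delta_{\tau_n}:=\EE[q_1^2\mathbf{1}_{\{|q_1|>b_{\tau_n}\}}]\le \EE[|q_1|^\gamma]\,b_{\tau_n}^{2-\gamma}=C\tau_n^{\beta(2-\gamma)}$. Taking expectations and applying Lemma~\ref{lem:intersecloctimes} with $p=2$ gives $\EE[D_{M_n}^2]\le C'\delta_{\tau_n}M_n\log M_n\le C''\tau_n^{1-\beta(\gamma-2)}\log\tau_n$.

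Combining Doob with Markov then delivers
\[
\PP\!\left(\sup_{k\le M_n}|D_k|>\varepsilon\sqrt{\tau_n\log\tau_n}\right)\le\frac{4\,\EE[D_{M_n}^2]}{\varepsilon^2\tau_n\log\tau_n}\le\frac{C''}{\varepsilon^2}\,e^{-\beta(\gamma-2)n^\alpha},
\]
which is summable in $n$ since $\gamma>2$ and $\beta>0$. Borel--Cantelli and the arbitrariness of $\varepsilon$ then yield \eqref{eq:propTvsNT}. The only slightly delicate step I anticipate is the pairing analysis: one must verify carefully that every configuration other than $i=i',\,j=j'$ either produces an isolated centered factor (hence vanishes in expectation) or is forbidden by the orderings, so that the second moment is really of order $\delta_{\tau_n}\EE[I_{M_n}]$ rather than $\EE[I_{M_n}]$ alone --- it is precisely this gain of a factor $\delta_{\tau_n}\le C\tau_n^{-\beta(\gamma-2)}$ that makes the Borel--Cantelli step along $(\tau_n)$ work.
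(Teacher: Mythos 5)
Your argument is correct and follows essentially the same route as the paper's: both exploit that $k\mapsto K_k-K_k^{(\tau_n)}$ is a (sum of) centered martingale(s) in the charges, apply Doob's $L^2$ maximal inequality, bound the second moment by a truncation tail estimate of order $b_{\tau_n}^{2-\gamma}$ times $\EE[I_{\lfloor\tau_nT\rfloor}]$ via Lemma~\ref{lem:intersecloctimes}, and conclude by Borel--Cantelli along $\tau_n$. The only cosmetic difference is that the paper first splits $q_iq_j-q_i^{(n)}q_j^{(n)}$ into three separate martingales before bounding each, whereas you compute the second moment of the single difference martingale directly via the (correct) diagonal-pairing analysis.
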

\begin{proof} 
Let
\begin{equation}\label{eq:propTvsNT1}
q^{(n)>}_{i} := q_i \mathbf{1}_{\{|q_i| > b_n\}} - \EE \left[ q_i \mathbf{1}_{\{|q_i| > b_n\}} \right]
\end{equation}
and note that, since $q_1$ is centered,
\begin{equation}\label{eq:propTvsNT2}
q_i q_j - q^{(n)}_iq^{(n)}_j = - \; q^{(n)>}_{i}q^{(n)>}_{j} + q_i q^{(n)>}_{j} + q^{(n)>}_{i} q_j.
\end{equation}
Write
\begin{equation}\label{eq:propTvsNT3}
K_k - K^{(n)}_k = - \; \mathcal{E}^{(n,1)}_k + \mathcal{E}^{(n,2)}_k + \mathcal{E}^{(n,3)}_k
\end{equation}
where
\begin{equation}\label{eq:propTvsNT4}
\mathcal{E}^{(n,1)}_k := \sum_{1 \le i < j \le k} q^{(n)>}_{i}q^{(n)>}_{j} \mathbf{1}_{\{S_i = S_j\}}
\end{equation}
and $\mathcal{E}^{(n,2)}_k$, $\mathcal{E}^{(n,2)}_k$ are defined analogously from the corresponding terms in \eqref{eq:propTvsNT2}.
Let us focus for the moment on $\mathcal{E}^{(n,1)}_k$. Note that it is a martingale under $\PP$.
Therefore, by Doob's maximal inequality,
\begin{align}\label{eq:propTvsNT5}
\EE \left[ \sup_{2 \le k \le \lfloor n T \rfloor} |\mathcal{E}^{(n,1)}_k|^2 \right] \le \EE \left[ |\mathcal{E}^{(n,1)}_{\lfloor nT \rfloor} |^2 \right]
\le \EE \left[ |q^{(n)>}_1 |^2 \right]^2 \EE \left[ I_{\lfloor n T \rfloor} \right].
\end{align}
Since
\begin{equation}\label{eq:propTvsNT6}
\EE \left[ |q^{(n)>}_1 |^2 \right] \le 2 \EE \left[ |q_1|^2 \mathbf{1}_{\{|q_1| > b_n\}}\right]
\le \frac{C}{b_n^{\gamma-2}},
\end{equation}
by \eqref{eq:propTvsNT4}--\eqref{eq:propTvsNT6} and Lemma~\ref{lem:intersecloctimes}(i) we get
\begin{equation}\label{eq:propTvsNT7}
\EE \left[\sup_{2 \le k \le \lfloor nT \rfloor} \frac{|\mathcal{E}^{(n,1)}_k|^2}{n \log n} \right] \le \frac{C}{n^{2\beta(\gamma-2)}} 
\end{equation}
which is summable along $\tau_n$ since $\gamma > 2$. Analogously, we can show a similar
inequality for $\mathcal{E}^{(n,2)}$ and $\mathcal{E}^{(n,3)}$ with the bound $C n^{-\beta(\gamma-2)}$
instead, which is also summable along $\tau_n$. The proof is concluded by applying the Borel-Cantelli lemma.
\end{proof}

\subsubsection{Decomposition of quenched moments}
\label{subsubsec:decomp_quenchmom}

From now on, we will work with the truncated and recentered version $K^{(n)}$ of the energy.
In fact, for convenience we will work with
\begin{equation}\label{e:notation_quenchedmoms}
2 K_k^{(n)} = \sum_{i \neq \ddot{\imath} \in [k]} q^{(n)}_{i} q^{(n)}_{\ddot{\imath}} \mathbf{1}_{\{ S_{i} = S_{\ddot{\imath}}\}}
\end{equation}
where $[t] := \{1, \ldots, \lfloor t \rfloor\}$.

Fix $\vec{p} = (p_1, \ldots, p_N) \in (\NN^*)^N$, and put $p := |\vec{p}|_1 = p_1 + \ldots + p_N$.
Since the $q^{(n)}_i$ are bounded, the quenched moments
\begin{equation}\label{defquenchedmoments}
m_n^{(\vec{p})} := 
\hspace{-3pt} \sum_{i^1_1 \neq \ddot{\imath}^1_1, \ldots, i^1_{p_1} \neq \ddot{\imath}^1_{p_1} \in [ n t_1 ]} \hspace{-3pt} \cdots \hspace{-3pt} 
\sum_{{i^N_1 \neq \ddot{\imath}^N_1, \ldots, i^N_{p_N} \neq \ddot{\imath}^N_{p_N} \in [n t_N]}} 
\; \prod_{k=1}^N\prod_{\ell=1}^{p_k} q^{(n)}_{i^k_\ell} q^{(n)}_{\ddot{\imath}^k_\ell} 
\; \PP\left( \bigcap_{k=1}^N\bigcap_{\ell=1}^{p_k} \{S_{i^k_\ell} = S_{\ddot{\imath}^k_{\ell}}\}\right)
\end{equation}
are all well defined and satisfy
\begin{equation}\label{m_arequenchedmoments}
m_n^{(\vec{p})} = \mathbb{E}\left[ \prod_{k=1}^N( 2\, K^{(n)}_{\lfloor n t_k \rfloor})^{p_k} \;\middle|\; q \right] \;\; \PP \text{-a.s.}
\end{equation}

We aim to prove that the $m_n^{(\vec{p})}$ when properly normalized converge a.s.\ along $\tau_n$ 
to the corresponding moments of a Gaussian process.
In order to do that, we will first show how they can be decomposed into sums of terms
that are easier to control.

In the following we will use the notation $\mathbf{\tilde{\textbf{\i}}} = (\tilde{\imath}^k_1, \ldots, \tilde{\imath}^k_{p_k})_{k=1}^N$, ${\scriptstyle \sim} \in \{\cdot, \cdot \cdot\}$,
and we will write $\mathbf{i} \neq \mathbf{\ddot{\textbf{\i}}}$ to mean that $\dot{\imath}^k_\ell \neq \ddot{\imath}^k_\ell$ for all $k \in [N]$ and $\ell \in [p_k]$.

Let
\begin{equation}\label{e:defindexset}
\mathcal{I}_{\vec{p}} := \big\{(k,\ell,{\scriptstyle \sim}) \colon\, k \in [N], \ell \in [p_k], {\scriptstyle \sim} \in \{\cdot, \cdot \cdot\} \big\}.
\end{equation}
Note that $|\mathcal{I}_{\vec{p}}| = 2p$.  For $Q \subset \mathcal{I}_{\vec{p}}$, let
\begin{equation}\label{e:defindexQ}
k_Q := \inf \{k \in [N] \colon\, (k,\ell,{\scriptstyle \sim}) \in Q \text{ for some } \ell \in [p_k], {\scriptstyle \sim} \in \{\cdot, \cdot \cdot\} \}
\end{equation}
and, for a collection $\mathcal{Q}$ of subsets of $\mathcal{I}_{\vec{p}}$, let
\begin{equation}\label{e:defspacealldif}
\mathcal{D}_n^{\mathcal{Q}} := \{b = (b_Q)_{Q \in \mathcal{Q}} \colon\, b_Q \in [n t_{k_Q}] \text{ and }b_Q \neq b_P \; \forall \; Q \neq P \in \mathcal{Q} \}.
\end{equation}

For a given pair $\mathbf{i} \neq \mathbf{\ddot{\textbf{\i}}}$, we define a graph structure on $\mathcal{I}_{\vec{p}}$ as follows.
We say that
\begin{equation}\label{e:defgraph1}
(k_1, \ell_1, {\scriptstyle \sim}) \neq (k_2, \ell_2, -) \in \mathcal{I}_{\vec{p}} 
\text{ are adjacent if and only if } \tilde{\imath}^{k_1}_{\ell_1} = \bar{\imath}^{k_2}_{\ell_2}.\\
\end{equation}
Let $\mathcal{P}$ be the resulting partition of $\mathcal{I}_{\vec{p}}$
into connected components according to the graph structure given above.
Then $\mathcal{P}$ belongs to
\begin{equation}\label{e:defspaceofpartitions}
\mathscr{P}_{\vec{p}} := \{\text{partitions } \mathcal{Q} \text{ of } \mathcal{I}_{\vec{p}} \colon\, |\{(k,\ell,\cdot), (k,\ell,\cdot \cdot)\} \cap Q | \le 1 \;\forall\; k \in [N], \ell \in [p_k] \text{ and } Q \in \mathcal{Q}\}.
\end{equation}
Define $a = (a_P)_{P \in \mathcal{P}} \in \mathcal{D}_n^{\mathcal{P}}$ by setting
\begin{equation}\label{e:defa}
a_P = \tilde{\imath}^k_\ell \; \text{ for any } (k,\ell,{\scriptstyle \sim}) \in P.
\end{equation}
Using $t_1 < \cdots < t_N$,
it is straightforward to verify that the map $(\mathbf{i}, \mathbf{\ddot{\textbf{\i}}}) \mapsto (\mathcal{P}, a)$ is a bijection.
Thus we obtain the decomposition
\begin{equation}\label{e:decomp1}
m_n^{(\vec{p})} = \sum_{\mathcal{P} \in \mathscr{P}_{\vec{p}}} \; \sum_{a \in \mathcal{D}_n^{\mathcal{P}}} \; \prod_{P \in \mathcal{P}} (q_{a_P}^{(n)})^{|P|} \,
\PP \left(\bigcap_{k=1}^N\bigcap_{\ell=1}^{p_k} \{S_{i^k_\ell} = S_{\ddot{\imath}^k_\ell}\} \right)
\end{equation}
where in the above $\mathbf{i}, \mathbf{\ddot{\textbf{\i}}}$ are seen as functions of $\mathcal{P}$ and $a$.

Next we define another graph structure, this time on $\mathcal{P}$, as follows.
We say that
\begin{equation}\label{e:defgraph2}
Q \neq P \in \mathcal{P} \text{ are adjacent if and only if } \; \exists\; k \in [N], \ell \in [p_k] \colon\, \{(k, \ell,\cdot), (k, \ell,\cdot \cdot)\} \subset P \cup Q.
\end{equation}
Let $\mathfrak{S}$ denote the partition of $\mathcal{P}$ into connected components according to the graph structure above.
We call $\mathfrak{S}$ the \emph{superpartition} of $\mathbf{i},\mathbf{\ddot{\textbf{\i}}}$.
Note that $|\mathcal{S}| \ge 2$ for all $\mathcal{S} \in \mathfrak{S}$.

We will now show that a consequence of the previous definitions is that
\begin{align}\label{1stconseq_graphstruc}
\bigcap_{k=1}^N\bigcap_{\ell=1}^{p_k} \{S_{i^k_\ell} = S_{\ddot{\imath}^k_{\ell}}\}
& = \bigcap_{\mathcal{S} \in \mathfrak{S}} \bigcap_{P, Q \in \mathcal{S}} \{S_{a_P} = S_{a_Q}\} \nonumber\\
& = \bigcap_{\mathcal{S} \in \mathfrak{S}} \bigcup_{x \in \ZZ^2}\bigcap_{P \in \mathcal{S}} \{S_{a_P} = x\}.
\end{align}

To see this, first note that
\begin{align}
\bigcap_{k=1}^N\bigcap_{\ell =1}^{p_k} \{S_{i^k_\ell}=S_{\ddot{\imath}^k_\ell}\} 
& = \bigcap_{\mathcal{S} \in \mathfrak{S}} \bigcap_{P \in \mathcal{S}} \; \bigcap_{k,\ell \colon (k,\ell, \cdot) \in P} \{S_{a_P} = S_{\ddot{\imath}^k_\ell}\} \nonumber\\
& = \bigcap_{\mathcal{S} \in \mathfrak{S}} \bigcap_{P \in \mathcal{S}} \; \bigcap_{Q \text{ adj.\ } P} \{S_{a_P} = S_{a_Q}\}.
\end{align}
Thus \eqref{1stconseq_graphstruc} will follow once we show that, for all $\mathcal{S} \in \mathfrak{S}$,
\begin{equation}\label{e:claim_conseq}
E_{\mathcal{S}} := \bigcap_{P \in \mathcal{S}} \bigcap_{Q \text{ adj.\ }P}\{S_{a_P}=S_{a_Q}\} \subset \bigcap_{P,Q \in \mathcal{S}} \{S_{a_P} = S_{a_Q}\}. 
\end{equation}
Indeed, for $P,Q \in \mathcal{S}$, there exist $Q_0, \ldots, Q_J \in \mathcal{S}$ such that $Q_0 = P$, $Q_J=Q$ and
$Q_m$ is adjacent to $Q_{m-1}$ for all $m \in [J]$.
We will prove that $E_\mathcal{S} \subset \{S_{a_P} = S_{a_{Q_m}}\}$ for all $m \in [J]$ by induction on $m$.
Since $E_{\mathcal{S}} \subset \{S_{a_{Q_m}} = S_{a_{Q_{m-1}}}\}$ by definition,
the case $m=1$ is covered and, 
supposing that $E_{\mathcal{S}} \subset \{S_{a_{Q_{m-1}}} = S_{a_P}\}$,
we get $E_{\mathcal{S}} \subset \{S_{a_{Q_m}} = S_{a_{Q_{m-1}}}\} \cap \{S_{a_{Q_{m-1}}} = S_{a_P}\} \subset \{S_{a_{Q_{m}}} = S_{a_P}\}$,
proving the induction step. Hence \eqref{e:claim_conseq} is verified, and \eqref{1stconseq_graphstruc} follows.

Thus we see that we may decompose $m_n^{(\vec{p})}$ in the following manner:
\begin{equation}\label{decompmomquenched2}
m_n^{(\vec{p})} = \sum_{\mathcal{P} \in \mathscr{P}_{\vec{p}}} \; m_n^{(\vec{p})}(\mathcal{P})
\end{equation}
where
\begin{equation}\label{defdecomp2}
m_n^{(\vec{p})}(\mathcal{P}) := \sum_{a \in \mathcal{D}_n^{\mathcal{P}}} \; \prod_{P \in \mathcal{P}} (q_{a_P}^{(n)})^{|P|} \,
\PP \left(\bigcap_{\mathcal{S} \in \mathfrak{S}} \bigcap_{P,Q \in \mathcal{S}} \{S_{a_P} = S_{a_Q}\} \right).
\end{equation}

Next, using the identity
\begin{equation}
\prod_{P \in \mathcal{P}} (c_P+ d_P) = \sum_{\mathcal{A} \subset \mathcal{P}} \prod_{P \in \mathcal{A}} c_P \prod_{P \notin \mathcal{A}} d_P,
\end{equation}
we see that we may further decompose $m_n^{(\vec{p})}(\mathcal{P})$ as
\begin{equation}\label{decompmomquenched3}
m_n^{(\vec{p})}(\mathcal{P}) = \sum_{\mathcal{A} \subset\mathcal{P}} m_n^{(\vec{p})}(\mathcal{P},\mathcal{A}),
\end{equation}
where $m_n^{(\vec{p})}(\mathcal{P},\mathcal{A}) :=$
\begin{equation}\label{defdecomp3}
\sum_{a \in \mathcal{D}_n^{\mathcal{P}}} \; \prod_{P \in \mathcal{A}} \mathbb{E} \left[ (q^{(n)}_{a_P})^{|P|} \right]
\prod_{P \notin \mathcal{A}} \left\{ (q^{(n)}_{a_P})^{|P|} - \mathbb{E} \left[ (q^{(n)}_{a_P})^{|P|} \right]\right\} 
\PP \left(\bigcap_{\mathcal{S} \in \mathfrak{S}} \bigcap_{P,Q \in \mathcal{S}} \{S_{a_P} = S_{a_Q}\} \right).
\end{equation}

Let 
\begin{equation}
\mathcal{N} := \{P \in \mathcal{P} \colon\, |P| > 1\}.
\end{equation}
Since $m_n^{(\vec{p})}(\mathcal{P},\mathcal{A}) = 0$ if $\mathcal{A} \cap \mathcal{N}^c \neq \emptyset$
and $m_n^{(\vec{p})}(\mathcal{P},\mathcal{P}) = \EE\left[ m_n^{(\vec{p})}(\mathcal{P}) \right]$,
\begin{equation}\label{decompmomquenched4}
m_n^{(\vec{p})}(\mathcal{P}) - \EE \left[ m_n^{(\vec{p})}(\mathcal{P}) \right] 
= \sum_{\mathcal{A} \subset \mathcal{N} \colon \mathcal{A}^c \neq \emptyset} m_n^{(\vec{p})}(\mathcal{P},\mathcal{A}).
\end{equation}
Moreover, when $\mathcal{A}^c \neq \emptyset$ we may write
\begin{equation}\label{rep_terms}
m_n^{(\vec{p})}(\mathcal{P},\mathcal{A}) = \sum_{ 
(a_P)_{P \notin \mathcal{A}} \in \mathcal{D}^{\mathcal{A}^c}_n
} \; \prod_{P \notin \mathcal{A}} 
\left\{ (q^{(n)}_{a_P})^{|P|} - \EE\left[(q^{(n)}_{a_P})^{|P|}\right] \right\} \mathcal{W}_n\big((a_P)_{P \notin \mathcal{A}}, \mathcal{P}, \mathcal{A}\big),
\end{equation}
where
\begin{equation}\label{defWA}
\mathcal{W}_n\big((a_P)_{P \notin \mathcal{A}}, \mathcal{P}, \mathcal{A}\big) 
:= \sum_{ 
(a_P)_{P \in \mathcal{A}} \in \mathcal{D}^{\mathcal{A}}_n
} \; \prod_{P \in \mathcal{A}} \mathbb{E}\left[(q^{(n)}_{a_P})^{|P|}\right] 
\PP \left(\bigcap_{\mathcal{S} \in \mathfrak{S}} \bigcap_{P,Q \in \mathcal{S}} \{S_{a_P} = S_{a_Q}\} \right) \mathbf{1}_{\{ a \in \mathcal{D}_n^{\mathcal{P}} \}}.
\end{equation}

\subsubsection{Analysis of the terms}
\label{subsubsec:analysis_terms}

We begin with the terms in which $\mathcal{A}^c = \emptyset$, i.e., 
the ones corresponding to $\mathbb{E} [m_n^{(\vec{p})}(\mathcal{P})]$.

\begin{prop}\label{prop:caseAc=0}
For all $\vec{p} \in (\NN^*)^N$, there exists a constant $K \in (0,\infty)$ such that
\begin{equation}\label{eq:caseAc=0}
\left| \mathbb{E} \left[ m_n^{(\vec{p})}(\mathcal{P})\right] \right| \le K (n \log n)^{p/2} \;\; \forall \; n \ge 2,
\end{equation}
where $p := |\vec{p}|_1 = p_1 + \cdots + p_N$.
\end{prop}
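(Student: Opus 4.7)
The plan is to bound each factor of $m_n^{(\vec{p})}(\mathcal{P})$ -- i.e.\ the specialization of (\ref{defdecomp3}) to $\mathcal{A}=\mathcal{P}$ -- and then sum over $a \in \mathcal{D}_n^{\mathcal{P}}$.

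The first step is to exploit centering: since $\mathbb{E}[q^{(n)}_i]=0$, any block $P\in\mathcal{P}$ with $|P|=1$ kills the whole term, so I may assume every $P$ satisfies $|P|\ge 2$. The relation $\sum_P |P|=2p$ then forces $|\mathcal{P}|\le p$, and since each superblock $\mathcal{S}\in\mathfrak{S}$ contains at least two elements (as noted just after (\ref{e:defgraph2})), one also has $|\mathfrak{S}|\le|\mathcal{P}|/2$. These two combinatorial inequalities are what will ultimately drive the whole estimate.

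Next, I would treat the charges and the walk separately. For the charges, $|q^{(n)}_i|\le 2b_n$ and $\mathbb{E}[(q^{(n)}_i)^2]$ is bounded, so $|\mathbb{E}[(q^{(n)}_{a_P})^{|P|}]|\le C b_n^{|P|-2}$ whenever $|P|\ge 2$, which yields an overall charge contribution bounded by $C b_n^{2p-2|\mathcal{P}|}$. For the walk, I would use the superpartition identity (\ref{1stconseq_graphstruc}): after enlarging the time ranges to $\{1,\ldots,\lfloor nt_N\rfloor\}$ and dropping the cross-superclass distinctness encoded in $\mathcal{D}_n^{\mathcal{P}}$ (both legitimate upper bounds), the inner sum inside each $\mathcal{S}$ counts distinct ordered $|\mathcal{S}|$-tuples of coincidence times for $S$, hence is dominated by $I^{[|\mathcal{S}|]}_{\lfloor nt_N\rfloor}$; so
\begin{equation*}
\sum_{a \in \mathcal{D}_n^{\mathcal{P}}} \PP\Bigl(\bigcap_{\mathcal{S} \in \mathfrak{S}}\bigcap_{P,Q \in \mathcal{S}}\{S_{a_P}=S_{a_Q}\}\Bigr) \le \mathbb{E}\Bigl[\prod_{\mathcal{S} \in \mathfrak{S}} I^{[|\mathcal{S}|]}_{\lfloor nt_N\rfloor}\Bigr].
\end{equation*}
Applying H\"older's inequality with equal exponents $|\mathfrak{S}|$ and then Lemma~\ref{lem:intersecloctimes} bounds the right-hand side by $C\, n^{|\mathfrak{S}|}(\log n)^{\sum_{\mathcal{S}}(|\mathcal{S}|-1)} = C\, n^{|\mathfrak{S}|}(\log n)^{|\mathcal{P}|-|\mathfrak{S}|}$.

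Combining the two pieces with $b_n=n^\beta$ gives
\begin{equation*}
\bigl|\mathbb{E}[m_n^{(\vec{p})}(\mathcal{P})]\bigr| \le C\, n^{2\beta(p-|\mathcal{P}|)+|\mathfrak{S}|}(\log n)^{|\mathcal{P}|-|\mathfrak{S}|},
\end{equation*}
and the last step is to check that both exponents are at most $p/2$. The logarithmic one is immediate from $|\mathcal{P}|-|\mathfrak{S}|\le|\mathcal{P}|/2\le p/2$. The polynomial one is where the assumption $\beta<1/4$ enters: substituting $|\mathfrak{S}|\le|\mathcal{P}|/2$ turns it into $2\beta p+(1/2-2\beta)|\mathcal{P}|$, which is non-decreasing in $|\mathcal{P}|$ (since $1/2-2\beta>0$) and equals exactly $p/2$ at $|\mathcal{P}|=p$. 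I expect the main obstacle to be precisely this combinatorial/exponent bookkeeping: the number of blocks $|\mathcal{P}|$, the number of superblocks $|\mathfrak{S}|$, the truncation exponent $\beta$, and the extra logarithm produced by Lemma~\ref{lem:intersecloctimes} must balance simultaneously, and they only just balance to $(n\log n)^{p/2}$ because of the strict inequality $\beta<1/4$ and the fact that every superclass has size at least two.
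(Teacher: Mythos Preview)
Your proposal is correct and follows essentially the same approach as the paper: the same centering reduction to $|P|\ge 2$, the same charge bound $\prod_P|\mathbb{E}[(q^{(n)}_1)^{|P|}]|\le C b_n^{2(p-|\mathcal{P}|)}$, the same walk bound via H\"older and Lemma~\ref{lem:intersecloctimes} yielding $Cn^{|\mathfrak{S}|}(\log n)^{|\mathcal{P}|-|\mathfrak{S}|}$, and the same combinatorial inputs $|\mathcal{P}|\le p$, $|\mathfrak{S}|\le|\mathcal{P}|/2$, $\beta<1/4$. The only cosmetic difference is that the paper concludes by splitting into three cases on $(|\mathcal{P}|,|\mathfrak{S}|)$, whereas you verify the exponent inequality $2\beta(p-|\mathcal{P}|)+|\mathfrak{S}|\le p/2$ directly via monotonicity in $|\mathcal{P}|$; both arguments are equivalent.
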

\begin{proof}

Integrating \eqref{defdecomp2} we get
\begin{equation}\label{analysis_annealed1}
\begin{aligned}
\EE \left[ m_n^{(\vec{p})}(\mathcal{P}) \right]
& = \prod_{P \in \mathcal{P}} \EE \left [\left(q^{(n)}_1\right)^{|P|} \right] \sum_{a \in \mathcal{D}_n^{\mathcal{P}}} 
\PP \left(\bigcap_{\mathcal{S} \in \mathfrak{S}} \bigcap_{P,Q \in \mathcal{S}} \{S_{a_P} = S_{a_Q}\} \right).
\end{aligned}
\end{equation}
We may suppose that $|P| \ge 2$ for all $P \in \mathcal{P}$ since otherwise
$\EE \left[ m_n^{(\vec{p})}(\mathcal{P}) \right] = 0$. In particular, $|\mathcal{P}| \le p$.
Estimating
\begin{equation}\label{eq:estsceneTR}
\EE\left[|q^{(n)}_1|^{|P|} \right] \le 2^{|P|} \EE\left[|q_1 \mathbf{1}_{\{|q_1|\le b_n\}}|^{|P|} \right]
\le 2^{|P|} \EE\left[|q_1 \mathbf{1}_{\{|q_1|\le b_n\}}|^{|P|-2} q_1^2 \right]
\le 2^{|P|}b_n^{|P|-2},
\end{equation}
we see that the absolute value of the first term with the product in \eqref{analysis_annealed1} is at most $C b_n^{2(p-|\mathcal{P}|)}$.
On the other hand, the second term is smaller than
\begin{align}\label{eq:estim_othercases}
\sum_{a \in [nt_N]^\mathcal{P}} \EE\left[ \prod_{\mathcal{S} \in \mathfrak{S}} \sum_{x \in \ZZ^2} \prod_{P \in \mathcal{S}} \mathbf{1}_{\{S_{a_P} = x\}} \right]  
& = \EE \left[\prod_{ \mathcal{S} \in \mathfrak{S}} \sum_{x \in \ZZ^2} N_{\lfloor n t_N \rfloor}^{|\mathcal{S}|}(x)\right]  \nonumber\\
= \EE \left[\prod_{\mathcal{S} \in \mathfrak{S}} I_{\lfloor n t_N \rfloor}^{[|\mathcal{S}|]}\right] & \le Cn^{|\mathfrak{S}|} (\log n)^{|\mathcal{P}| - |\mathfrak{S}|}
\end{align}
where we used H\"older's inequality and Lemma~\ref{lem:intersecloctimes}(i).

Combining \eqref{analysis_annealed1}--\eqref{eq:estim_othercases} we obtain
\begin{align}\label{estim_annealed}
\left| \EE \left[ m_n^{(\vec{p})}(\mathcal{P}) \right] \right|
 \le C b_n^{2(p-|\mathcal{P}|)} n^{|\mathfrak{S}|} (\log n)^{|\mathcal{P}|-|\mathfrak{S}|}.
\end{align}
We now split into different cases. Note that $|\mathfrak{S}| \le |\mathcal{P}|/2 \le p/2$. 
If $|\mathcal{P}|=p$ and $|\mathfrak{S}| = p/2$, then
\eqref{eq:caseAc=0} holds by \eqref{estim_annealed}.
If $|\mathcal{P}|=p$ and $|\mathfrak{S}| < p/2$, then \eqref{estim_annealed} divided by $(n \log n)^{\frac{p}{2}}$
goes to zero as $n\to\infty$. Lastly, if $|\mathcal{P}|<p$, then
\begin{equation}\label{estim_annealed_last}
(n \log n)^{-\frac{p}{2}} \left| \EE \left[ m_n^{(\vec{p})}(\mathcal{P}) \right] \right|
 \le C n^{-\frac{p-|\mathcal{P}|}{2}\left(1-4\beta\right)} (\log n)^{\frac{p}{2}}
\end{equation}
which goes to zero as $n\to\infty$ since $\beta < 1/4$.
\end{proof}

The rest of the analysis consists in showing that all other terms
with $\mathcal{A}^c \neq \emptyset$ converge to zero a.s.\ along $\tau_n$ when normalized.

\begin{prop}\label{prop:caseAc_notempty}
For any fixed choice of $\vec{p}$ and $\mathcal{P}$, if $\mathcal{A}^c \neq \emptyset$ then
\begin{equation}\label{eq:caseAc_notempty}
\lim_{n \to \infty} \frac{m_{\tau_n}^{(\vec{p})}(\mathcal{P},\mathcal{A})}{(\tau_n \log \tau_n)^{p/2}} = 0 \;\; \mathbb{P} \text{-a.s.}
\end{equation}
\end{prop}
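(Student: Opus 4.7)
The plan is a Chebyshev / Borel--Cantelli argument along $\tau_n$ based on a second-moment bound. For brevity let $\eta_P(i) := (q^{(n)}_i)^{|P|} - \EE[(q^{(n)}_i)^{|P|}]$ for $P \notin \mathcal{A}$. Starting from \eqref{rep_terms} I would expand
\[
\EE\left[(m_n^{(\vec{p})}(\mathcal{P},\mathcal{A}))^{2}\right]
= \sum_{a, a' \in \mathcal{D}^{\mathcal{A}^c}_n} \EE\left[\prod_{P \notin \mathcal{A}} \eta_P(a_P)\,\eta_P(a'_P)\right]
\mathcal{W}_n(a,\mathcal{P},\mathcal{A})\, \mathcal{W}_n(a',\mathcal{P},\mathcal{A}),
\]
and exploit that the $\eta_P(i)$'s are centered and independent across different $i$. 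The inner expectation vanishes unless each index among the $2|\mathcal{A}^c|$ free indices $(a_P, a'_P)_{P \notin \mathcal{A}}$ is repeated at least twice; in particular, at most $|\mathcal{A}^c|$ distinct indices can appear, which is strictly fewer than the naive $2|\mathcal{A}^c|$ precisely because $\mathcal{A}^c \neq \emptyset$.

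For the surviving configurations I would bound the charge factor using $|q^{(n)}_1| \le 2 b_n$ and $\EE[(q^{(n)}_1)^2] \le 1$ as in \eqref{eq:estsceneTR}, obtaining a factor at most $C\, b_n^{2 \sum_{P \notin \mathcal{A}} (|P|-1)}$. For the walk factor, using that $\mathcal{W}_n$ encodes a probability of coincidences of the form $\bigcap_{\mathcal{S} \in \mathfrak{S}} \bigcap_{P,Q \in \mathcal{S}} \{S_{a_P} = S_{a_Q}\}$, I would merge the two copies according to each pairing, rewrite the resulting double sum as the expectation of a product of self-intersection local times of $S$, and invoke Lemma~\ref{lem:intersecloctimes}(i) together with H\"older's inequality to bound it by $C\, n^{|\mathfrak{S}| + |\mathfrak{S}'| - \Delta}(\log n)^{O(1)}$, where $\Delta \ge 1$ is the strict reduction in the number of free walk indices caused by at least one $\mathcal{A}^c$-pairing.

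Combining the two bounds I expect to get, for some $\delta = \delta(\beta) > 0$,
\[
\frac{\EE\left[(m_n^{(\vec{p})}(\mathcal{P},\mathcal{A}))^{2}\right]}{(n \log n)^p} \le C\, n^{-\delta}\, (\log n)^{C'},
\]
the exponent staying negative because $\beta < 1/4$ forces the $b_n^{2\beta \cdot O(1)} = n^{O(\beta)}$ loss from high-order truncated moments to be strictly beaten by the $n^{-\Delta}$ gain from pairing. Chebyshev's inequality then gives
\[
\PP\left(|m^{(\vec{p})}_{\tau_n}(\mathcal{P},\mathcal{A})| > \varepsilon (\tau_n \log \tau_n)^{p/2}\right) \le \frac{C}{\varepsilon^{2}}\, \tau_n^{-\delta}\, (\log \tau_n)^{C'},
\]
which is summable in $n$ because $\tau_n \ge \exp n^\alpha$ grows stretched-exponentially; Borel--Cantelli then yields the claim.

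The hard part will be the combinatorial bookkeeping in the first two steps: one must simultaneously enumerate the pairings together with the graph structures given by $\mathcal{P}$ and the superpartitions $\mathfrak{S}, \mathfrak{S}'$ of both copies, and verify that the reduction $\Delta$ of free walk indices caused by pairing more than compensates the truncation factor $n^{O(\beta)}$, uniformly over all these combinatorial configurations. The hypothesis $\mathcal{A}^c \neq \emptyset$ enters exactly to guarantee $\Delta \ge 1$, while $\beta < 1/4$ is invoked precisely to keep the truncation correction subcritical.
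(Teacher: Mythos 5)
Your overall strategy (a moment bound plus Borel--Cantelli along $\tau_n$) is the paper's, but the quantitative claim at its core is wrong: it is not true that every surviving pairing yields a gain of a positive power of $n$. Consider the simplest instance $p=1$, $\mathcal{A}=\emptyset$, $\mathcal{A}^c=\{P_1,P_2\}$, so that $m_n^{(\vec p)}(\mathcal{P},\mathcal{A})=\sum_{i\neq j\le n} q_i^{(n)}q_j^{(n)}\PP(S_i=S_j)$. The only surviving configurations identify $\{i,j\}$ with $\{i',j'\}$, and the second moment is of order $\sum_{i\neq j\le n}\PP(S_{|i-j|}=0)^2\asymp\sum_{i\neq j\le n}|i-j|^{-2}\asymp n$, whereas $(n\log n)^p=n\log n$. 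The reduction from four free indices to two buys only a factor $(\log n)^{-1}$, not $n^{-\Delta}$ with $\Delta\ge1$: the power of $n$ is governed by the number of connected components of the coincidence structure (via expectations of products of self-intersection local times), and pairing indices across the two copies does not decrease that count. This is precisely the regime the paper isolates as $t=0$ in \eqref{deftrou}, where the bound \eqref{summary_estBn} is only $C(\log n)^{-|\mathcal{A}^c|/2}$ after normalization.

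Consequently your Chebyshev step breaks down. Along $\tau_n=\lceil\exp n^\alpha\rceil$ a bound $(\log\tau_n)^{-|\mathcal{A}^c|/2}=n^{-\alpha|\mathcal{A}^c|/2}$ is summable only if $\alpha|\mathcal{A}^c|/2>1$; this is why the paper requires $\alpha>1/2$ (which disposes of $|\mathcal{A}^c|\ge4$), and why for $|\mathcal{A}^c|=2$ the second moment is insufficient for any $\alpha<1$. The paper closes that last case by first decoupling (de la Pe\~na--Montgomery-Smith) so that the two remaining charge factors come from independent copies of $q$, then viewing the resulting bilinear form as a martingale and applying Burkholder and Marcinkiewicz--Zygmund to obtain a $\gamma$-th moment bound of order $(\log n)^{-\gamma/2}$, summable since $\alpha>2/\gamma$. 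This is exactly where the hypothesis $\EE[|q_1|^\gamma]<\infty$ with $\gamma>2$ is used; your proposal never invokes it and cannot conclude in this case. Your treatment of the configurations with a genuine polynomial gain (truncation loss $b_n^{O(1)}$ beaten by a power of $n$, using $\beta<1/4$) does match the paper's Case~1.
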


Before we proceed to the proof, we need to introduce a decoupling inequality, due to de la Pe\~na and Montgomery-Smith, 
that will be important for us.

For fixed $\mathcal{A} \neq \mathcal{P}$, 
let $(q^{(n,P)})_{P \notin \mathcal{A}}$ be $|\mathcal{A}^c|$ independent
copies of $q^{(n)}$ and put
\begin{equation}\label{e:defmhat}
\widehat{m}_n^{(\vec{p})}(\mathcal{P}, \mathcal{A}) :=
\sum_{
(a_P)_{P \notin \mathcal{A}} \in \mathcal{D}^{\mathcal{A}^c}_n
} \; \prod_{P \notin \mathcal{A}} 
\left\{ (q^{(n,P)}_{a_P})^{|P|} - \EE\left[(q^{(n)}_{a_P})^{|P|}\right] \right\} \mathcal{W}_n\big((a_P)_{P \notin \mathcal{A}}, \mathcal{P}, \mathcal{A}\big),
\end{equation}
i.e., analogously to \eqref{rep_terms} but with independent copies of $q$ for different $P$.
Then the main theorem in \cite{dlPeMo95} implies that 
there exists a constant $C > 0$ depending on $|\mathcal{A}^c|$ only such that
\begin{equation}\label{e:ineg_decoup}
\PP \left( |m_n^{(\vec{p})}(\mathcal{P}, \mathcal{A})| > u \right)
\le C\, \PP \left( |\widehat{m}_n^{(\vec{p})}(\mathcal{P}, \mathcal{A})| > u/C \right)
\end{equation}
for all $u > 0$.
In particular, we can bound the probability in the l.h.s.\ of \eqref{e:ineg_decoup}
using Markov's inequality and the fact that
\begin{equation}\label{eq:var_est_mA}
\left\|\widehat{m}_n^{(\vec{p})}(\mathcal{P},\mathcal{A})\right\|_2^2 
= \prod_{P \notin \mathcal{A}} \left\| (q^{(n)}_1)^{|P|} - \mathbb{E}\left[(q^{(n)}_1)^{|P|} \right] \right \|_2^2 
\sum_{
(a_P)_{P \notin \mathcal{A}} \in \mathcal{D}^{\mathcal{A}^c}_n
} \mathcal{W}^2_n\big((a_P)_{P \notin \mathcal{A}}, \mathcal{P}, \mathcal{A}\big).
\end{equation}

\begin{proof}[Proof of Proposition~\ref{prop:caseAc_notempty}]
We may suppose that $\mathcal{A} \subset \mathcal{N}$.
Extending the sums in \eqref{defWA} and \eqref{eq:var_est_mA} to $[n t_N]^\mathcal{A}$, respec., $[n t_N]^{\mathcal{A}^c}$,
we may estimate
\begin{equation}\label{estimate_terms_A}
\left\| \widehat{m}_n^{(\vec{p})}(\mathcal{P},\mathcal{A})\right\|_2^2 \le \\
 \prod_{P \notin \mathcal{A}} \left\| (q^{(n)}_1)^{|P|} - \mathbb{E} \left[(q^{(n)}_1)^{|P|} \right]\right\|_2^2 \prod_{P \in \mathcal{A}} \mathbb{E}\left[ (q^{(n)}_1)^{|P|} \right]^2 B_n(\mathcal{P},\mathcal{A})
\end{equation}
where
\begin{align}\label{defBn}
B_n(\mathcal{P},\mathcal{A}) := & \sum_{ 
(a_P)_{P \notin \mathcal{A}} \in [ n t_N]^{\mathcal{A}^c}
} \left\{ \sum_{
(a_P)_{P \in \mathcal{A}} \in [n t_N]^\mathcal{A}
}\EE\left[ \prod_{\mathcal{S} \in \mathfrak{S}} \sum_{x \in \ZZ^2} \prod_{P \in \mathcal{S}} \{S_{a_P} = x\} \right] \right\}^2 \nonumber \\
= & \sum_{
(a_P)_{P \notin \mathcal{A}} \in [ n t_N]^{\mathcal{A}^c}
} \EE\left[ \prod_{\mathcal{S} \in \mathfrak{S}} \sum_{x \in \ZZ^2} N_{\lfloor n t_N \rfloor}^{|\mathcal{S} \cap \mathcal{A}|}(x) \prod_{P \in \mathcal{S}\cap \mathcal{A}^c} 
\mathbf{1}_{\{S_{a_P} = x\}} \right]^2.
\end{align}
We proceed to bound $B_n(\mathcal{P},\mathcal{A})$. 
Denoting by $\widehat{N}_n(x)$ the local times of an independent copy $\widehat{S}$ of $S$, 
and by $\widetilde{N}_n(x,y)$ the local times of the $4$-dimensional random walk $\widetilde{S}_n = (S_n,\widehat{S}_n)$, 
we can rewrite \eqref{defBn} as
\begin{align}\label{est1}
& \sum_{
(a_P)_{P \notin \mathcal{A}} \in [ n t_N]^{\mathcal{A}^c}
} \EE^{\otimes 2}\left[ \prod_{\mathcal{S} \in \mathfrak{S}} 
\sum_{x,y \in \ZZ^2} \left(N_{\lfloor n t_N \rfloor}(x)\widehat{N}_{\lfloor n t_N \rfloor}(y)\right)^{|\mathcal{S}\cap \mathcal{A}|} \prod_{P \in \mathcal{S}\cap \mathcal{A}^c} \mathbf{1}_{\{S_{a_P} = x,\widehat{S}_{a_P}=y\}} \right] \nonumber\\
& = \EE^{\otimes 2} \left[ \prod_{\mathcal{S} \in \mathfrak{S}} \sum_{x,y \in \ZZ^2} \left(N_{\lfloor n t_N \rfloor}(x)\widehat{N}_{\lfloor n t_N \rfloor}(y)\right)^{|\mathcal{S}\cap \mathcal{A}|} \widetilde{N}_{\lfloor n t_N \rfloor}^{|\mathcal{S}\cap \mathcal{A}^c|}(x,y) \right] \nonumber\\
& = \EE^{\otimes 2} \left[ \prod_{\mathcal{S} \subset \mathcal{A}} I_{\lfloor n t_N \rfloor}^{[|\mathcal{S}|]} \widehat{I}_{\lfloor n t_N \rfloor}^{[|\mathcal{S}|]} \prod_{\mathcal{S} \subset \mathcal{A}^c} \widetilde{I}_{\lfloor n t_N \rfloor}^{[|\mathcal{S}|]} \prod_{\stackrel{\mathcal{S} \colon }{\scriptscriptstyle \emptyset \neq \mathcal{S}\cap \mathcal{A} \neq \mathcal{S} }} \sum_{x,y \in \ZZ^2} \left(N_{\lfloor n t_N \rfloor}(x)\widehat{N}_{\lfloor n t_N \rfloor}(y)\right)^{|\mathcal{S}\cap \mathcal{A}|} \widetilde{N}_{\lfloor n t_N \rfloor}^{|\mathcal{S}\cap \mathcal{A}^c|}(x,y) \right] \nonumber\\
& \le \EE^{\otimes 2} \left[ \prod_{\mathcal{S} \subset \mathcal{A}} I_{\lfloor n t_N \rfloor}^{[|\mathcal{S}|]} \widehat{I}_{\lfloor n t_N \rfloor}^{[|\mathcal{S}|]} \prod_{\mathcal{S} \subset \mathcal{A}^c} \widetilde{I}_{\lfloor n t_N \rfloor}^{[|\mathcal{S}|]} \prod_{\mathcal{S} \colon \emptyset \neq \mathcal{S}\cap \mathcal{A} \neq \mathcal{S}} (N_{\lfloor n t_N \rfloor}^* \widehat{N}^*_{\lfloor n t_N \rfloor})^{|\mathcal{S}\cap \mathcal{A}|} \, \widetilde{I}_{\lfloor n t_N \rfloor}^{[|\mathcal{S}\cap \mathcal{A}^c|]} \right].
\end{align}
where $\widehat{I}_n^{[k]}$, $\widetilde{I}_n^{[k]}$ are the analogues of $I_n^{[k]}$ for the corresponding random walks,
and $\widehat{N}_n^* = \sup_x \widehat{N}_n(x)$.
Using H\"older's inequality, Lemmas~\ref{lemma:avmaxloctimes}(i), \ref{lem:intersecloctimes}(i) and \ref{lem:intersecloctimes_d>2}, and the fact that $\mathfrak{S}$ is a partition, we see that \eqref{est1} is at most
\begin{equation}\label{est2}
C n^{|\mathfrak{S}| + |\{\mathcal{S} \colon \mathcal{S}\subset \mathcal{A}\}|} \left(\log n \right)^{2 \left( |\mathcal{A}| + \sum_{\mathcal{S} \colon \emptyset \neq \mathcal{S}\cap \mathcal{A} \neq \mathcal{S}} |\mathcal{S}\cap \mathcal{A}| - |\{\mathcal{S} \colon \mathcal{S} \subset \mathcal{A} \}| \right)} .
\end{equation}

Now we note that $|\mathfrak{S}| \le \lfloor |\mathcal{P}|/2 \rfloor$, $|\{\mathcal{S} \colon \mathcal{S} \subset \mathcal{A} \}| \le \lfloor |\mathcal{A}|/2 \rfloor$ and $2|\mathcal{A}| + |\mathcal{A}^c| \le 2p$.
Therefore,
\begin{equation}\label{deftrou}
t := p - |\mathfrak{S}| - |\{\mathcal{S} \colon \mathcal{S} \subset \mathcal{A}\}| \ge 0
\end{equation}
and there is equality if and only if
\begin{enumerate}\label{conditions_equality}
\item $|\mathcal{A}|$ and $|\mathcal{A}^c|$ are even;
\item $|P| = 2 \; \forall \; P \in \mathcal{A}$ and $|P| = 1 \; \forall \; P \in \mathcal{A}^c $;
\item $|\mathcal{S}|=2 \; \forall \; \mathcal{S} \in \mathfrak{S} $;
\item for any $\mathcal{S} \in \mathfrak{S}$, either $\mathcal{S} \subset \mathcal{A}$ or $\mathcal{S} \subset \mathcal{A}^c$.
\end{enumerate}

Thus by \eqref{defBn}--\eqref{est2}
\begin{equation}\label{summary_estBn}
\frac{B_n(\mathcal{P},\mathcal{A})}{(n \log n)^p} \le \left\{ 
\begin{array}{ll} C(\log n)^{-\frac{|\mathcal{A}^c|}{2}} & \text{ if } t = 0, \\
C n^{-t} (\log n)^{3 p} & \text{ if } t > 0.
\end{array} \right.
\end{equation}

We will consider three cases separately: 
\begin{enumerate}
\item[] Case 1: $t\ge1$;

\item[] Case 2: $t = 0$ and $|\mathcal{A}^c| \ge 4$;

\item[] Case 3: $t=0$ and $|\mathcal{A}^c| = 2$.
\end{enumerate}
For each of these cases we will show that, for every $\epsilon > 0$,
\begin{equation}\label{probas_summable}
\sum_{n=1}^\infty \mathbb{P} \left( |\widehat{m}_{\tau_n}^{(\vec{p})}(\mathcal{P},\mathcal{A})| > \epsilon \sqrt{\tau_n \log \tau_n} \right) < \infty,
\end{equation}
and the result will follow by \eqref{e:ineg_decoup}.
To prove \eqref{probas_summable}, we will use Markov's inequality together with \eqref{estimate_terms_A} and \eqref{summary_estBn}.
In the third case, the variance estimate \eqref{estimate_terms_A} is not good enough, 
but we will get a better bound estimating a higher moment.
\vspace{0.2cm}

\textbf{Case  1:} $(t\ge1)$ For $P \in \mathcal{A}$, we have 
\begin{equation}
\EE\left[ (q^{(n)}_1)^{|P|} \right]^2 \le 2^{2|P|} b_n^{2 (|P| - 2)}
\end{equation}
as in \eqref{eq:estsceneTR} and, for $P \notin \mathcal{A}$, we can estimate in a similar fashion
\begin{align}
\left\| (q^{(n)}_1)^{|P|} - \EE\left[(q^{(n)}_1)^{|P|} \right] \right\|_2^2
& \le 2^{2 |P|} \EE \left[ \left(q_1 \mathbf{1}_{\{|q_1|\le b_n\}}\right)^{2|P|} \right] \nonumber \\
& \le 2^{2 |P|} \mathbb{E} \left[ \left(q_1 \mathbf{1}_{\{|q_1|\le b_n\}}\right)^{2(|P|-1)} q_1^2 \right] \nonumber \\
& \le 2^{2 |P|} b_n^{2 (|P| - 1)}.
\end{align}
Using \eqref{estimate_terms_A}, \eqref{summary_estBn} and 
$|\mathfrak{S}| + |\{\mathcal{S} \colon \mathcal{S} \subset \mathcal{A}\}| \le |\mathcal{A}| + |\mathcal{A}^c|/2$, 
we get
\begin{align}
\frac{\left\|\widehat{m}_n^{(\vec{p})}(\mathcal{P},\mathcal{A})\right\|_2^2}{(n \log n)^p} 
& \le C b_n^{4(p - |\mathcal{A}| -|\mathcal{A}^c|/2)} n^{-t} (\log n)^{3p} \le C \left(\frac{b_n^4}{n}\right)^t (\log n)^{3p} \nonumber\\
& =  C n^{-t\left(1-4\beta \right)} ( \log n)^{3p}
\end{align}
which is summable along $\tau_n$ since $\beta < 1/4$.

\vspace{0.2cm}

\textbf{Case 2:} ($t=0, |\mathcal{A}^c| \ge 4$) 
As mentioned above, in this case $|P|=2$ for $P \in \mathcal{A}$ and $|P| = 1$ for $P \notin \mathcal{A}$.
Using $\mathbb{E}[|q^{(n)}_1|^2] \le \mathbb{E}\left[|q_1|^2\right] = 1$,
we get from \eqref{estimate_terms_A} and \eqref{summary_estBn} that
\begin{equation}\label{eq:estpirestermesTR}
\frac{\left\|\widehat{m}_n^{(\vec{p})}(\mathcal{P}, \mathcal{A})\right\|_2^2}{(n \log n)^p} \le C (\log n)^{-2}
\end{equation}
which is summable along $\tau_n$ since $\alpha > 1/2$. 

\vspace{0.2cm}

\textbf{Case 3:} ($t=0, |\mathcal{A}^c| = 2$)
In this case, \eqref{estimate_terms_A} is not enough to prove \eqref{probas_summable}.
However, since $|\mathcal{A}^c|=2$ and $t=0$, by \eqref{rep_terms} and the discussion after \eqref{deftrou} these terms are of the form
\begin{equation}\label{eq:convpp1}
\widehat{m}_n^{(\vec{p})}(\mathcal{P},\mathcal{A}) 
= \sum_{
a_{P_1} \neq a_{P_2} \in [n t_N]
} q_{a_{P_1}}^{(n,P_1)} q_{a_{P_2}}^{(n,P_2)} \mathcal{W}_n(a_{P_1}, a_{P_2})
\end{equation}
where $P_1, P_2 \in \mathcal{P}$ are such that 
$\mathcal{A}^c = \{P_1, P_2\} \in \mathfrak{S}$ 
and
\begin{equation}\label{def_Wnij}
\mathcal{W}_n(a_{P_1}, a_{P_2}) = \EE\left[ (q^{(n)}_1)^2 \right]^{(|\mathcal{P}|-2)}
\hspace{-10pt} 
\sum_{
(a_P)_{P \in \mathcal{A}} \in \mathcal{D}^{\mathcal{A}}_n
} \PP\left(\bigcap_{\mathcal{S} \in \mathfrak{S}} \bigcap_{P,Q \in \mathcal{S}} \{S_{a_P} = S_{a_Q}\} \right)
\mathbf{1}_{\{a \in \mathcal{D}_n^{\mathcal{P}}\}}.
\end{equation}
Rewrite
\begin{equation}\label{e:Case3rewrite}
\widehat{m}_n^{(\vec{p})}(\mathcal{P},\mathcal{A}) = \sum_{i \neq j \in [n t_N]} q_{i}^{(n)} \widehat{q}_{j}^{(n)} \mathcal{W}_n(i,j)
\end{equation}
where $\widehat{q}$ is an independent copy of $q$.
Since $\mathcal{W}_n(i,j)$ is symmetric and
\begin{equation}\label{convpp_defmart1}
k \mapsto \sum_{i \neq j \in [k]} q_{i}^{(n)} \widehat{q}_{j}^{(n)} \mathcal{W}_n(i,j)
\end{equation}
is a centered martingale, by Burkholder's and Minkowski's inequalities we have
\begin{align}\label{convpp_est1}
\left\|\widehat{m}_n^{(\vec{p})}(\mathcal{P},\mathcal{A})\right\|_{\gamma}^2 
& \le C \sum_{ j = 1 }^{\lfloor n t_N \rfloor} \left\| q_{j}^{(n)} \sum_{i \in [n t_N]\setminus \{ j \}} \widehat{q}_i^{(n)} \mathcal{W}_n(i,j) \right\|_{\gamma}^2 \nonumber\\
& = C \; \EE \left[|q_1^{(n)}|^\gamma \right]^{\frac{2}{\gamma}}\sum_{j=1}^{\lfloor n t_N \rfloor} \left\| \sum_{i \in [n t_N] \setminus \{j\} } q_{i}^{(n)} \mathcal{W}_n(i,j) \right\|_\gamma^2.
\end{align}
By the Marcinkiewicz-Zygmund and Minkowski inequalities, \eqref{convpp_est1} is at most
\begin{align}\label{convpp_est2}
C \sum_{i \neq j \in [nt_N]} \mathcal{W}^2_n(i,j) \le C B_n(\mathcal{P},\mathcal{A}) \le C n^p (\log n)^{p-1},
\end{align}
where we used \eqref{defBn} and \eqref{summary_estBn}.
Combining \eqref{convpp_est1}--\eqref{convpp_est2} we get
\begin{equation}
\frac{\left\|\widehat{m}_n^{(\vec{p})}(\mathcal{P},\mathcal{A})\right\|_{\gamma}^{\gamma}}{(n \log n)^p}
\le C (\log n)^{-\frac{\gamma}{2}}
\end{equation}
which is summable along $\tau_n$ since $\alpha > 2/\gamma$.
\end{proof}

\subsubsection{Conclusion}
\label{subsubsec:concl_proof_propCLTsubseq}

From the results of Section~\ref{subsubsec:analysis_terms} we obtain the following two propositions.
Together with Proposition~\ref{prop:TvsNT}, they will allow us to finish
the proof of Proposition~\ref{prop:CLTsubseq}.

\begin{prop}\label{prop:convannealedmom}\emph{(Convergence of annealed moments)}

For every $\vec{p} \in (\NN^*)^N$,
\begin{equation}\label{eq:convannealedmom}
\lim_{n \to \infty} \EE\left[\prod_{k=1}^N \left( \frac{K_{\lfloor n t_k \rfloor}^{(n)}}{\sqrt{n \log n}} \right)^{p_k} \right]
 = \EE \left[ \prod_{k=1}^N B_{t_k}^{p_k}\right],
\end{equation}
where $B$ is a Brownian motion with variance $\sigma^2$.
\end{prop}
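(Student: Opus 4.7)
The plan is to compute $\mathbb{E}[m_n^{(\vec p)}]$ via the decomposition $m_n^{(\vec p)} = \sum_{\mathcal{P}} m_n^{(\vec p)}(\mathcal{P})$ from \eqref{decompmomquenched2}, identify the partitions $\mathcal{P}$ whose annealed contribution is of order $(n\log n)^{p/2}$, and verify that the resulting combinatorial sum reproduces Wick's formula for the Gaussian moments. Following the proof of Proposition~\ref{prop:caseAc=0}, the estimate
$$|\mathbb{E}[m_n^{(\vec p)}(\mathcal{P})]| \le C\, b_n^{2(p-|\mathcal{P}|)}\, n^{|\mathfrak{S}|}(\log n)^{|\mathcal{P}|-|\mathfrak{S}|}$$
is of order $(n\log n)^{p/2}$ precisely when $|\mathcal{P}|=p$ and $|\mathfrak{S}|=p/2$, and of strictly smaller order otherwise, thanks to $\beta<1/4$. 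For odd $p$ no such partition exists, so the quotient tends to $0$, matching $\mathbb{E}[\prod_k B_{t_k}^{p_k}]=0$. For even $p$, a dominant $\mathcal{P}$ has all blocks of size $2$; the graph structure on $\mathcal{P}$ defined in \eqref{e:defgraph2} is then a $2$-regular multigraph with $p$ vertices and $p$ edges whose connected components are by definition the supercomponents in $\mathfrak{S}$, so $|\mathfrak{S}|=p/2$ forces each component to be a bigon. Hence dominant $\mathcal{P}$ are in bijection with pairs $(\pi,\varepsilon)$, where $\pi$ is a pair-partition of the $p$ edges $\{(k,\ell):k\in[N],\ell\in[p_k]\}$ into $p/2$ \emph{bundles} and $\varepsilon\in\{+,-\}^{p/2}$ specifies, for each bundle, one of the two ways of distributing its four endpoints between the two blocks of the corresponding bigon.

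For such a dominant $\mathcal{P}$, write $\mathcal{S}_s=\{P_1^{(s)},P_2^{(s)}\}$ for the $s$-th supercomponent, $\alpha_s:=a_{P_1^{(s)}}$, $\beta_s:=a_{P_2^{(s)}}$, and $\tau_s:=t_{k_1^{(s)}}\wedge t_{k_2^{(s)}}$, where $(k_1^{(s)},\ell_1^{(s)}),(k_2^{(s)},\ell_2^{(s)})$ are the two edges of bundle $s$. Using $\mathbb{E}[(q^{(n)}_1)^2]\to 1$, identity \eqref{analysis_annealed1} becomes
$$\mathbb{E}[m_n^{(\vec p)}(\mathcal{P})] = (1+o(1))\sum_{a \in \mathcal{D}_n^{\mathcal{P}}} \mathbb{P}\Bigl(\bigcap_{s=1}^{p/2}\{S_{\alpha_s}=S_{\beta_s}\}\Bigr).$$
Dropping the all-distinct constraint contributes only terms with some $a_P=a_Q$; identifying two such indices reduces the effective block count by one and yields a bound of order $n(n\log n)^{p/2-1}=o((n\log n)^{p/2})$ by Lemma~\ref{lem:intersecloctimes}. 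The unconstrained sum equals $\mathbb{E}\bigl[\prod_{s=1}^{p/2} I_{\lfloor n\tau_s\rfloor}\bigr]$, and since in $d=2$ one has $I_n/(n\log n)\to 2\sigma^2$ in probability and the sequence $\{(I_n/(n\log n))^{p/2}\}_n$ is uniformly integrable by Lemma~\ref{lem:intersecloctimes}, this expectation is asymptotic to $(2\sigma^2 n\log n)^{p/2}\prod_s \tau_s$. Summing over the $2^{p/2}$ orientations for each $\pi$ (each producing the same $\prod_s \tau_s$) and then over all pairings $\pi$, and finally dividing by $2^p$ to pass from $m_n^{(\vec p)}$ to $\mathbb{E}\bigl[\prod_k (K^{(n)}_{\lfloor nt_k\rfloor})^{p_k}\bigr]$, the prefactors combine to $\sigma^p\sum_\pi\prod_{\{e,e'\}\in\pi} t_{k(e)}\wedge t_{k(e')}$, which is precisely $\mathbb{E}\bigl[\prod_k B_{t_k}^{p_k}\bigr]$ by Wick's formula for the centered Gaussian vector $(B_{t_k})_{k=1}^N$ with $\cov(B_s,B_t)=\sigma^2(s\wedge t)$.

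The main obstacle is the joint asymptotic $\mathbb{E}[\prod_s I_{\lfloor n\tau_s\rfloor}]\sim(2\sigma^2)^{p/2}\prod_s\tau_s\,(n\log n)^{p/2}$, which relies on the $L^{p/2}$-convergence of the renormalized two-dimensional self-intersection local time together with the careful removal of the all-distinct constraint on the indices. Once this asymptotic is in place, the remainder is combinatorial bookkeeping, the most delicate point being the verification that the numerical prefactor $(2\sigma^2)^{p/2}\cdot 2^{p/2}/2^p=\sigma^p$ recombines exactly into the covariance structure of a Brownian motion with variance $\sigma^2$.
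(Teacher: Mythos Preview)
Your argument is correct, but it takes a genuinely different route from the paper. The paper's proof is a two-line soft argument: the annealed functional CLT for $K^{(n)}$ (obtained from Theorem~\ref{thm:FCLTgivenS} together with Proposition~\ref{prop:TvsNT}) gives weak convergence of $\prod_k (K^{(n)}_{\lfloor nt_k\rfloor}/\sqrt{n\log n})^{p_k}$ to $\prod_k B_{t_k}^{p_k}$, and Proposition~\ref{prop:caseAc=0} gives a uniform bound on all joint moments, hence uniform integrability; moment convergence then follows immediately. No combinatorial identification of the limit is ever needed.

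Your approach, by contrast, computes the limiting moments \emph{ab initio}: you isolate the dominant partitions ($|\mathcal{P}|=p$, $|\mathfrak{S}|=p/2$), classify them as bigon collections indexed by $(\pi,\varepsilon)$, invoke the $L^{p/2}$-law of large numbers for $I_n/(n\log n)$ to obtain $\EE[\prod_s I_{\lfloor n\tau_s\rfloor}]\sim (2\sigma^2)^{p/2}\prod_s\tau_s\,(n\log n)^{p/2}$, and then match the resulting sum against Wick's formula. This is longer and requires more care (the removal of the all-distinct constraint, the joint uniform integrability of the $I_{\lfloor n\tau_s\rfloor}$, and the factor-of-$2$ bookkeeping), but it has the virtue of being self-contained: it does not presuppose the annealed CLT and in fact yields an independent method-of-moments proof of it. The paper's route is shorter precisely because it recycles Theorem~\ref{thm:FCLTgivenS}, which is proved separately in the appendix for other purposes.
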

\begin{proof}
First we note that, because of the annealed functional CLT for $K$  (see Appendix \ref{sec:appendixFCLT}) and
Proposition~\ref{prop:TvsNT}, $K^{(n)}$ satisfies a functional CLT with variance $\sigma^2$ under $\PP$. 
Integrating \eqref{decompmomquenched2} and applying Proposition~\ref{prop:caseAc=0},
we see that, for all $\vec{p} \in (\NN^*)^N$,
\begin{equation}\label{proof_convannmom1}
\sup_{n \ge 2} \left| \EE\left[\prod_{k=1}^N \left( \frac{K_{\lfloor n t_k \rfloor}^{(n)}}{\sqrt{n \log n}} \right)^{p_k} \right] \right| 
= 2^{-p} \sup_{n \ge 2} \frac{\left| \EE\left[m_n^{(\vec{p})} \right] \right|}{ (n \log n)^{p/2}} < \infty,
\end{equation}
and hence $\prod_{k=1}^N \left( K_{\lfloor n t_k \rfloor}^{(n)}/\sqrt{n \log n} \right)^{p_k}$ is uniformly integrable for all $\vec{p} \in (\NN^*)^N$.
\end{proof}

\begin{prop}\label{prop:convquenchedmom}\emph{(Convergence of quenched moments)}
\text{}

For every $\vec{p} \in (\NN^*)^N$,
\begin{equation}\label{eq:convquenchedmom}
\lim_{n \to \infty} \EE\left[\prod_{k=1}^N \left( \frac{K_{\lfloor \tau_n t_k \rfloor}^{(\tau_{n})}}{\sqrt{\tau_n \log \tau_n}} \right)^{p_k} \;\middle|\; q \right] 
- \EE\left[\prod_{k=1}^N \left( \frac{K_{\lfloor \tau_n t_k \rfloor}^{(\tau_{n})}}{\sqrt{\tau_n \log \tau_n}} \right)^{p_k}  \right] = 0 \;\; \mathbb{P} \text{-a.s.}
\end{equation}
\end{prop}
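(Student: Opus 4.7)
The plan is to recognize that Proposition~\ref{prop:convquenchedmom} is essentially a direct corollary of the preceding Proposition~\ref{prop:caseAc_notempty}, combined with the decomposition \eqref{decompmomquenched2}--\eqref{decompmomquenched4}. The key identity \eqref{m_arequenchedmoments} rewrites the quenched moment as $2^{-p} m_n^{(\vec{p})}$, and linearity of expectation gives the annealed moment as $2^{-p}\,\EE[m_n^{(\vec{p})}]$. So what I need to show is that
\begin{equation*}
\frac{m_{\tau_n}^{(\vec{p})} - \EE\bigl[m_{\tau_n}^{(\vec{p})}\bigr]}{(\tau_n \log \tau_n)^{p/2}} \longrightarrow 0 \;\; \PP\text{-a.s.}
\end{equation*}

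First, I would plug in \eqref{decompmomquenched2} to get
\begin{equation*}
m_n^{(\vec{p})} - \EE\bigl[m_n^{(\vec{p})}\bigr]
= \sum_{\mathcal{P} \in \mathscr{P}_{\vec{p}}} \Bigl(m_n^{(\vec{p})}(\mathcal{P}) - \EE\bigl[m_n^{(\vec{p})}(\mathcal{P})\bigr]\Bigr),
\end{equation*}
and then apply \eqref{decompmomquenched4} to each $\mathcal{P}$-summand, obtaining
\begin{equation*}
m_n^{(\vec{p})} - \EE\bigl[m_n^{(\vec{p})}\bigr]
= \sum_{\mathcal{P} \in \mathscr{P}_{\vec{p}}} \;\sum_{\substack{\mathcal{A} \subset \mathcal{N} \\ \mathcal{A}^c \neq \emptyset}} m_n^{(\vec{p})}(\mathcal{P},\mathcal{A}).
\end{equation*}
Since the index set $\mathcal{I}_{\vec{p}}$ has cardinality $2p$, the number of partitions $\mathcal{P}$ and of subsets $\mathcal{A}$ depends only on $\vec{p}$ and not on $n$; hence the outer double sum is a \emph{finite} sum whose cardinality is bounded uniformly in $n$.

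Now I would invoke Proposition~\ref{prop:caseAc_notempty} to conclude that, for each fixed $\mathcal{P}$ and each $\mathcal{A}$ with $\mathcal{A}^c\neq \emptyset$,
\begin{equation*}
\lim_{n \to \infty} \frac{m_{\tau_n}^{(\vec{p})}(\mathcal{P},\mathcal{A})}{(\tau_n \log \tau_n)^{p/2}} = 0 \;\; \PP\text{-a.s.}
\end{equation*}
Taking a countable intersection of the corresponding full-measure events over the finitely many pairs $(\mathcal{P},\mathcal{A})$ in play, the normalized difference $(\tau_n \log \tau_n)^{-p/2}\bigl(m_{\tau_n}^{(\vec{p})} - \EE[m_{\tau_n}^{(\vec{p})}]\bigr)$ vanishes $\PP$-a.s., and multiplying by $2^{-p}$ yields \eqref{eq:convquenchedmom}. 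There is no real obstacle here beyond bookkeeping; the substantive work has already been done in the variance/higher-moment estimates of Proposition~\ref{prop:caseAc_notempty}, whose Borel--Cantelli step crucially uses the exponential sparsity of the subsequence $\tau_n = \lceil \exp n^\alpha \rceil$ to turn summable second (resp.\ $\gamma$th) moments into almost-sure convergence.
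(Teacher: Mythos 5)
Your proof is correct and follows exactly the paper's own route: the paper likewise combines \eqref{decompmomquenched2} and \eqref{decompmomquenched4} to write $m_n^{(\vec{p})}-\EE[m_n^{(\vec{p})}]$ as a finite sum of terms $m_n^{(\vec{p})}(\mathcal{P},\mathcal{A})$ with $\mathcal{A}^c\neq\emptyset$ and then invokes Proposition~\ref{prop:caseAc_notempty}. Your additional remarks on the finiteness of the index set and the intersection of full-measure events are just the bookkeeping the paper leaves implicit.
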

\begin{proof}
Combining \eqref{decompmomquenched2} and \eqref{decompmomquenched4},
we see that 
$$ 2^p \left\{ \EE\left[\prod_{k=1}^N \left(K_{\lfloor n t_k \rfloor}^{(n)} \right)^{p_k} \;\middle|\; q \right]
-\EE\left[\left(K_{\lfloor n t_k \rfloor}^{(n)} \right)^{p_k} \right] \right\}= m_{n}^{(\vec{p})}-\EE\left[m_{n}^{(\vec{p})} \right]$$ 
is a sum of terms $m_{n}^{(\vec{p})}(\mathcal{P},\mathcal{A})$ with $\mathcal{A} \subset \mathcal{N}$, $\mathcal{A}^c \neq \emptyset$, so
the result follows from Proposition~\ref{prop:caseAc_notempty}.
\end{proof}

\begin{proof}[Proof of Proposition~\ref{prop:CLTsubseq}]
The conclusion is now straightforward:
Propositions \ref{prop:convannealedmom}--\ref{prop:convquenchedmom} give us
\eqref{eq:CLTsubseq} with $K^{(n)}$ in place of $K$ by the Cram\'er-Wold device and the method of moments, 
and this is passed to $K$ by Proposition~\ref{prop:TvsNT}.
\end{proof}

\subsection{Proof of Proposition \ref{prop:convas}} 
\label{sec:proofconvas}

Before we start, we note some properties of the subsequence $\tau_n $ that will be used in the sequel:
there exist positive constants $K_1$, $K_2$ such that
\begin{equation}\label{propsoussuite}
\begin{array}{ll}
\text{(p1)} & \lim_{n \to \infty}\tau_{n+1}/\tau_n = 1; \\
\text{(p2)} & K_1 \exp\left(n^\alpha /2\right) \le \tau_{n+1} - \tau_n \le K_2 \tau_n/n^{1-\alpha} \;\; \forall \; n \in \NN^*; \\
\text{(p3)} & \tau_n \le K_2 \exp \left(n^\alpha\right) \;\; \forall \; n \in \NN^*.
\end{array}
\end{equation}

\begin{proof}
For integers $b \ge a \ge 2$, let 
\begin{equation}\label{defincrZ}
K_{a,b} := K_b -K_a.
\end{equation}
Once we show that
\begin{equation}\label{eq:aproxsoussuite1}
\lim_{n\to\infty}\sup_{\lfloor \tau_n t \rfloor \le k \le \lfloor \tau_{n+1} t \rfloor}\frac{|K_{\lfloor \tau_n t \rfloor, k }|}{\sqrt{\tau_n \log \tau_n}} = 0 \;\;\; \mathbb{P}\text{-a.s.},
\end{equation}
Proposition~\ref{prop:convas} will follow by noting that
\begin{align}\label{difZseqandsubsubseq}
\left| \frac{K_{\lfloor n t \rfloor}}{\sqrt{n \log n}} - \frac{K_{\lfloor \tau_{i(n)} t \rfloor}}{\sqrt{\tau_{i(n)} \log \tau_{i(n)}}} \right|
& \le \frac{|K_{\lfloor n t \rfloor} - K_{\lfloor \tau_{i(n)} t \rfloor }|}{\sqrt{n \log n}}  \nonumber \\
& \; + \frac{|K_{\lfloor \tau_{i(n)} t \rfloor}|}{\sqrt{\tau_{i(n)} \log \tau_{i(n)}}}\left(1 - \sqrt{\frac{\tau_{i(n)} \log \tau_{i(n)}}{ n \log n}} \right).
\end{align}
Then, by \eqref{eq:aproxsoussuite1} and since $\lim_{n \to \infty}n^{-1}\tau_{i(n)} = 1$, 
the first term in the r.h.s.\ of \eqref{difZseqandsubsubseq} converges a.s.\ to $0$. 
Moreover, the second term converges for $\mathbb{P}$-a.e.\ $q$ in $\mathbb{P}(\cdot | q)$-probability
to $0$ since, by Proposition~\ref{prop:CLTsubseq}, $K_{\lfloor \tau_n t \rfloor}/\sqrt{\tau_n \log \tau_n}$ is a.s.\ tight under $\mathbb{P}(\cdot | q)$.
Therefore, we only need to show \eqref{eq:aproxsoussuite1}. 
By Proposition~\ref{prop:TvsNT}, it is enough to prove \eqref{eq:aproxsoussuite1}
for the sequence $q_{i}^{(\tau_{n+1})}, i\geq 1$, i.e.
\begin{equation}\label{eq:aproxsoussuite1-1}
\lim_{n\to\infty}\sup_{\lfloor \tau_n t \rfloor \le k \le \lfloor \tau_{n+1} t \rfloor }\frac{|K_{\lfloor \tau_n t \rfloor,k}^{(\tau_{n+1})}|}{\sqrt{\tau_n \log \tau_n}} = 0 \;\;\; \mathbb{P}\text{-a.s.}
\end{equation}
To this end, we will make use of a maximal inequality for demimartingales due to Newman and Wright \cite{NeWr82}, as well as Bernstein's inequality.

The sequence $(K_{a,k}^{(n)})_{k \ge a}$ is a zero-mean martingale under both $\PP$ and $\PP(\cdot | S)$ with respect to the filtration
$\sigma ((q_i)_{i\le k}, S )$. 
Indeed, 
$$K_{a, k+1}^{(n)} -K_{a ,k}^{(n)}= K_{k+1}^{(n)}- K_{k}^{(n)} = 
q_{k+1}^{(n)} \sum_{i=1}^k q_i^{(n)} 1_{\{S_i=S_{k+1}\}}$$
and the r.v.'s $q_i^{(n)}$, $i\geq 1$ are independent and centered.
Therefore, 
\begin{align}\label{est_cond_variance}
\EE \left[ \left(K^{(n)}_{a,b}\right)^2 \;\middle|\; S\right] 
& = \sum_{k=a+1}^b \EE \left[ \left(q_{k}^{(n)} \sum_{i=1}^{k-1} q_i^{(n)} 1_{\{S_i=S_{k}\}} \right)^2 \;\middle|\; S \right] \nonumber\\
& \le C \sum_{k=a+1}^b \sum_{i=1}^{k-1} \mathbf{1}_{\{S_i = S_k\}} = C \sum_{k=a+1}^b N_{k-1}(S_k) \nonumber\\
& \le C \sum_{x \in \ZZ^2} N_b(x) N_{a,b}(x) \le C \sqrt{I_{b}} \sqrt{I_{a,b}}
\end{align}
where $N_{a,b}(x) := \sum_{k=a+1}^b \mathbf{1}_{\{S_k = x\}}$, $I_{a,b} := \sum_{x \in \ZZ^2} N^2_{a,b}(x)$
and for the last step we used the Cauchy-Schwarz inequality.
Integrating \eqref{est_cond_variance} and using H\"older's inequality we get
\begin{align}\label{est_variance}
\EE \left[ \left(K^{(n)}_{a,b}\right)^2\right] 
& \le C \EE \left[\sqrt{I_{b}} \sqrt{I_{a,b}} \right] \le C \sqrt{\EE \left[ I_{b} \right] \EE \left[ I_{a,b} \right]} \nonumber\\
& \le C \sqrt{b \log b} \sqrt{(b-a) \log (b-a)} \le C b \log b,
\end{align}
where for the third inequality we used Lemma~\ref{lem:intersecloctimes}(i) and that $I_{a,b}$ has the same law as $I_{b-a}$.

Since $K^{(n)}_{a,k}$ is in particular a demimartingale under $\PP$, by Corollary 6 in \cite{NeWr82} we get
\begin{align}\label{max_ineq}
& \PP\left( \sup_{\lfloor \tau_n t \rfloor \leq k \leq \lfloor  \tau_{n+1} t \rfloor} \vert K^{(\tau_{n+1})}_{\lfloor \tau_n t \rfloor ,k} \vert \geq  2 \varepsilon \sqrt{\tau_n \log \tau_n}\right) \nonumber \\
& \qquad \qquad \qquad \leq \sqrt{\frac{2 \EE\left[\left(K^{(\tau_{n+1})}_{\lfloor \tau_n t \rfloor, \lfloor \tau_{n+1} t \rfloor}\right)^2 \right]}{ \varepsilon^2 \tau_n \log \tau_n}}
\sqrt{\PP \left( |K^{(\tau_{n+1})}_{\lfloor \tau_n t \rfloor, \lfloor \tau_{n+1} t \rfloor}| \geq \varepsilon \sqrt{\tau_n \log \tau_n} \right)} \nonumber\\
& \qquad \qquad \qquad \le C \sqrt{\PP \left( |K^{(\tau_{n+1})}_{{\lfloor \tau_n t \rfloor, \lfloor \tau_{n+1} t \rfloor}}| \geq \varepsilon \sqrt{\tau_n \log \tau_n} \right)}
\end{align}
by \eqref{est_variance} and the properties of $\tau_n$.

Now note that $K_{a,b}^{(n)}$ can be rewritten as 
$$ \sum_{x\in \ZZ^2} \Big(\Lambda_{b} (x) -\Lambda_{a}(x)\Big) $$
where $$\Lambda_k(x) = \sum_{i < j \in \mathcal{L}_k(x) } q_i^{(n)} q_j^{(n)}, \quad \mathcal{L}_k(x) := \{1 \le i \le k \colon S_i = x\}. $$
Given the random walk $S$, the random variables $\Lambda_{b}(x) -\Lambda_{a}(x), x\in \ZZ^2$ are independent, centered and 
uniformly bounded by $(b_{n} N_b^*)^2.$ 
Furthermore, by \eqref{est_cond_variance},
\begin{equation}\label{varcond}
\sum_{x\in \ZZ^2} \EE[ (\Lambda_{b} (x) -\Lambda_{a} (x))^2|S] \leq   C(\sqrt{I_{a,b}} \sqrt{I_{b}} ) .
\end{equation}
Thus we may use Bernstein's inequality under $\PP(\cdot |S)$ to estimate the probability in the right hand side of \eqref{max_ineq}, obtaining that, for all $u>0$,
$$\PP \left( | K_{a,b}^{(n)} |  \geq u \;\middle|\; S \right) 
\leq \exp \left(  -\frac 1 2 \frac{ u^2}{\sqrt{I_{a,b} I_{b}}+ u (b_{n} N_b^*)^2} \right).$$
Integrating with respect to the random walk, we get, for every $\varepsilon > 0$,
\begin{align}\label{exp}
& \PP \left( | K_{{\lfloor \tau_n t \rfloor, \lfloor \tau_{n+1} t \rfloor}}^{(\tau_{n+1})} |  \geq  \varepsilon \sqrt{\tau_n \log \tau_n} \right) \nonumber\\
& \qquad \leq  \EE \left[\exp\left(  -C \frac{  \tau_n \log \tau_n }{ \sqrt{I_{{\lfloor \tau_n t \rfloor, \lfloor \tau_{n+1} t \rfloor}} I_{\lfloor \tau_n t \rfloor}} + \sqrt{\tau_n \log \tau_n} (b_{\tau_{n+1}} N^*_{\lfloor \tau_{n+1} t \rfloor})^2 }\right)\right].
\end{align}

Recall that, by Lemma~\ref{lemma:avmaxloctimes}(ii), there exists $C>0$ such that
\begin{equation}\label{eq:estim_prob_N*}
P \left(N^*_k > C (\log k)^{2} \right)
\le k^{-2} \;\; \forall \; k \ge 1.
\end{equation}

Now fix $0 < \delta < \frac{1}{2}(\alpha^{-1} - 1)$ and an integer $\theta > 2/(\alpha \delta)$.
By Markov's inequality and Lemma~\ref{lem:intersecloctimes}(i), we have
\begin{equation}\label{eq:estim_prob_In}
P \left( I_k > k (\log k)^{1+\delta} \right) 
\le \frac{E \left[ I_k^{\theta}\right]}{ k^{\theta} (\log k)^{(1+\delta)\theta}}
\le \frac{C}{(\log k)^{\theta \delta}} \;\; \forall \; k \ge 2.
\end{equation}

By \eqref{exp}--\eqref{eq:estim_prob_In}, the subadditivity of $\sqrt{\cdot}$ and the fact that $e^{-2x/(y+z)} \le e^{-x/y}+ e^{-x/z}$ for any $x,y,z>0$, we see that \eqref{max_ineq} is at most
\begin{equation}\label{eq:aproxsoussuite_ineg2}
C_1(\tau_{n+1})^{-1}+C_2(\log (\tau_{n+1} - \tau_{n}))^{-\frac{\theta \delta}{2}} +C_3 (\log \tau_n)^{-\frac{\theta \delta}{2}} 
+ e^{ -C_4 d_n/e_n}+ e^{-C_4 d_n/f_n},
\end{equation}
where $C_1$--$C_4$ are positive constants and
\begin{align}
d_n & := \tau_n \log \tau_n, \nonumber\\
e_n & := \sqrt{\tau_n(\tau_{n+1}-\tau_n)} [\log(\tau_{n+1}-\tau_{n}) \log(\tau_n)]^{(1+\delta)/2}, \nonumber\\ 
f_n & := \sqrt{\tau_n \log \tau_n} (\tau_{n+1})^{2\beta} (\log \tau_{n+1})^4.
\end{align}
Using the properties of $\tau_n$, we see that the first term of \eqref{eq:aproxsoussuite_ineg2} is summable;
by our choice of $\theta$, so are the second and the third. Furthermore,
\begin{equation}
d_n/e_n \ge C n^{\frac{1-\alpha}{2}}/(\log\tau_n)^{\delta} \ge C n^{\frac{1-\alpha(1+2\delta)}{2}},
\end{equation}
and so the fourth term is summable by our choice of $\delta$. As for the last term,
note that
\begin{equation}
d_n/f_n \ge C (\tau_n)^{\frac12(1 - 4\beta)} (\log {\tau_n})^{-7/2}
\end{equation}
so the fifth term is summable since $\beta < 1/4$.
Thus, by the Borel-Cantelli lemma, \eqref{eq:aproxsoussuite1} holds.
\end{proof}

\section{Case of dimensions three and higher}
\label{sec:dim>2}

\subsection{Assumptions and results}
\label{subsec:assumpresultsd>2}

In $d\ge3$ we can relax the condition $\gamma > 2$ used in Section~\ref{sec:dim2}.
Here we will only assume (A1) and
\begin{equation}\label{assumptionsd>2}
\EE \left[ q_1\right] =0, \quad \EE \left[ q_1^2\right] =1.
\end{equation}
However, we will need to recenter $K_n$, since its quenched expectation
is not subdiffusive as can be checked with a simple computation.

\begin{theorem}\label{theoCLT3d}
In $d\ge3$, under assumptions (A1) and \eqref{assumptionsd>2}, for $\mathbb{P}$-a.e.\ $q$,
the process
\begin{equation}\label{eq:theoCLT3d}
\frac{K_{\lfloor n t \rfloor} - \EE\left[ K_{\lfloor n t \rfloor} | q \right]}{\sqrt{n}}, \;\;\; t \ge 0, 
\end{equation}
converges under $\PP(\cdot | q)$ in the Skorohod topology 
to a Brownian motion with variance
\begin{equation}\label{var_3d}
\sigma^2 = \sum_{i=1}^{\infty} \PP(S_i=0)\PP(S_i \neq 0).
\end{equation}
\end{theorem}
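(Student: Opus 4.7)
The plan is to apply the martingale central limit theorem under $\PP(\cdot \mid q)$ to a Doob martingale of $K_n - R_n$, where $R_n := \EE[K_n \mid q] = \sum_{1 \le i < j \le n} q_i q_j p_{j-i}(0)$ is the quenched mean and $p_m(x) := \PP(S_m = x)$. For fixed $n$, I would set
\[
M_k := \EE\bigl[K_n - R_n \bigm| q,\, \F_k^S\bigr], \qquad k = 0, 1, \ldots, n,
\]
with $\F_k^S := \sigma(S_1, \ldots, S_k)$, so that $M_0 = M_1 = 0$ and $M_n = K_n - R_n$. Splitting the sum defining $K_n$ according to whether $j$ lies before or after $k$ and using the Chapman--Kolmogorov identity, the martingale increment becomes
\[
M_k - M_{k-1} = q_k \sum_{i<k} q_i \bigl(\mathbf{1}_{\{S_i = S_k\}} - p_1(S_i - S_{k-1})\bigr) + \sum_{\substack{i<k \\ j>k}} q_i q_j \bigl[p_{j-k}(S_i - S_k) - p_{j-k+1}(S_i - S_{k-1})\bigr],
\]
each piece being centered under $\PP(\cdot \mid q, \F_{k-1}^S)$.

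The core of the argument is verifying, for $\PP$-a.e.\ $q$, the conditional variance convergence
\[
\frac{1}{n} \sum_{k=2}^n \EE\bigl[(M_k - M_{k-1})^2 \bigm| q, \F_{k-1}^S\bigr] \longrightarrow \sigma^2 \quad \text{in } \PP(\cdot \mid q)\text{-probability.}
\]
The annealed expectation of the l.h.s.\ equals $n^{-1}\EE[(K_n - R_n)^2]$, and a direct four-point calculation using the decay $p_k(0) = O(k^{-d/2})$ shows this converges to $\sigma^2 = \sum_k p_k(0)(1 - p_k(0))$. Concentration around this annealed value reduces to showing $\PP$-a.s.\ vanishing of off-diagonal quadratic forms $\sum_{i \neq i'} q_i q_{i'} c_{i,i'}^{(n)}$ in the charges, whose coefficients are built from products of transition kernels. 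Because the Green's function $G = \sum_k p_k(0)$ is finite in $d \ge 3$, the $\PP$-variance of these forms admits a summable bound; a Borel--Cantelli argument along a sub-sequence $\tau_n := \lceil \exp n^\alpha \rceil$ ($\tfrac12 < \alpha < 1$), followed by a filling-in step in the spirit of Proposition~\ref{prop:convas}, upgrades the resulting in-probability convergence to $\PP$-a.s.\ convergence. The Lindeberg condition is handled through a charge-truncation at level $b_n = n^\beta$ analogous to Proposition~\ref{prop:TvsNT}: on the truncated charges the increments are deterministically $O(n^{2\beta} N_n^*)$, and the $L^2$-error of truncation vanishes by Lemmas~\ref{lemma:avmaxloctimes}(ii) and \ref{lem:intersecloctimes_d>2}.

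Extension from single-time convergence to finite-dimensional distributions follows by applying the same martingale argument to linear combinations $\sum_k c_k (K_{\lfloor n t_k \rfloor} - R_{\lfloor n t_k \rfloor})$ via the Cram\'er--Wold device; Skorohod tightness is then obtained from a fourth-moment estimate on the increments $(K_t - R_t) - (K_s - R_s)$, derived by the same martingale/truncation machinery. The main obstacle is the almost-sure concentration of the conditional variance: with only a second moment on $q$ available, delivering $\PP$-a.s.\ convergence of the off-diagonal quadratic forms requires fully exploiting the summability of their coefficients, which is precisely what transience of the walk in $d \ge 3$ (via finiteness of $G$) provides --- a feature absent in $d = 2$ and responsible for the logarithmic correction and stronger moment assumption needed there.
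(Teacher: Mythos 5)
Your route --- a Doob martingale $M_k=\EE[K_n-\EE[K_n|q]\,|\,q,\F_k^S]$ in the walk's filtration under $\PP(\cdot|q)$, followed by a direct verification of the Lindeberg and conditional-variance conditions --- is genuinely different from the paper's. The paper instead rewrites $K_n-\EE[K_n|q]$ as an additive functional $\sum_{l\le n}H(\xi_l)$ of the stationary ergodic Markov chain $\xi_l=(q_l,\mathcal{X}_l)$ whose state contains the \emph{entire} charge sequence, so that conditioning on $q$ becomes starting the chain at the point $\xi_0$; after the approximation Lemma~\ref{lem:approx} it verifies $\sup_n\EE\big[\EE[\mathbb{H}_n|\xi_0]^2\big]<\infty$ (Lemma~\ref{lem:subdiff}, which is where transience enters) and invokes the Derriennic--Lin CLT for Markov chains started at a point. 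That theorem delivers the quenched functional CLT with no truncation, no Lindeberg check, and no moment assumption on $q$ beyond $\EE[q_1^2]=1$.

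This last point is where your proposal has a genuine gap: Theorem~\ref{theoCLT3d} assumes only \eqref{assumptionsd>2}, and Section~\ref{subsec:assumpresultsd>2} explicitly drops the condition $\gamma>2$ used in $d=2$, yet both of your key steps import machinery that needs $\gamma>2$. First, the truncation at $b_n=n^\beta$ with Borel--Cantelli: the error estimate behind Proposition~\ref{prop:TvsNT} is $\EE[q_1^2\mathbf{1}_{\{|q_1|>b_n\}}]\le C b_n^{-(\gamma-2)}$, and summability along the subsequence is exactly what $\gamma>2$ buys; with a bare second moment this quantity tends to $0$ with no rate, so the a.s.\ control of $K-K^{(n)}$ and hence your Lindeberg verification break down. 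Second, the a.s.\ concentration of the conditional variance is not only a matter of the off-diagonal quadratic forms $\sum_{i\neq i'}q_iq_{i'}c^{(n)}_{i,i'}$ (whose $\PP$-variance indeed requires only second moments): the leading diagonal contribution is $n^{-1}\sum_{i<j}q_i^2q_j^2\,p_{j-i}(0)(1-p_{j-i}(0))$, and its a.s.\ convergence to $\sigma^2$ cannot be obtained by Chebyshev since $\var(q_1^2)$ may be infinite; interchanging the sum over the lag $j-i$ with the ergodic averages in $i$ requires a domination that second moments alone do not provide. In addition, your deterministic bound $O(n^{2\beta}N_n^*)$ on the increments covers only the local part $q_k\sum_{i<k}q_i\mathbf{1}_{\{S_i=S_k\}}$ and not the compensator part $\sum_{i<k<j}q_iq_j[p_{j-k}(S_i-S_k)-p_{j-k+1}(S_i-S_{k-1})]$. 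Under the extra hypothesis $\EE[|q_1|^\gamma]<\infty$ for some $\gamma>2$ your plan is plausibly completable, but it does not prove the theorem under the stated hypotheses.
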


\noindent\textbf{Remark:}
The result above remains true for any transient random walk on $\ZZ^d$, $d\ge1$, as long as
$$\sum_{n=1}^\infty \rho^{-n} \sum_{k=1}^{\rho^n} \sum_{i=k}^\infty \PP(S_i=0) < \infty $$
for some $\rho > 1$.

\begin{proof}
The idea is to approximate $K_n-\EE[K_n|q]$ by an additive functional of a Markov chain
and then apply known results in this setting.
Let
\begin{equation}\label{def:sequence_pairs}
(X(k), q(k)), \;\; k \in \ZZ,
\end{equation}
be an i.i.d.\ sequence with each term distributed as $(S_1,q_1)$, and denote its law by $\PP$ and its expectation by $\EE$.
For a time $l \in \NN$, define the sequences $\mathcal{X}_l$ and $q_l$ by
\begin{equation}\label{defseq_time}
\begin{array}{lcll}
q_l(k) & := & q(l+k), & k \in \ZZ,\\
\mathcal{X}_l(k) & := & X(k+l), & k \le 0,
\end{array}
\end{equation}
and put
\begin{equation}\label{defMkvchain}
\xi_l := (q_l, \mathcal{X}_l).
\end{equation}
Then $\xi_l$ is a Markov chain on the state space $\RR^{\ZZ} \times (\RR^d)^{\ZZ_-}$.
Moreover, the process $\xi_l$ is stationary and ergodic under $\PP$.

For $\mathcal{X} \in (\RR^d)^{\ZZ_-}$ and $i \le k \le 0$, define
\begin{equation}\label{defsigmaop}
\Sigma_i^k(\mathcal{X}) := \mathcal{X}(i+1) + \cdots + \mathcal{X}(k).
\end{equation}
Then, writing
\begin{equation}
K_n = \sum_{k=2}^n q(k) \sum_{i=1}^{k-1} q(k-i) \mathbf{1}_{\{\Sigma_{-i}^0(\mathcal{X}_k)=0\}},
\end{equation}
we see that
\begin{equation}
K_n - \EE [K_n | q]= \sum_{l=2}^n H_l(\xi_l)
\end{equation}
where
\begin{equation}\label{defHt}
H_l(q,\mathcal{X}) := q(0) \sum_{i=1}^{l -1} q(-i) \left\{ \mathbf{1}_{\{\Sigma_{-i}^0(\mathcal{X})=0\}} - \PP (S_i = 0) \right\}.
\end{equation}
Since $d\ge3$, $\sum_{i=1}^{\infty}\PP(S_i=0)<\infty$ and we may define
\begin{equation}\label{defH}
H(q,\mathcal{X}) := q(0) \sum_{i=1}^{\infty} q(-i) \left\{ \mathbf{1}_{\{\Sigma_{-i}^0(\mathcal{X})=0\}} - \PP (S_i = 0) \right\}.
\end{equation}
\begin{lem}\label{lem:approx}
For each $T>0$,
\begin{equation}\label{eq:approx}
\lim_{n \to \infty }\frac{1}{\sqrt{n}} \sup_{1 \le k \le nT} \left|\sum_{l=2}^k(H_l(\xi_l) - H(\xi_l))\right| = 0 \;\;\; \PP\text{-a.s.}
\end{equation}
\end{lem}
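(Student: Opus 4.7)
\textbf{Proof plan for Lemma~\ref{lem:approx}.} The strategy is to exhibit a martingale structure for the partial sums of $D_l := H(\xi_l) - H_l(\xi_l)$, establish an $L^2$ bound on the variance, and then conclude by a subsequence Borel-Cantelli argument together with a maximal inequality. Substituting the definitions of $H_l$ and $H$ at $\xi_l = (q_l, \mathcal{X}_l)$ and using $S_l - S_j$ in place of $\Sigma_{-(l-j)}^0(\mathcal{X}_l)$, one finds
$$D_l \;=\; q(l) \sum_{j \le 0} q(j) \bigl\{ \mathbf{1}_{\{S_l = S_j\}} - \PP(S_{l-j}=0) \bigr\} \;=:\; q(l)\, R_l,$$
where the doubly-infinite walk $S$ is defined via the i.i.d.\ increments $X(k)$. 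Introduce the filtration $\mathcal{F}_l := \sigma(q(j),\, j \le l;\; X(j),\, j \in \ZZ)$. The remainder $R_l$ depends only on the charges at non-positive indices and on $S$, so $R_l \in \mathcal{F}_{l-1}$; moreover $q(l)$ is centered with unit variance and independent of $\mathcal{F}_{l-1}$. Hence $M_k := \sum_{l=2}^k D_l$ is an $(\mathcal{F}_k)$-martingale with orthogonal increments.

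To bound $\EE[D_l^2] = \EE[R_l^2]$, independence and centering of the distinct charges kill all off-diagonal terms in the double sum defining $R_l^2$, leaving
$$\EE[R_l^2] \;=\; \sum_{j \le 0} \var\bigl( \mathbf{1}_{\{S_l = S_j\}}\bigr) \;\le\; \sum_{j \le 0} \PP(S_l = S_j) \;=\; \sum_{i \ge l} \PP(S_i = 0) \;=:\; a_l.$$
Setting $b_k := \sum_{l=2}^k a_l$, the orthogonality gives $\EE[M_k^2] \le b_k$. For $d \ge 3$, the local central limit theorem gives $\PP(S_i=0) = O(i^{-d/2})$, whence $a_l = O(l^{1-d/2})$, so that $b_k = O(\sqrt{k})$ for $d=3$, $O(\log k)$ for $d=4$, and $O(1)$ for $d \ge 5$; in all cases $b_k = o(k)$.

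For the conclusion, fix $\rho > 1$. By Doob's $L^2$ maximal inequality and Markov's inequality, for every $\varepsilon > 0$,
$$\PP\Bigl( \sup_{k \le \rho^n T} |M_k| > \varepsilon \sqrt{\rho^n}\Bigr) \;\le\; \frac{4\, b_{\lfloor \rho^n T \rfloor}}{\varepsilon^2\, \rho^n},$$
whose right-hand side is summable in $n$ by the estimates on $b_k$ above; note that this summability is exactly the general condition displayed in the remark following Theorem~\ref{theoCLT3d}. Borel-Cantelli then yields $\sup_{k \le \rho^n T} |M_k|/\sqrt{\rho^n} \to 0$ almost surely along the subsequence $\rho^n$, and monotonicity of the supremum (sandwiching $n$ between consecutive powers $\rho^m \le n < \rho^{m+1}$) transfers the conclusion to the full sequence, giving \eqref{eq:approx}. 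The main subtlety I anticipate is a careful setup of the doubly-infinite filtration so that $q(l)$ is genuinely independent of both $R_l$ and $\mathcal{F}_{l-1}$; once that is clean, the orthogonality of the charges makes the variance bound essentially automatic and the rest is routine maximal-inequality bookkeeping.
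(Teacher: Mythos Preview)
Your proposal is correct and matches the paper's proof essentially step for step: martingale structure for $M_k$, Doob's $L^2$ maximal inequality combined with the variance bound $\EE[M_k^2]\le\sum_{l=2}^k\sum_{i\ge l}\PP(S_i=0)$, Borel--Cantelli along a geometric subsequence, and sandwiching. The only cosmetic difference is your filtration: you enlarge to include all of $(X(j))_{j\in\ZZ}$, which makes $R_l\in\mathcal{F}_{l-1}$ immediate, whereas the paper uses the smaller filtration $\sigma((q(j),X(j)),\,j\le k)$ and relies on the independence of $q(l)$ from $\sigma(\mathcal{F}_{l-1},R_l)$; both work.
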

\begin{proof}
To start, note that
\begin{equation}
\sum_{l=2}^k(H(\xi_l) - H_l(\xi_l))
= \sum_{l=2}^k q(l) \sum_{i=l}^{\infty} q(l-i) \left(\mathbf{1}_{\{\Sigma_{-i}^0(\mathcal{X}_l)=0\}} - \mathbb{P}(S_i = 0) \right)
\end{equation}
is a martingale with respect to the filtration $(\mathcal{F}_k)_{k\geq 1}$ where $\mathcal{F}_k = \sigma( (q(l),X(l)) , l \leq k)$ under $\PP$. By Doob's maximal inequality,
\begin{align}\label{eq:approx2}
\EE \left[\frac{1}{n}\sup_{1 \le k \le n} \left|\sum_{l=2}^k (H_l(\xi_l) - H(\xi_l))\right|^2 \right]
& \le \frac{1}{n} \sum_{l=2}^n \sum_{i=l}^{\infty} \text{Var}\left(\mathbf{1}_{\{\Sigma_{-i}^0(\mathcal{X}_l)=0\}} \right) \nonumber\\
& \le \frac{1}{n} \sum_{l=2}^n \sum_{i=l}^{\infty} \PP(S_i=0) \nonumber\\
& \le C n^{-1/2}.
\end{align}
The last step follows from the bound $\PP(S_n = 0) \le C n^{-d/2}$  (see e.g.\ \cite{JaPr}, Lemma 1). 
Therefore, by the Borel-Cantelli lemma, \eqref{eq:approx} holds with the sequence $2^n$ in place of $nT$.
The result is passed to the original sequence by considering, for each $n$, $k_n$ such that $2^{k_n-1} \le nT < 2^{k_n}$.
\end{proof}

Because of Lemma~\ref{lem:approx}, the theorem will follow
once we show the same statement for the additive functional
\begin{equation}
\mathbb{H}_n := \sum_{l=2}^n H(\xi_l).
\end{equation}

\begin{lem}\label{lem:subdiff}
\begin{equation}
\sup_{n \ge 2} \mathbb{E} \left[ \mathbb{E} \left[\mathbb{H}_n \; \middle| \xi_0 \; \right]^2 \right] < \infty.
\end{equation}
\end{lem}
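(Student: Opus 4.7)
The plan is to compute $\EE[H(\xi_l)\mid\xi_0]$ explicitly, exploit two levels of orthogonality coming from the i.i.d.\ charges, and close the estimate with a local CLT bound on $p_n(x):=\PP(S_n=x)$.

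Conditioning on $\xi_0$ fixes the entire charge sequence $(q(k))_{k\in\ZZ}$ together with the past increments $(X(k))_{k\le 0}$, while the future $(X(k))_{k\ge 1}$ remains random. Splitting the sum defining $H(\xi_l)$ according to whether $i\le l$ or $i>l$, I would observe that for $i\le l$ the quantity $\Sigma_{-i}^{0}(\mathcal{X}_l)=X(l-i+1)+\cdots +X(l)$ involves only future increments and has law $S_i$ independent of $\xi_0$, so $\PP(\Sigma_{-i}^{0}(\mathcal{X}_l)=0\mid \xi_0)=\PP(S_i=0)$ and the corresponding contribution cancels. Writing $i=l+j$ for $j\ge 1$, the remaining increments split as $\Sigma_{-i}^{0}(\mathcal{X}_l)=Z_j+Y_l$, where $Z_j:=X(1-j)+\cdots +X(0)$ is $\xi_0$-measurable and $Y_l:=X(1)+\cdots +X(l)$ is independent of $\xi_0$ with law $S_l$. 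This yields
\begin{equation*}
\EE\bigl[H(\xi_l)\mid\xi_0\bigr]\;=\;q(l)\sum_{j=1}^{\infty}q(-j)\bigl[p_l(-Z_j)-p_{l+j}(0)\bigr].
\end{equation*}

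Next, set $A_{j,l}:=p_l(-Z_j)-p_{l+j}(0)$ and $U_l:=\sum_{j\ge 1}q(-j)A_{j,l}$. Since the charges $(q(l))_{l\ge 2}$ are centered with variance $1$ and independent of both $(q(-j))_{j\ge 1}$ and $(X(k))_{k\le 0}$, the cross-terms $\EE[q(l)q(l')U_lU_{l'}]$ factor and vanish for $l\ne l'$; a further orthogonality among $(q(-j))_{j\ge 1}$, which are independent of every $A_{j,l}$, gives
\begin{equation*}
\EE\bigl[\EE[\mathbb{H}_n\mid\xi_0]^2\bigr]\;=\;\sum_{l=2}^{n}\EE[U_l^{2}]\;=\;\sum_{l=2}^{n}\sum_{j=1}^{\infty}\EE\bigl[A_{j,l}^{\,2}\bigr].
\end{equation*}

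To finish, note that $Z_j$ has the same law as $S_j$, so the convolution identity gives $\EE[p_l(-Z_j)]=p_{l+j}(0)$, hence $A_{j,l}$ is centered and
\begin{equation*}
\EE\bigl[A_{j,l}^{\,2}\bigr]\;\le\;\EE\bigl[p_l(-Z_j)^{2}\bigr]\;\le\;\Bigl(\sup_{x}p_l(x)\Bigr)\,p_{l+j}(0).
\end{equation*}
Under (A1) the local CLT supplies $\sup_x p_n(x)\le Cn^{-d/2}$ and $p_n(0)\le Cn^{-d/2}$, so
\begin{equation*}
\sum_{l\ge 2}\sum_{j\ge 1}\EE\bigl[A_{j,l}^{\,2}\bigr]\;\le\;C\sum_{l\ge 2}l^{-d/2}\sum_{j\ge 1}(l+j)^{-d/2}\;\le\;C\sum_{l\ge 2}l^{\,1-d},
\end{equation*}
which is finite precisely when $d\ge 3$. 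The only subtle point is the first step: recognising the exact cancellation at $i\le l$ that removes the a~priori divergent bulk of $H(\xi_l)$ from $\EE[\,\cdot\,|\xi_0]$. Once that is in place, the orthogonality collapse and the heat-kernel estimate are routine, and the same scheme — with a general transient heat-kernel bound replacing $n^{-d/2}$ and summed against the weights $\rho^{-n}$ — also yields the extension stated in the remark.
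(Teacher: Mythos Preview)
Your proof is correct and follows essentially the same route as the paper: the key cancellation for $i\le l$, the double orthogonality over the charges $q(l)$ and $q(-j)$, and a transience bound on the remaining variance term $\EE[p_l(-Z_j)^2]$ are all identical. The only cosmetic difference is in the final step, where the paper rewrites $\EE[p_l(-Z_j)^2]$ as a three-walk coincidence probability $\PP^{\otimes 3}\bigl(S_l^{(1)}=S_l^{(2)}=-S_j^{(3)}\bigr)$ and bounds it via the Green's function $\EE[N_\infty(0)]\sum_l\PP^{\otimes 2}(S_l^{(1)}=S_l^{(2)})$, whereas you apply the local-CLT bound $\sup_x p_l(x)\le Cl^{-d/2}$ directly to reach the same conclusion.
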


\begin{proof}
We have
\begin{equation}
\mathbb{E} \left[ H(\xi_l) \;\middle|\; \xi_0 \right]
= q_l(0) \sum_{i=1}^{\infty}q_l(-i) \Big\{ \mathbb{P} \left( \Sigma_{-i}^0(\mathcal{X}_l)=0\;\middle|\; (X_j)_{j \le 0}\right) - \mathbb{P} \left(S_i = 0 \right) \Big\}.
\end{equation}
Since $\mathbb{P} \left( \Sigma_{-i}^0(\mathcal{X}_l)=0\;\middle|\; (X_j)_{j \le 0}\right) = \mathbb{P} \left(S_i = 0 \right)$
if $i \le l$, we have
\begin{align}\label{estim0}
\mathbb{E} \left[ \mathbb{E} \left[\mathbb{H}_n \; \middle| \xi_0 \; \right]^2 \right]
& = \sum_{l=2}^n \sum_{i=l+1}^{\infty} \text{Var}\left( \mathbb{P} \left( \Sigma_{-i}^0(\mathcal{X}_l)=0\;\middle|\; (X_j)_{j \le 0}\right) \right) \nonumber\\
& \le \sum_{l=2}^n \sum_{i=l+1}^{\infty} \mathbb{E}\left[ \mathbb{P} \left( \Sigma_{-i}^0(\mathcal{X}_l)=0\;\middle|\; (X_j)_{j \le 0}\right)^2 \right] \nonumber\\
& = \sum_{l=2}^n \sum_{k = 0}^{\infty} \mathbb{P}^{\otimes 3} \left( S^{(1)}_l = S^{(2)}_l = - S^{(3)}_{k+1}\right)
\end{align}
where $S^{(j)}$, $j=1,2,3$ are independent copies of $S$ with joint law $\PP^{\otimes 3}$.
This last line is equal to
\begin{equation}
\sum_{l=2}^n \sum_{x \in \ZZ^d} \mathbb{P}^{\otimes 2} \left( S^{(1)}_l = S^{(2)}_l = x \right) \mathbb{E} \left[ N_{\infty}(-x) \right]
\le \mathbb{E} \left[ N_{\infty}(0) \right] \sum_{l=1}^\infty \mathbb{P}^{\otimes 2} \left( S^{(1)}_l = S^{(2)}_l \right) < \infty
\end{equation}
since  the last sum is the total local time at $0$ of the $d$-dimensional random walk $S^{(1)}_l - S^{(2)}_l$.
\end{proof}

Now the theorem readily follows from Lemmas~\ref{lem:approx}--\ref{lem:subdiff} together with e.g.\ the main theorem in \cite{DeLi}
(note that $\EE[\mathbb{H}_n | \xi_0] = \sum_{k=2}^n P^k H(\xi_0)$ with their notations),
the fact that $(q(k))_{k \ge 1}$ is measurable with respect to $\sigma(\xi_0)$,
and a straightforward calculation of the variance of $\mathbb{H}_n$.
\end{proof}

\appendix
 
\section{Functional CLT under the conditional law given $S$} 
\label{sec:appendixFCLT}
 
In this appendix we prove Theorem~\ref{thm:FCLTgivenS}.

\begin{proof}
We will apply the martingale functional CLT in the Lindeberg-Feller formulation
as in e.g.\ \cite{Du05}, Theorem 7.3 on page 411. We will tacitly use the laws of large numbers
for $I^{[p]}_n$, $p \ge 0$, proven in \cite{Cer07} for $d=2$ and \cite{BeKo09} for $d\ge3$.
Note that the result in \cite{Cer07} is also valid under (A1').

Let us define $K_1 := 0$ and
\begin{align}\label{eq:FCLT1}
\Delta_{n,k} := s_n^{-1}(K_k - K_{k-1}) = s_n^{-1} q_k \; \sum_{i=1}^{k-1} q_i \mathbf{1}_{\{S_i = S_k\}}, \;\; k \ge 2.
\end{align}
Then $\Delta_{n,m}$ is a martingale difference array under $\mathbb{P}(\cdot | S)$ w.r.t.\ the filtration
$\mathcal{F}_{m} := \sigma(q_i, i \le m)$. The corresponding quadratic variations are given by
\begin{equation}
Q_{n,m} :=  \sum_{k=1}^m \EE \left[ \Delta_{n,k}^2 \;\middle|\; S, \mathcal{F}_{k-1} \right].
\end{equation}
According to \cite{Du05},
the proof will be finished once we show that, for all $\epsilon > 0$,
\begin{equation}\label{lind}
\lim_{n \to \infty} \sum_{k=2}^n \mathbb{E} \left[ \Delta_{n,k}^2 \mathbf{1}_{\{|\Delta_{n,k}|> \epsilon\}}\;\middle|\; S \right] = 0
\;\;\; \PP\text{-a.s.}
\end{equation}
and that, for all $t \ge 0$,
\begin{equation}\label{convvar}
\lim_{n \to \infty} Q_{n, \lfloor nt \rfloor} = \sigma^2 t \;\; \text{ in probability under } \PP(\cdot | S) \text{ for } \PP\text{-a.e.\ } S.
\end{equation}
In fact, the theorem in \cite{Du05} concludes convergence for $t \in [0,1]$,
but it this then easy to extend the result to $t \in [0, T]$ with $T \in \NN$ and thus to $t \in [0,\infty)$.

We proceed to verify \eqref{lind}--\eqref{convvar}, starting with the first. 
Write
\begin{align}
\sum_{k=2}^n \mathbb{E} \left[ \Delta_{n,k}^2 \mathbf{1}_{\{|\Delta_{n,k}|> \epsilon \}}\;\middle|\; S \right]
& \le C \sum_{k=2}^n \mathbb{E} \left[ |\Delta_{n,k}|^\gamma \;\middle|\; S \right] \nonumber\\
& = C s_n^{-\gamma} \sum_{k=2}^n \mathbb{E} \left[ \left|\sum_{i=1}^{k-1} q_i \mathbf{1}_{\{S_i = S_{k}\}} \right|^\gamma\;\middle|\; S \right] \nonumber\\
& \le C s_n^{-\gamma} \sum_{k=2}^n \left(\sum_{i=1}^{k-1} \mathbf{1}_{\{S_i = S_{k}\}} \right)^{\gamma/2} \nonumber\\
& = C s_n^{-\gamma} \sum_{x \in \ZZ^d} \sum_{k=2}^n \mathbf{1}_{\{S_{k}=x\}} N_{k-1}^{\gamma/2}(x) \nonumber\\
& \le C s_n^{-\gamma} I_n^{[1+\gamma/2]},
\end{align}
where for the third step we used the Marcienkiewicz-Zygmund and Minkowski inequalities. 
Since $I_n^{[1+\gamma/2]}/s_n^\gamma$ goes to $0$ a.s.\ as $n \to \infty$, \eqref{lind} follows.

Let us now verify \eqref{convvar}. Write
\begin{equation}\label{eq:decompQ}
Q_{n,m} = s_n^{-2} \left( \frac{I_m - m}{2} + R^{(1)}_m + R^{(2)}_m \right),
\end{equation}
where
\begin{equation}
\begin{aligned}
R^{(1)}_m := & \sum_{k=2}^{m} \sum_{i=1}^{k-1}(q_i^2-1) \mathbf{1}_{\{S_i = S_k\}} \\
= & \sum_{i=1}^{m-1} (q_i^2-1) N_{i,m}(S_i)
\end{aligned}
\end{equation}
with $N_{a,b}(x) := \sum_{k=a+1}^b \mathbf{1}_{\{S_k=x\}}$, and
\begin{equation}
\begin{aligned}\label{eq:FCLT2}
R^{(2)}_m := & \; 2 \sum_{k=2}^{m} \sum_{1 \le i < j \le k-1} q_i q_j \mathbf{1}_{\{S_i = S_j = S_k\}}\\
= & \; 2 \sum_{x \in \ZZ^d}\sum_{1 \le i < j \le m-1} q_i q_j \mathbf{1}_{\{S_i=S_j = x\}} N_{j,m}(x).
\end{aligned}
\end{equation}
Since $(I_{\lfloor nt \rfloor}-\lfloor nt \rfloor)/2 s_n^2 \to \sigma^2 t$ a.s.,
we only need to show that the remaining terms in \eqref{eq:decompQ} converge to $0$.
Note that in dimension $d\geq 3$, $\sigma^2$ can be written as
$$\sigma^2= \frac{1}{2}\left(\sum_{j=1}^{\infty}j^2 \chi^2 (1-\chi)^{j-1}-1\right)$$
where $\chi := \mathbb{P} \left( S_n \neq 0 \; \forall \; n \ge 1\right)$.

Let us first deal with $R^{(2)}_m$. Note that, under $\mathbb{P}(\cdot |S)$, the summands in the r.h.s.\ of \eqref{eq:FCLT2} are independent and centered for different $x$ to write
\begin{align}
\mathbb{E} \left[ (R^{(2)}_m)^2\;\middle|\; S \right]
& = 4 \sum_{x \in \ZZ^d} \sum_{1 \le i < j \le m-1} \mathbf{1}_{\{S_i=S_j = x\}} N^2_{j,m}(x) \nonumber\\\
& \le C \sum_{x \in \ZZ^d} N^{4}_m(x) = C I_m^{[4]}
\end{align}
and conclude that $R^{(2)}_{\lfloor nt \rfloor} /s_n^2$ goes to $0$ in probability under $\mathbb{P}(\cdot | S)$.
To control $R^{(1)}_m$, we split into two cases. If $\gamma\geq 4$, then reasoning as before we get 
$$\mathbb{E} \left[ (R^{(1)}_m)^2\;\middle|\; S \right]
 \le  C I_m^{[3]}$$
 and we conclude as for $R^{(2)}_m$.
If $\gamma<4$, we use Theorem 1(c) in \cite{HaWr71}.
Note that
\begin{align}
\sum_{i=1}^{m-1} N^{\gamma/2}_{i,m}(S_i)
& = \sum_{x \in \ZZ^d} \sum_{i=1}^{m-1} \mathbf{1}_{\{S_i = x \}}N^{\gamma/2}_{i,m}(x) \nonumber\\
& \le \sum_{x \in \ZZ^d} N^{1+\gamma/2}_m(x) = I^{[1+\gamma/2]}_m,
\end{align}
and also 
\begin{equation}
\mathbb{P}\left( |q^2-1| \ge u \right) \le C u^{-\gamma/2} \;\;\; \forall \; u > 0.
\end{equation}
Letting 
\begin{equation}
a_{n,i} := \left\{ \begin{array}{ll}
N_{i,\lfloor nt \rfloor}(S_i)/s_n^2 & \text{ if } i \le \lfloor nt \rfloor,\\
0 & \text{ otherwise,}
\end{array}\right.
\end{equation}
we obtain from the aforementioned theorem that, for some constant $C>0$,
\begin{equation}
\mathbb{P} \left( |R^{(1)}_{\lfloor nt \rfloor}| > \epsilon s_n^2  \;\middle|\; S \right) \le C I^{[1 + \gamma/2]}_{\lfloor nt \rfloor}/s_n^{\gamma},
\end{equation}
which goes to $0$ as $n \to \infty$. 
\end{proof}

\end{document}